\newtheorem{thm}{Theorem}[section]
\newtheorem*{vdw-thm}{Van der Waerden Theorem}
\newtheorem*{tmr-thm}{Topological Multiple Recurrence Theorem}
\newtheorem*{tmr-thm-ii}{Topological Multiple Recurrence Theorem II}
\newtheorem*{sz-thm}{Szemer\'{e}di Theorem}
\newtheorem*{mr-thm}{Multiple Recurrence Theorem}
\newtheorem*{cs-thm}{Central Sets Theorem}
\newtheorem{cor}[thm]{Corollary}
\newtheorem{lem}[thm]{Lemma}
\newtheorem{prop}[thm]{Proposition}
\newtheorem{ques}[thm]{Question}
\theoremstyle{definition}
\newtheorem{defn}[thm]{Definition}
\newtheorem{rem}[thm]{Remark}
\newtheorem{exmp}[thm]{Example}
\numberwithin{equation}{section}
\DeclareMathOperator{\Orb}{Orb}
\DeclareMathOperator{\diam}{diam}
\DeclareMathOperator{\Tran}{Tran}
\DeclareMathOperator{\FS}{FS}
\DeclareMathOperator{\supp}{supp}
\newcommand{\N}{\mathbb{N}}
\newcommand{\AP}{\mathcal{AP}}
\newcommand{\F}{\mathcal{F}}
\newcommand{\Zp}{{\mathbb{Z}_+}}
\newcommand{\Z}{\mathbb{Z}}
\newcommand{\eps}{\varepsilon}
\newcommand{\set}[1]{\{#1\}}
\begin{document}
\title{Multi-recurrence and van der Waerden systems}
\author{Dominik Kwietniak}
\address[D. Kwietniak]{Institute of Mathematics, Jagiellonian University in Krak\'{o}w, ul. {\L}ojasiewicza 6, 30-348 Krak\'{o}w, Poland}
   \email{dominik.kwietniak@gmail.com}
\author{Jian Li}
\address[J. Li]{Department of Mathematics, Shantou University, Shantou, Guangdong 515063, P.R. China}
\thanks{Corresponding author: Jian Li (lijian09@mail.ustc.edu.cn)}
\email{lijian09@mail.ustc.edu.cn}
\author{Piotr Oprocha}
\address[P. Oprocha]{AGH University of Science and Technology, Faculty of Applied
	Mathematics, al.
	Mickiewicza 30, 30-059 Krak\'ow, Poland
	-- and --
	National Supercomputing Centre IT4Innovations, Division of the University of Ostrava,
	Institute for Research and Applications of Fuzzy Modeling,
	30. dubna 22, 70103 Ostrava,
	Czech Republic}
\email{oprocha@agh.edu.pl}
\author{Xiangdong Ye}
\address[X. Ye]{Wu Wen-Tsun Key Laboratory of Mathematics, USTC, Chinese Academy of Sciences,
Department of Mathematics, University of Science and Technology of China,
Hefei, Anhui, 230026, P.R. China}
\email{yexd@ustc.edu.cn}
\subjclass[2010]{37B20, 37B05, 37A25, 05D10}
\keywords{Multi-recurrent points, van der Waerden systems,  Multiple Recurrence Theorem,
multiple IP-recurrent property, multi-non-wandering points}

\begin{abstract} We explore recurrence properties arising from dynamical approach
to the van der Waerden Theorem and similar combinatorial problems. We describe relations between
these properties and study their consequences for dynamics. In particular, we present a measure-theoretical analog of a result of
Glasner on multi-transitivity of topologically weakly mixing minimal maps. We also obtain a dynamical proof of
the existence of a $C$-set with zero Banach density.
\end{abstract}
\date{\today}
\maketitle

\section{Introduction}

We study multiple-recurrence properties of dynamical systems on comp\-act met\-ric spac\-es. We use topological dynamics to characterize selected classes of subsets of $\N$ (e.g. IP-sets, C-sets, etc.) and to gain a better understanding of some
classes of transitive systems. The idea goes back to the work of Furstenberg in the 1970s.

Our starting point is the following result published in \cite{W27}.
\begin{vdw-thm}   \label{thm:APset}
If $\N$ is partitioned into finitely many 
subsets, then
one of these sets contains arithmetic progressions of arbitrary finite
length.
\end{vdw-thm}

In 1978, Furstenberg and Weiss \cite{FW78} obtained a dynamical proof of the van der Waerden Theorem.
They proved the Topological Multiple Recurrence Theorem and showed that it is equivalent to the van der Waerden Theorem.
``Equivalent'' means here that any of these results may be proved by assuming the other is true.

\begin{tmr-thm}
Let $(X,T)$ be a compact dynamical system. 
Then there exists a point $x\in X$ such that for any $d\in\mathbb{N}$ there is a strictly increasing sequence
$\{n_k\}_{k=1}^\infty$ in $\N$ with
$T^{i n_k} x\to x$ as $k\to\infty$ for every $i=1,2,\dotsc,d$.
\end{tmr-thm}

We call a point $x\in X$ fulfilling the conclusion of the topological multiple recurrence theorem
a \emph{multi-recurrent point}. In Section 3 we show that the set  of all multi-recurrent points is a $G_\delta$ subset of $X$; it is a
residual set if $(X,T)$ is minimal; and when $(X,T)$ is distal or uniformly rigid, then every point is multi-recurrent.
We also provide an example of a substitution subshift with minimal points which are not multi-recurrent. Then we prove
that multi-recurrent points can be lifted through a distal extension but this does not need to hold for a proximal extension
(we strongly believe that it can not be lifted by weakly mixing extension, but we do not have an example at this moment).
Using ergodic theory we show that the collection of multi-recurrent points which return to any of their
neighborhoods with positive upper density
has full measure for every invariant measure. If the invariant measure is weakly mixing and fully
supported then for almost every $x\in X$ and every $d\geq 1$ the diagonal $d$-tuple $(x,x,\ldots, x)$
has a dense orbit under the action of $T\times T^2\times\dotsb\times T^d$, which can be viewed as
a measure-theoretical version of a result of Glasner on topological weakly mixing minimal maps \cite{G94}.

Let us mention another equivalent version of the Topological Multiple Recurrence Theorem which
shows the relationship between these results and Furstenberg's Multiple Recurrence Theorem
for measure preserving systems (the so-called ``ergodic
Szemer\'{e}di Theorem''). It also comes from \cite[Theorem 1.5]{FW78}.
For a short and elegant proof see \cite[Theorem 1.56]{G}.

\begin{tmr-thm-ii}  \label{thm:1.3}
If a dynamical system $(X,T)$ is minimal, then for any $d\in\mathbb{N}$ and any non-empty open subset $U$ of $X$,
there exists a positive integer $n\geq 1$ with
\[U\cap T^{-n}U\cap T^{-2n}U\cap \dotsb\cap T^{-dn}U\neq\emptyset.\]
\end{tmr-thm-ii}

Inspired by this result, we introduce a new class of dynamical systems, which we call \emph{van der Waerden systems}, that is systems $(X,T)$ such that for every non-empty open subset $U$ of $X$
and for every $d\in\N$ there exists an $n\in\N$ such that
\[U\cap T^{-n}U\cap T^{-2n}U\cap \dotsb \cap T^{-dn}U\neq\emptyset\]
and we will study their basic properties in Section 4.
By the second variant of Topological Multiple Recurrence Theorem every minimal system is a van der Waerden system
and it is also not hard to see that $(X,T)$ is a  van der Waerden system
if and only if its multi-recurrent points are dense in $X$.

A generalization of van der Waerden Theorem is Szemer\'edi's Theorem \cite{S75}, proved in 1975.
\begin{sz-thm}
If $F\subset\N$ has positive upper density, then it contains arithmetic progressions of arbitrary finite
length.
\end{sz-thm}
Two years later, in 1977,  Furstenberg presented a new proof of
Szemer\'edi Theorem using dynamical systems approach. Furstenberg's proof is based on the equivalence of Szemer{\'e}di Theorem and the following Multiple Recurrence Theorem (see \cite{F77}).

\begin{mr-thm}  \label{thm:F-MRT}
If $(X,\mathcal{B},\mu)$ is a probability space and
$T$ is a measure preserving transformation of $(X,\mathcal{B},\mu)$,
then for any $d\in\N$ and any set $A\in\mathcal{B}$ with $\mu(A)>0$, there exists an integers $n\geq 1$ with
\[\mu(A\cap T^{-n}A\cap \dotsb T^{-2n}A\cap \dotsb\cap T^{-dn}A)>0.\]
\end{mr-thm}

It follows that every compact dynamical system with a fully supported invariant measure is a
van der Waerden system. We examine whether the converse is true. It turns out that there exists a topologically
strongly mixing system which is a van der Waerden system, but the only
invariant measure is a point mass on a fixed point, see Remark \ref{rem:5.6}. We also provide an example of a strongly
mixing system which is not a van der Waerden system.

While we were preparing this paper we found a work of Host et al. \cite{Kra15} which studies closely related problems,
but from a different point of view which emphasises the connection between recurrence properties and associated \emph{sets of (multiple) recurrence} (see
\cite[Definitions 2.1 \& 2.9]{Kra15}). Here we focus on recurrence of a single point in a concrete dynamical systems, and this complements the approach of \cite{Kra15}.


Our study of van der Waerden systems leads naturally to $\AP$-recurrent points.
We say that a point $x$ is \textit{$\AP$-recurrent} if for every neighborhood $U$ of $x$ the set of return times of $x$ to $U$ contains  
arithmetic progressions of arbitrary finite length.
It is clear that every multi-recurrent point is $\AP$-recurrent, but the converse is not true.
It is a consequence of the following characterization: a point is $\AP$-recurrent if and only if the closure of its orbit is
a van der Waerden system. A nice property of $\AP$-recurrent points is that they can be lifted through factor maps.

In~\cite{F81}, Furstenberg defined central subsets of $\N$ in terms
of some notions from topological dynamics. He showed that any finite
partition of $\N$ must contain a central set in one of its cells and proved the following Central
Sets Theorem~\cite[Proposition 8.21]{F81}.

\begin{cs-thm}
Let $C$ be a central set of $\N$.
Let $d\in\N$ and for each $i\in\{1,2,\dotsc,d\}$, let $\{p_n^{(i)}\}_{n=1}^\infty$ be a sequence in $\Z$.
Then there exist a sequence  $\{a_n\}_{n=1}^\infty$ in $\N$ and a sequence $\{H_n\}_{n=1}^\infty$ of finite subsets of $\N$
such that
\begin{enumerate}
  \item for every $n\in\N$, $\max H_n<\min H_{n+1}$ and
  \item for every  finite subset $F$ of $\N$ and every $i\in\{1,2,\dotsc,d\}$,
  \[\sum_{n\in F}\Bigl(a_n+\sum_{j\in H_n} p_j^{(i)}\Bigr)\in C.\]
\end{enumerate}
\end{cs-thm}
Central Sets Theorem has very strong combinatorial consequences, such as Rado's Theorem \cite{R33}.
The authors in~\cite{DHS08} proved a stronger version of the Central Sets Theorem
valid for an arbitrary semigroup $S$ and proposed to call a subset of $S$ a C-set
if it satisfies the conclusion of this version of the Central Sets Theorem.
A dynamical characterization of C-sets was obtained in~\cite{Li2012} by introducing a class of
dynamical systems satisfying the multiple IP-recurrence property. Note that C-sets considered in~\cite{Li2012} are subsets of $\Z$, however
Neil Hindman pointed out to the second author of this paper
that a similar characterization also holds for C-sets in $\N$.
\footnote{See also the review of \cite{Li2012} by N. Hindman in MathSciNet, MR2890544.}
A dynamical characterization of C-sets in an arbitrary semigroup $S$ is provided in \cite{J11} .

We study the multiple IP-recurrence property in
Section~5. We show that every transitive system with the multiple IP-recurrence property is either equicontinuous or sensitive.
This result generalizes theorems of Akin, Auslander and Berg \cite{AAB} and Glasner and Weiss \cite{GW93}.
We also provide an example of a strongly mixing system which is a van der Waerden system
but does not have the multiple IP-recurrence property.
We characterize bounded density shifts with the multiple IP-recurrent property. Combining this result
with the dynamical characterization of $C$-sets we obtain a dynamical proof of the main result of \cite{H09}:
there is a $C$-set in $\N$ with zero Banach density.

As seen above, the notion of a multi-recurrent point, which is parallel to the notion of a recurrent point
provides some insight to the theory of dynamical systems.
In the same spirit we define the notion of a multi-non-wandering point parallel to the classical
notion of a non-wandering point.
In section 6, we study the relations between multi-non-wandering points and the
sets containing arithmetic progressions of arbitrary finite length.
In particular, we provide a link between multi-non-wandering sets and $\AP$-recurrence.

By what we said above, it is easy to see that a transitive van der Waerden system
can be viewed as a generalization of an $E$-system (transitive system with a full supported invariant measure).
In a transitive van der Waerden system
each transitive point is $\AP$-recurrent,
and the set of multi-recurrent points is dense.
Note that for an $E$-system, the return time set of a transitive point to its neighborhood
has positive upper Banach density and at the same time,
the set of recurrent points with positive lower density of return time sets is dense.
For an $M$-system (transitive system with a dense set of minimal points),
this can be explained using piecewise syndetic sets and syndetic sets.

\section{Preliminaries}

In this section, we present basic notations, definitions and results.
\subsection{Subsets of positive integers}
Denote by $\mathbb{N}$ ($\Zp$ and $\Z$, respectively) the set of all positive integers
(nonnegative integers and integers, respectively).

A \emph{Furstenberg family} or simply a \emph{family} on $\N$ is any collection $\F$ of subsets of $\N$
which is hereditary upwards, i.e.
if $A\in \F$ and $A\subset B\subset \N$ then $B\in \F$. A \emph{dual family} for $\F$, denoted by $\F^*$, consists of
sets that meet every element of $\F$, i.e. $A\in \F^*$ provided that $\N\setminus A\not\in \F$. Clearly, $\F^{**}=\F$.

Given a sequence $\{p_i\}_{i=1}^\infty$ in $\N$, define the set of finite sums of $\{p_i\}_{i=1}^\infty$ as
\[\FS\{p_i\}_{i=1}^\infty =\Bigl\{\sum_{i\in\alpha} p_i\colon \alpha\text{ is a non-empty finite subset of }\N\Bigr\}.\]
We say that a subset $F$ of $\N$ is
\begin{enumerate}
\item an \emph{IP-set} if there exists a sequence $\{p_i\}_{i=1}^\infty\subset \N$
such that $FS\{p_i\}_{i=1}^\infty\subset F$;
\item an \emph{AP-set} if it contains arbitrarily long arithmetic progressions,
that is, for every $d\geq 1$, there are $a,n\in\N$ such that $\{a,a+n,\dotsc,a+dn\}\subset F$. The family of all AP-sets is denoted by $\AP$;
\item \emph{thick} if it contains arbitrarily long blocks of consecutive integers, that is,
for every $d\geq 1$ there is $n\in\N$ such that $\{n,n+1,\dotsc,n+d\}\subset F$;
\item \emph{syndetic} if it has bounded gaps, that is, for some $N\in\N$ and every $k\in\N$ we have
$\{k,k+1,\dotsc,k+N\}\cap F\neq\emptyset$;
\item \emph{co-finite} it it has finite complement, i.e. $\N\setminus F$ is finite.
\item an \emph{IP$^*$-set} (\emph{AP$^*$-set}, respectively) if it has non-empty intersection with every IP-set
(AP-set, respectively), that is it belongs to an appropriate dual family.
\end{enumerate}

It is easy to see that a subset $F$ of $\N$ is syndetic
if and only if it has non-empty intersection with every thick set, i.e. is in the family dual to all thick sets.
Every thick set is an IP-set, hence every IP$^*$-set is syndetic.

A family $\F$ has the \emph{Ramsey property}
if $F\in\F$ and $F=F_1\cup F_2$ imply that $F_i\in \F$ for some $i\in\{1,2\}$.
It is not hard to see that the van der Waerden theorem is equivalent to the fact that
 the  family $\AP$ has the Ramsey property.

Let $F$ be a subset of $\mathbb{Z}_+$. Define the \emph{upper density} $\overline{d}(F)$ of $F$ by
\[ \overline{d}(F)=\limsup_{n\to\infty} \frac{\#(F\cap[0,n-1])}{n},\]
where $\#(\cdot)$ is the number of elements of a set.
Similarly, $\underline{d}(F)$, the \emph{lower density} of $F$, is defined by
\[ \underline{d}(F)=\liminf_{n\to\infty} \frac{\#(F\cap[0,n-1])}{n}.\]
The \emph{upper Banach density} $BD^*(F)$ and  \emph{lower Banach density} $BD_*(F)$ are defined by
\[
BD^*(F)=\limsup_{N-M\to\infty} \frac{\#(F\cap[M,N])}{N-M+1}, \quad BD_*(F)
=\liminf_{N-M\to\infty} \frac{\#(F\cap[M,N])}{N-M+1}.
\]

\subsection{Topological dynamics}
By a \emph{(topological) dynamical system} we mean a pair $(X,T)$
consisting of a compact metric space $(X,\rho)$ and a continuous map $T\colon X\to X$.
If $X$ is a singleton, then we say that $(X,T)$ is \emph{trivial}.
If $K\subset X$ is a nonempty closed subset satisfying $T(K)\subset K$,
then we say that $(K,T)$ is a \emph{subsystem} of $(X,T)$ and $(X,T)$ is \emph{minimal} if it has no proper subsystems.
The \emph{(positive) orbit of $x$ under $T$} is the set $\Orb(x,T)=\{T^n x\colon n\in\Zp\}$.
Clearly, $\bigl(\overline{\Orb(x,T)},T\bigr)$ is a subsystem of $(X,T)$ and $(X,T)$ is minimal if $\overline{\Orb(x,T)}=X$
for every $x\in X$.

We say that a point $x\in X$ is
\begin{enumerate}
\item \emph{minimal}, if $x$ belongs to some minimal subsystem of $(X,T)$;
\item \emph{recurrent}, if $\liminf_{n\to \infty}\rho(T^{n}x,x)=0$;
\item \emph{transitive}, if $\overline{\Orb(x,T)}=X$.
\end{enumerate}
For a point $x\in X$ and subsets $U,V\subset X$,
we define the following sets of \emph{transfer times}:
\begin{eqnarray*}
N(U,V)&=&\{n\in\N\colon T^nU\cap V\neq\emptyset\}=\{n\in\N\colon U\cap T^{-n}V\neq\emptyset\},\\
N(x,U)&=&\{n\in\N\colon T^nx\in U\}.
\end{eqnarray*}
To emphasize that we are calculating the above sets using transformation $T$
we will sometimes write $N_T(x,U)$ and $N_T(U,V)$.

We say that a dynamical system $(X,T)$ is
\begin{enumerate}
  \item \emph{transitive} if $N(U,V)\neq \emptyset$ for every
two non-empty open subsets $U$ and $V$ of $X$;
\item \emph{totally transitive} if $(X,T^n)$ is transitive for every $n\in\mathbb{N}$;
\item \emph{(topologically) weakly mixing} if the product system $(X\times X,T\times T)$ is transitive;
\item \emph{(topologically) strongly mixing} if for every
two non-empty open subsets $U$ and $V$ of $X$, the set of transfer times $N(U,V)$ is cofinite.
\end{enumerate}

Denote by  $\Tran(X,T)$
the set of all transitive points of $(X,T)$.
It is easy to see that if a dynamical system $(X,T)$ is transitive then
$\Tran(X,T)$ is a dense $G_\delta$ subset of $X$.
It is also clear that a dynamical system $(X,T)$ is minimal if and only if $\Tran(X,T)=X$,
and a point $x\in X$ is minimal if and only if $\bigl(\overline{\Orb(x,T)},T\bigr)$ is a minimal system.

The following characterizations of recurrent points and minimal points are well-known (see, e.g., \cite{F81}).
\begin{lem}
Let $(X,T)$ be a dynamical system. A point $x\in X$ is
\begin{enumerate}
  \item recurrent if and only if for every open neighborhood $U$ of $x$
  the set $N(x,U)$ contains an IP-set;
  \item minimal if and only if for every open neighborhood $U$ of $x$
  the set $N(x,U)$ is syndetic.
\end{enumerate}
\end{lem}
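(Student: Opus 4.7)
The plan is to prove (1) and (2) separately, each by a forward and a backward direction, using only recurrence, compactness, and uniform continuity.

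For (1), the backward direction is essentially definitional: any IP-set generated by $\{p_i\}$ contains the strictly increasing sequence of partial sums $p_1<p_1+p_2<p_1+p_2+p_3<\dotsb$, so if $N(x,U)$ contains an IP-set for every open neighborhood $U$ of $x$, then it contains points arbitrarily far out, and shrinking $U$ to $x$ forces $\liminf_{n\to\infty}\rho(T^nx,x)=0$. The forward direction requires more work. Given a recurrent $x$ and neighborhood $U$, I would choose nested open neighborhoods $U\supset V_1\supset V_2\supset\dotsb$ of $x$ with $\diam(V_k)\to 0$ and construct $p_1<p_2<\dotsb$ inductively so that $\FS\{p_i\}_{i=1}^\infty\subset N(x,U)$. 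At step $n+1$, having fixed $p_1,\dotsc,p_n$, I would use continuity of each $T^{\sum_{i\in\alpha}p_i}$ over non-empty $\alpha\subset\{1,\dotsc,n\}$ to select a neighborhood $W_{n+1}\subset V_{n+1}$ of $x$ on which all of these finitely many maps land inside $U$; then recurrence produces $p_{n+1}>p_n$ with $T^{p_{n+1}}x\in W_{n+1}$. The verification that every non-empty finite sum lands in $U$ is the usual split according to whether $n+1$ belongs to the index set, and the main (mild) obstacle is keeping the bookkeeping clean.

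For (2), the forward direction is a compactness argument. Suppose $N(x,U)$ is not syndetic for some open neighborhood $U$ of $x$. Then there are integers $k_n$ and lengths $N_n\to\infty$ with $\{k_n,k_n+1,\dotsc,k_n+N_n\}\cap N(x,U)=\emptyset$. Passing to a subsequence, $T^{k_n}x\to y\in\overline{\Orb(x,T)}$, and for each fixed $j\geq 0$ we have $T^{k_n+j}x\in X\setminus U$ eventually, so $T^jy\in X\setminus U$. Thus $\overline{\Orb(y,T)}$ is contained in the proper closed invariant set $\overline{\Orb(x,T)}\setminus U$ (which does not contain $x$), contradicting minimality of $\overline{\Orb(x,T)}$.

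For the backward direction of (2), I would argue contrapositively: assume $\overline{\Orb(x,T)}$ is not minimal, so by Zorn's lemma it contains a proper minimal subsystem $M$ with $x\notin M$. Pick $\eps>0$ such that the open ball $U=B(x,\eps/2)$ satisfies $\overline{U}\cap M=\emptyset$, and let $V=X\setminus\overline{U}$, an open neighborhood of $M$. Choose any $y\in M$; since $M$ is minimal with $y\in M\subset V$, the set $N(y,V)$ is thick (this is itself a short consequence of minimality applied to $y$, or one can directly invoke the fact that $M$ is uniformly recurrent in $V$). By uniform continuity, there is a neighborhood $W$ of $y$ such that for each $k$ in a prescribed long block $\{m,m+1,\dotsc,m+\ell\}\subset N(y,V)$, the image $T^k(W)\subset V$. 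Because $y\in\overline{\Orb(x,T)}$, infinitely many iterates $T^{n_j}x$ lie in $W$, and then $T^{n_j+k}x\in V$ for $k=m,\dotsc,m+\ell$, producing a gap of length $\ell+1$ in $N(x,U)$. Letting $\ell\to\infty$ shows $N(x,U)$ is not syndetic. The key obstacle is ensuring that $y$'s syndetic return property transfers to $x$ via the approximating iterates, which is precisely the role of uniform continuity on the compact space $X$.
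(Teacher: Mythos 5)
Your proof is correct, but note that the paper does not actually prove this lemma: it is stated as a well-known fact with a pointer to Furstenberg's book \cite{F81}, so there is no in-paper argument to compare against. What you have written is the standard self-contained proof of both characterizations, and all four directions go through. Two small remarks on economy rather than correctness. First, in the backward direction of (2), once you have the minimal set $M\subset V$ with $x\notin M$, the set $N(y,V)$ for $y\in M$ is not merely thick --- it is all of $\N$, simply because $M$ is $T$-invariant and contained in $V$; no appeal to minimality or uniform recurrence of $y$ is needed there (minimality of $M$ is only used to guarantee $M$ exists via Zorn and is disjoint from a neighborhood of $x$, and in fact any nonempty closed invariant proper subset avoiding $x$ would do). Second, the appeals to uniform continuity in (2) can be replaced by plain continuity of the finitely many maps $T^m,\dotsc,T^{m+\ell}$ at the single point $y$, since only finitely many iterates are controlled at a time. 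In the forward direction of (1), the one point worth making explicit is that the inductive hypothesis $T^{\sum_{i\in\alpha}p_i}x\in U$ for all non-empty $\alpha\subset\{1,\dotsc,n\}$ is exactly what licenses the choice of the open set $W_{n+1}$, and that the newly verified sums for $\alpha\subset\{1,\dotsc,n+1\}$ close the induction; your sketch has this right.
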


A dynamical system $(X,T)$ is \emph{equicontinuous}
if for every $\varepsilon>0$ there is a $\delta>0$ such that
if $x,y\in X$ with $\rho(x,y)<\delta$ then
$\rho(T^nx,T^ny)<\varepsilon$ for $n=0,1,2,\dotsc$. A point $x\in X$ is \emph{equicontinuous}
if for every $\varepsilon>0$ there is a $\delta>0$ such that for every $y\in X$ with $\rho(x,y)<\delta$,
$\rho(T^nx,T^ny)<\varepsilon$ for all $n\in\Zp$.
By compactness, $(X,T)$ is equicontinuous if and only if every point in $X$ is equicontinuous.

We say that a dynamical system $(X,T)$ has \emph{sensitive dependence on initial condition} or
briefly $(X,T)$ is \emph{sensitive}
if there exists a $\delta>0$ such that for every $x\in X$ and every neighborhood $U$ of $x$
there exist $y\in U$ and $n\in\N$ such that $\rho(T^nx,T^ny)>\delta$.

A transitive system is \emph{almost equicontinuous} if there is at least one equicontinuous point.
It is known that if $(X,T)$ is almost equicontinuous then the set of equicontinuous points coincides
with the set of all transitive points and additionally $(X,T)$ is \emph{uniformly rigid},
that is for every $\varepsilon>0$ there exists an $n\in\N$ such that $\rho(T^nx,x)<\varepsilon$ for all $x\in X$.
We also have the following dichotomy:
if a dynamical system $(X,T)$ is transitive, then it is either almost equicontinuous or sensitive.
See \cite{AAB,GW93} for proofs and more details.

A pair $(x,y)\in X^2$ is \emph{proximal} if $\liminf_{n\to\infty}\rho(T^nx,T^ny)=0$,
and \emph{distal} if it is not proximal, that is $\liminf_{n\to\infty}\rho(T^nx,T^ny)>0$.
A point $x$ is \emph{distal} if $(x,y)$ is distal for any $y\in \overline{\Orb(x,T)}$ with $y\neq x$.
If every point in $X$ is distal then we say that $(X,T)$ is \emph{distal}.

Let $(X,T)$ and $(Y,S)$ be two dynamical systems. If there is a continuous surjection
$\pi: X \to Y$ with $\pi\circ T = S\circ \pi$,
then we say that $\pi$ is a \emph{factor map}, the system
$(Y,S)$ is a \emph{factor} of $(X,T)$ or $(X, T)$ is an \emph{extension} of $(Y,S)$.

A factor map $\pi: X \to Y$ is:
\begin{enumerate}
\item \emph{proximal} if $(x_1,x_2)\in X^2$ is proximal provided $\pi(x_1)=\pi(x_2)$;
\item \emph{distal} if  $(x_1,x_2)\in X^2$ is distal provided $\pi(x_1)=\pi(x_2)$ with $x_1\neq x_2$;
\item \emph{almost one-to-one} if there exists a residual subset $G$ of $X$ such that
$\pi^{-1}(\pi(x))=\{x\}$ for any $x\in G$.
\end{enumerate}

Let $M(X)$ be the set of Borel probability measures on $X$.
We are interested in those members of $M(X)$ that are invariant measures for $T$. Therefore, denote by $M(X,T)$
the set consisting of all
$\mu\in M(X)$ making $T$ a measure-preserving transformation of $(X,\mathcal{B}(X),\mu)$,
where $\mathcal{B}(X)$ is the Borel $\sigma$-algebra of $X$.
By the Krylov-Bogolyubov Theorem, $M(X,T)$ is nonempty.

The \emph{support} of a measure $\mu\in M(X)$, denoted by $\supp(\mu)$,
is the smallest closed subset $C$ of $X$ such that $\mu(C)=1$.
We say that a measure has \emph{full support} or is \emph{fully supported} if $\supp(\mu)=X$.
We say that $(X, T)$ is an \emph{E-system}
if it is transitive and admits a $T$-invariant Borel
probability measure with full support.

\subsection{Symbolic dynamics}
Below we have collected some basic facts from symbolic dynamics. The standard reference here is
the book of Lind and Marcus \cite{LM}.

Let $\{0,1\}^{\Z_+}$ be the space of infinite sequence of symbols in $\{0,1\}$
indexed by the non-negative integers.
Equip $\{0,1\}$ with the discrete topology and $\{0,1\}^{\Z_+}$ with the product topology.
The space $\{0,1\}^{\Z_+}$ is compact and metrizable. A compatible metric $\rho$ is given by
\[
\rho(x,y)=
\begin{cases}
  0,&x=y,\\
  2^{-J(x,y)}, &x\neq y,
\end{cases}
\]
where $J(x,y)=\min\{i\in{\Z_+}\colon x_i\neq y_i\}$.

A \emph{word} of length $n$ is a sequence $w=w_1w_2\dotsc w_n\in\{0,1\}^n$ and its \emph{length} is denoted by $|w|=n$.
The \emph{concatenation} of words $w=w_1w_2\dotsc w_n$ and $v=v_1v_2\dotsc v_m$
is the word $wv=w_1w_2\dotsc w_nv_1v_2\dotsc v_m$.
If $u$ is a word and $n\in\N$, then $u^n$ is the concatenation of $n$ copies of $u$
and $u^\infty$ is the sequence in $\{0,1\}^{\Z_+}$ obtained by infinite concatenation of the word $u$.
We say that a word $u=u_1u_2\dotsc u_k$ appears in $x=(x_i)\in\{0,1\}^{\Z_+}$ at position $t$
if $x_{t+j-1}=u_j$ for $j=1,2,\dotsc,k$.
For $x\in \{0,1\}^{\Z_+}$ and $i,j\in \Zp$, $i\leq j$
write $x_{[i,j]}=x_ix_{i+1}\ldots x_j$. Words $x_{[i,j)}$ and $x_{(i,j]}$,
$x_{(i,j)}$ are defined in the same way.

The \emph{shift map} $\sigma\colon\{0,1\}^{\Z_+}\to\{0,1\}^{\Z_+}$ is defined by
$\sigma(x)_n=x_{n+1}$ for $n\in{\Z_+}$.
It is clear that $\sigma$ is a continuous surjection.
The dynamical system $(\{0,1\}^{\Z_+},\sigma)$ is called the \emph{full shift}.
If $X$ is non-empty, closed  and $\sigma$-invariant (i.e. $\sigma(X)\subset X$),
then $(X, \sigma)$ is called a \emph{subshift}.

Given any collection $\F$ of words over $\{0,1\}$,
we define a \emph{subshift specified by $\F$},
denoted by $X_\F$,
as the set of all sequences from $\{0,1\}^{\Z_+}$ which do not contain any words from $\F$.
We say that $\F$ is a collection of \emph{forbidden words for $X_\F$}
as words from $\F$ are forbidden to occur in $X_\F$.

A \emph{cylinder} in $\{0,1\}^{\Z_+}$ is any set
$[u]=\{x\in X: x_0x_1\dotsc x_{n-1}=u\}$, where $u$ is a word of length $n$.
Note that the family of cylinders in $\{0,1\}^{\Z_+}$ is a base of the topology of $\{0,1\}^{\Z_+}$.
Let $X$ be a subshift of $\{0,1\}^{\Z_+}$.
The \emph{language of $X$}, denoted $\mathcal{L}(X)$, consists of all words that can appear in some $x\in X$,
i.e. $\mathcal{L}(X)=\set{x_{[i,j]}: x\in X, i\leq j}$.

For every word $u\in \mathcal{L}(X)$, let $[u]_X=X\cap [u]$.
Then $\{[u]_X\colon u\in\mathcal{L}(X)\}$ forms a base of the topology of $X$.
Let $\F=\{0,1\}^*\setminus\mathcal{L}(X)$, where $\{0,1\}^*$ is the collection of all finite words over $\{0,1\}$.
Then $X=X_\F$, that is, $\F$ is the set of forbidden words for $X$.

\begin{rem}
In some examples we will consider sequences indexed by positive integers $\N$ instead of $\Zp$.
That is, we identify
$\set{0,1}^\N$ with $\set{0,1}^\Zp$. It will simplify some calculations.
\end{rem}

\section{Multi-recurrent points }

\subsection{Definition and basic properties}

\begin{defn}\label{def:multirec}
Let $(X,T)$ be a dynamical system. A point $x\in X$ is called \emph{multi-recurrent}
if for every  $d\geq 1$ there exists a strictly increasing sequence $\{n_k\}_{k=1}^\infty$ in $\N$ such that
for each $i=1,2,\dotsc,d$ we have $T^{i n_k} x\to x$ as $k\to\infty$.
\end{defn}

In other words, a point $x\in X$ is multi-recurrent if and only if for every $d\ge 1$ the point $(x,\ldots,x)\in X^d$ is recurrent for $T\times T^2\times\dotsb\times T^d$.
Equivalently, $x$ is multi-recurrent if and only if for every $d\geq 1$
and every neighborhood $U$ of $x$
there exists $k\in\N$ such that $k,2k,\dotsc,dk\in N(x,U)$.

While we do not need such generality in the present paper,
observe that Definition~\ref{def:multirec} can be stated for $\Z^d$-actions in a similar manner.
A proof of the following observation is straightforward, thus we leave it to the reader.
\begin{lem}\label{lem:multi-rec-n}
Let $(X,T)$ be a dynamical system and $x\in X$.
Then the following conditions are equivalent:
\begin{enumerate}
  \item $x$ is a multi-recurrent point of $(X,T)$;
  \item $x$ is a multi-recurrent point of $(X,T^n)$ for some $n\in\N$;
  \item $x$ is a multi-recurrent point of $(X,T^n)$ for any $n\in\N$.
\end{enumerate}
\end{lem}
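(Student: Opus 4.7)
The plan is to establish all three equivalences via the chain $(3)\Rightarrow(2)\Rightarrow(1)\Rightarrow(3)$, of which $(3)\Rightarrow(2)$ is the trivial specialization ``for all $n$ implies for some $n$,'' while the remaining two implications reduce to a one-line rescaling of the witnessing sequence and a one-line enlargement of the parameter $d$ in Definition~\ref{def:multirec}; this explains why the statement is labeled straightforward.

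For $(2)\Rightarrow(1)$, I would fix $d\ge 1$ and use the multi-recurrence of $x$ for $T^n$ with that $d$ to obtain a strictly increasing sequence $\{m_k\}$ with $(T^n)^{i m_k}x\to x$ for $i=1,\dotsc,d$. Setting $n_k:=n m_k$ gives a strictly increasing sequence in $\N$ with
\[
T^{i n_k}x = T^{in m_k}x = (T^n)^{i m_k}x \to x
\qquad\text{for every } i=1,\dotsc,d,
\]
so $x$ is multi-recurrent for $T$.

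For $(1)\Rightarrow(3)$, I would fix $n\in\N$ and $d\ge 1$ and apply the multi-recurrence of $x$ for $T$ with the \emph{enlarged} parameter $d':=nd$, obtaining $n_k\to\infty$ with $T^{j n_k}x\to x$ for every $j=1,\dotsc,nd$. Restricting the allowed values of $j$ to the arithmetic progression $\{in:i=1,\dotsc,d\}\subset\{1,\dotsc,nd\}$ immediately yields $(T^n)^{i n_k}x = T^{in\cdot n_k}x\to x$ for $i=1,\dotsc,d$, which is multi-recurrence of $x$ for $T^n$ with parameter $d$.

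The only conceptual content is the observation that Definition~\ref{def:multirec} is \emph{monotone} in $d$ (a witnessing sequence for $d'$ also witnesses every smaller parameter) and that a witnessing sequence may be rescaled by a fixed integer factor $n$ to convert between witnesses for $T$ and for $T^n$; there is no genuine obstacle and no appeal to any external tool such as the staircase-product reformulation or the classical recurrence-versus-$S^n$-recurrence lemma.
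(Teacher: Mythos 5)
Your proof is correct; the paper explicitly omits the argument as ``straightforward'' and leaves it to the reader, and your chain $(3)\Rightarrow(2)\Rightarrow(1)\Rightarrow(3)$ --- rescaling the witnessing sequence by $n$ for $(2)\Rightarrow(1)$ and enlarging the parameter to $d'=nd$ then restricting to multiples of $n$ for $(1)\Rightarrow(3)$ --- is precisely the intended elementary argument.
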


The following fact implies that every dynamical system contains a multi-recurrent point, because every dynamical system has a minimal subsystem.
Note that Lemma \ref{lem:G-delta} can also be deduced from properties of sets of multiple recurrence provided by \cite[Lemma~2.5]{Kra15}.
Results in \cite{Kra15} allow further analysis of return times of multi-recurrent points.

\begin{lem}\label{lem:G-delta}
Let $(X,T)$ be a dynamical system.
\begin{enumerate}
  \item\label{lem:G-delta:1} 
  The set of all multi-recurrent points of $(X,T)$ is a $G_\delta$ subset of $X$.
  \item\label{lem:G-delta:2} If $(X,T)$ is minimal, then the set of all multi-recurrent points is residual in $X$.
\end{enumerate}
\end{lem}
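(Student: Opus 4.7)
The plan is to exhibit the set of multi-recurrent points as a countable intersection of open sets built from the definition, and then to use the Topological Multiple Recurrence Theorem II to show that each such open set is dense when $(X,T)$ is minimal.

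For part \ref{lem:G-delta:1}, I would unwind Definition~\ref{def:multirec}. For integers $d,m\geq 1$, set
\[
M_{d,m}=\Bigl\{x\in X\colon\ \exists\,k\geq 1\text{ such that }\rho(T^{ik}x,x)<1/m\text{ for every }i=1,\dotsc,d\Bigr\}.
\]
Each $M_{d,m}$ is open: it is the union over $k\geq 1$ of the sets $\bigcap_{i=1}^{d}\{x:\rho(T^{ik}x,x)<1/m\}$, and each of these is open by continuity of $T^{ik}$. A straightforward check against Definition~\ref{def:multirec} shows that a point $x\in X$ is multi-recurrent if and only if $x\in M_{d,m}$ for every $d,m\geq 1$. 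Consequently, the set of multi-recurrent points equals $\bigcap_{d,m\geq 1} M_{d,m}$, which is $G_\delta$.

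For part \ref{lem:G-delta:2}, assuming $(X,T)$ minimal, I would show that each $M_{d,m}$ is also dense, so that the conclusion follows by the Baire category theorem. Given a non-empty open $V\subset X$, choose $y\in V$ and $\delta>0$ with $\delta<1/(2m)$ and $B(y,\delta)\subset V$. Applying the Topological Multiple Recurrence Theorem II to the open set $U:=B(y,\delta)$ yields some $n\geq 1$ and a point $z\in U$ with $T^{in}z\in U$ for all $i=1,\dotsc,d$. The triangle inequality then gives $\rho(T^{in}z,z)\leq \rho(T^{in}z,y)+\rho(y,z)<2\delta<1/m$, so $z\in V\cap M_{d,m}$; hence $M_{d,m}$ is dense.

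Neither step is really an obstacle: the only subtlety is making sure the quantifier $\max_{1\le i\le d}\rho(T^{ik}x,x)<1/m$ is the right discretization of the ``for every neighborhood'' condition, which is a routine density argument in the metric. The work is concentrated in part \ref{lem:G-delta:2}, where the Topological Multiple Recurrence Theorem II does all of the heavy lifting, and this is precisely the reason the hypothesis of minimality is invoked.
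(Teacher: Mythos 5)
Your proposal is correct and follows essentially the same route as the paper: the paper writes the multi-recurrent points as $\bigcap_d R_d$ with $R_d$ open (using a single diagonal index $1/d$ where you use two indices $d,m$, an immaterial difference), and in the minimal case invokes the Topological Multiple Recurrence Theorem II to get density of each $R_d$, exactly as you do — you merely spell out the $\eps/2$-ball detail that the paper leaves implicit.
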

\begin{proof}
\eqref{lem:G-delta:1}: Given $d\geq 1$, let
\[R_d=\bigl\{y\in X\colon \exists n\geq 1\text{ such that }\rho(y,T^{in}y)<\tfrac{1}{d} \text{ for }i=0,1,\ldots,d\}.\]
It is clear that every $R_d$ is open, hence
 $R=\bigcap_{d=1}^\infty R_d$ is a $G_\delta$ subset of $X$.
It is easy to see that $R=\bigcap_{d=1}^\infty R_d$ is the set of all multi-recurrent points.

\eqref{lem:G-delta:2}: If $(X,T)$ is minimal, then it follows from the Topological Multiple Recurrence Theorem II
that $R_d$ is dense in $X$ for every $d\ge1$. Thus
$R=\bigcap_{d=1}^\infty R_d$ is residual in $X$.
\end{proof}

\begin{lem}\label{lem:urMultiRec}
If a dynamical system $(X,T)$ is uniformly rigid, then every point in $X$ is multi-recurrent.
\end{lem}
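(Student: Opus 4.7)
The plan is to verify the equivalent form of multi-recurrence stated right after Definition~\ref{def:multirec}: a point $x$ is multi-recurrent if and only if for every $d\ge 1$ and every neighborhood $U$ of $x$ there exists $k\in\N$ with $ik\in N(x,U)$ for all $i=1,2,\dotsc,d$. Fix $x\in X$, $d\geq 1$, and $\varepsilon>0$ small enough that $B(x,\varepsilon)\subseteq U$. Set $\delta=\varepsilon/d$. By uniform rigidity, there exists $n\in\N$ such that $\rho(T^ny,y)<\delta$ holds \emph{simultaneously} for every $y\in X$.

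The key observation is that this uniform bound can be telescoped along the orbit of $x$: applying it with $y=T^{(j-1)n}x$ for $j=1,2,\dotsc,d$ gives $\rho(T^{jn}x, T^{(j-1)n}x)<\delta$, and then the triangle inequality yields
\[
\rho(T^{in}x,x)\leq \sum_{j=1}^{i}\rho(T^{jn}x,T^{(j-1)n}x)<i\delta\leq \varepsilon
\]
for every $i=1,\dotsc,d$. Thus $T^{in}x\in B(x,\varepsilon)\subseteq U$ for all $i=1,\dotsc,d$, which is exactly the neighborhood characterization of multi-recurrence. Letting $U$ shrink through a neighborhood base of $x$ delivers the required sequence $\{n_k\}$ (if one wants it strictly increasing, run the argument with $\varepsilon_k\to 0$; the corresponding $n_k$ either tend to infinity, giving an increasing subsequence directly, or stabilize at some fixed value $m$ with $T^m=\mathrm{id}$, in which case the sequence $\{km\}_{k\ge 1}$ works).

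There is essentially no obstacle here; the entire content of the argument is the telescoping estimate, and what makes it succeed is precisely the \emph{uniform} character of the rigidity assumption, so that one and the same $n$ controls all $d$ iterates $x, T^n x, T^{2n}x,\dotsc, T^{dn}x$ at once. If one only assumed pointwise recurrence of each $y$, the proof would break down immediately — which is consistent with the example, promised earlier in the introduction, of minimal systems containing points that are recurrent but not multi-recurrent.
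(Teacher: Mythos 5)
Your proof is correct and follows essentially the same route as the paper: apply uniform rigidity with the threshold $\varepsilon/d$ and telescope via the triangle inequality to bound $\rho(T^{in}x,x)$ for $i=1,\dotsc,d$. The only difference is that you also spell out how to extract a strictly increasing sequence $\{n_k\}$, a detail the paper leaves implicit.
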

\begin{proof}
Fix $d\geq 1$. Since $(X,T)$ is uniformly rigid, for
every $\varepsilon>0$ there exists $n\in \N$ such that $\rho(T^nx,x)<\varepsilon/d$ for all $x\in X$.
Then
\[
\rho(x,T^nx)<\varepsilon/d,\; \rho(T^nx,T^{2n}x)<\varepsilon/d,\ldots,\rho(T^{(d-1)n}x,T^{dn}x)<\varepsilon/d,
\]
which shows that the diameter of $\{x,T^nx,T^{2n}x,\dotsc,T^{dn}x\}$ is less than $\varepsilon$.
It follows that $(x,\ldots,x)\in X^d$ is recurrent for $T\times T^2\times\dotsb\times T^d$.
But $d$ is arbitrary, hence $x$ is multi-recurrent.
\end{proof}

\begin{rem}\label{rem:distal}
It is shown in \cite[Proposition~9.16]{F81} that if a point is distal then it is multi-recurrent.
In particular, in a distal system every point is multi-recurrent.
\end{rem}

\begin{rem}
Notice that there exist minimal as well as non-minimal weakly mixing and uniformly rigid systems (see, respectively, \cite{GM89} and \cite{FHLO16}).
By Lemma~\ref{lem:urMultiRec}, every point in those systems is multi-recurrent.
None of these examples can be a subshift. Furthermore, a non-trivial strongly mixing dynamical system can never be uniformly rigid by~\cite{GM89}.
\end{rem}

One of the referees of this paper, motivated by the above remark,  suggested the following problem.

\begin{ques}
Is there a non-trivial weakly mixing subshift or any mixing dynamical system for which each point is multi-recurrent? Can such a system be minimal?
\end{ques}


In~\cite{W98} it is proved that if each pair in a dynamical $(X, T)$ is positively
recurrent under $T\times T$, then it has zero topological entropy (it is also a consequence of a result in \cite{BHR}).
Distal or uniformly rigid systems are examples of pointwise multi-recurrent systems which have zero topological entorpy.
But pointwise multi-recurrence does not imply zero topological entropy in general as shown below.

\begin{rem}
A dynamical system  $(X,T)$ is \emph{multi-minimal}
if for every $d\geq 1$ $(X^d,T\times  T^2\times\dotsb\times T^d)$ is minimal~\cite{M10}.
Clearly, every point in a multi-minimal system is multi-recurrent.
Note that by the proof of \cite[Proposition 3.5]{HYZ14}
there exists a multi-minimal system with positive topological entropy.
\end{rem}

The existence of a system constructed in the following theorem is probably a folklore, but we were unable to find it in the literature.

\begin{thm}\label{thm:minimial-not-multi-rec}
For every $d\geq 1$,  there is a minimal point $x$ in the full shift $(\{0,1\}^{\Z_+},\sigma)$
such that $(x,x,\ldots,x)\in X^{d}$ is recurrent under $\sigma \times \sigma^2\times \dotsb \times \sigma^{d}$
and $(x,x,\ldots,x)\in X^{d+1}$ is not recurrent under
$\sigma \times \sigma^2 \times \dotsb \times \sigma^{d}\times \sigma^{d+1}$.
\end{thm}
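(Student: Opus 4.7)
The plan is to build $x$ recursively as the limit of a nested sequence of finite words $B_0\sqsubsetneq B_1\sqsubsetneq\cdots$ in $\{0,1\}^*$, each $B_k$ being a prefix of $B_{k+1}$, so that $x=\lim_k B_k\in\{0,1\}^{\Z_+}$ is well defined. The words are engineered so that arbitrarily long prefixes of $x$ recur in $x$ along $(d+1)$-term arithmetic progressions starting at $0$, yet one fixed long prefix never recurs along a $(d+2)$-term arithmetic progression starting at~$0$.

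Concretely, given $B_k$ of length $\ell_k$, I would set
\[
B_{k+1} \;=\; B_k\,W_k^{(1)}\,B_k\,W_k^{(2)}\,B_k\,\dotsb\,W_k^{(d)}\,B_k,
\]
where the spacers $W_k^{(1)},\dotsc,W_k^{(d)}\in\{0,1\}^{m_k}$ have a common length $m_k$ chosen so that $n_k:=\ell_k+m_k$ satisfies the lacunary growth $n_{k+1}=(d+1)^2 n_k$ for $k\geq k_0$. Each spacer is chosen so that $B_k$ does not occur inside any $W_k^{(i)}$ nor across any of the boundaries in the above concatenation; a standard counting argument produces such spacers once $m_k$ is large compared to $\ell_k$. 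Consequently the only occurrences of $B_k$ in $B_{k+1}$ are the $d+1$ ``obvious'' ones at positions $0,n_k,2n_k,\dotsc,d n_k$. Passing to the limit, for every $k$ one has $\sigma^{in_k}(x)\in[B_k]$ for $i=0,1,\dotsc,d$, so $(x,x,\dotsc,x)\in X^d$ is recurrent under $\sigma\times\sigma^2\times\dotsb\times\sigma^d$. Minimality of $\overline{\Orb(x,\sigma)}$ follows because each $B_k$ recurs in every $B_{k'}$, $k'\geq k$, with gap at most $n_k$, so $N(x,[B_k])$ is syndetic.

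It remains to preclude any arithmetic progression $\{n,2n,\dotsc,(d+1)n\}$, $n\geq 1$, of returns of some fixed prefix $B_{k_0}$. Once accidental factors are excluded, the occurrence positions of $B_{k_0}$ in $x$ form the digit set
\[
\Pi_{k_0} \;=\; \Bigl\{\sum_{k\geq k_0} j_k\, n_k\colon j_k\in\{0,1,\dotsc,d\},\ \text{only finitely many nonzero}\Bigr\}.
\]
Thanks to the growth $n_{k+1}=(d+1)^2 n_k$, each element of $\Pi_{k_0}$ admits a unique digit expansion, and multiplying any such expansion by $d+1$ produces at each index a ``digit'' at most $(d+1)d<(d+1)^2=n_{k+1}/n_k$, so no carrying is possible. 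Hence for $n=\sum j_k n_k\in\Pi_{k_0}\setminus\{0\}$, any $j_k\geq 1$ gives $(d+1)j_k\geq d+1>d$, so $(d+1)n$ has a ``digit'' exceeding $d$ and therefore $(d+1)n\notin\Pi_{k_0}$. It follows that no set $\{n,2n,\dotsc,(d+1)n\}$ with $n\geq 1$ lies in $\Pi_{k_0}$, so $(x,x,\dotsc,x)\in X^{d+1}$ is not recurrent under $\sigma\times\sigma^2\times\dotsb\times\sigma^{d+1}$.

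The main obstacle is the inductive construction of the spacers, which must simultaneously (i)~contain no copy of $B_k$ as a factor, (ii)~avoid creating accidental occurrences of $B_k$ across the boundaries with adjacent copies of $B_k$, and (iii)~respect the exact growth $n_{k+1}=(d+1)^2 n_k$. A probabilistic argument produces such spacers once $m_k$ is sufficiently large, but the three constraints must be balanced carefully. Consistency with Lemma~\ref{lem:urMultiRec} is worth noting: the resulting subshift cannot be uniformly rigid, which matches the intrinsically irregular character of the spacers demanded by the argument.
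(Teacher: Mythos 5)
Your construction does not produce a \emph{minimal} point, and this is a fatal gap rather than a technicality, since minimality is part of the statement (and is what the paper later relies on, e.g.\ in Proposition~\ref{prop:multi-rec-not-lift}). The justification you give --- ``each $B_k$ recurs in every $B_{k'}$, $k'\geq k$, with gap at most $n_k$'' --- is false under the growth condition $n_{k+1}=(d+1)^2n_k$ that your no-carry argument requires. The gaps of size $n_k$ occur only \emph{inside} a single copy of $B_{k+1}$; but $|B_{k+1}|=dn_k+\ell_k<(d+1)n_k$, so the next spacer $W_{k+1}^{(i)}$ has length $m_{k+1}=(d^2+d+1)n_k-\ell_k$, longer than $B_{k+1}$ itself, and by construction $B_k$ does not occur in it. Hence consecutive occurrences of $B_k$ at positions $dn_k$ and $n_{k+1}$ are separated by a gap of order $n_{k+1}$, and over successive levels the gaps $n_{K+1}-d\sum_{k'=k}^{K}n_{k'}$ tend to infinity. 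Equivalently, your own description of the return set as the digit set $\Pi_{k_0}$ already rules out minimality: $\Pi_{k_0}$ has only $(d+1)^{K-k_0}$ elements in $[0,n_K)$ while $n_K=(d+1)^{2(K-k_0)}n_{k_0}$, so its density tends to $0$ and it cannot be syndetic. There is a genuine tension here: the lacunary structure that makes $(d+1)\cdot\Pi_{k_0}\cap\Pi_{k_0}=\emptyset$ provable by a no-carry argument forces the return times of $B_{k_0}$ to be non-syndetic, whereas minimality demands exactly that they be syndetic. (A secondary issue: for $\Pi_{k_0}$ to be the exact occurrence set you must forbid occurrences of $B_{k_0}$ --- not merely of $B_k$ --- in every spacer $W_k^{(i)}$ with $k\geq k_0$; a random word of length $m_k\gg 2^{\ell_{k_0}}$ almost surely contains the short prefix $B_{k_0}$, so the counting argument as stated does not deliver this. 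That part is repairable by requiring spacers to avoid $B_0$, since every $B_{k'}$ begins with $B_0$; the minimality problem is not.)

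The paper resolves the tension differently: it uses a primitive substitution of length $(d+1)^2$, namely $1\mapsto 1a_1\dotsm a_{(d+1)^2-1}$ and $0\mapsto 0a_1\dotsm a_{(d+1)^2-1}$ with $a_j=0$ exactly when $(d+1)\mid j$. Minimality of the fixed point is then automatic, the prefix $[\tau^k(1)]^{d+1}$ yields the $d$-fold recurrence along $n_k=(d+1)^{2k}$, and the failure of $(d+1)$-fold recurrence is read off not from the (syndetic, hence too dense) occurrence set of a long prefix but from the single-symbol cylinder $[1]$, via the arithmetic fact that $x_j=1$ implies $x_{(d+1)j}=0$. To salvage your approach you would have to populate $B_{k+1}$ with enough copies of $B_k$ to keep the return times syndetic while still controlling multiplication by $d+1$, which essentially forces you back to a substitution-like structure.
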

\begin{proof}
First we consider the case $d=1$ and then the general case.
For $d=1$, we define the local rule of a substitution by
\begin{align*}
\tau\colon& 1\longrightarrow 1101,\\
&0\longrightarrow 0101,
\end{align*}
and then extend it to all finite words over $\{0,1\}$ putting inductively $\tau(uv)=\tau(u)\tau(v)$.
Let $x=(x_i)_{i=0}^\infty=\lim_{k\to\infty} \tau^k(1) 0^\infty$ be a fixed point of $\tau$.
It is easy to check that $x\in\{0,1\}^{\Z_+}$ is a minimal point.

We claim that $x_i=1$ if and only if $i=0$ or $i=4^{m}(2n+1)$ for some $n,m\in\Z_+$.
It will follow that $x_i=0$ if and only if $i=2\cdot 4^{m}(2 n+1)$ for some $n,m\in\Z_+$.

These conditions are clearly
true for $i=0,1,2,3$. Now fix any $i\geq 0$ and assume that our claim holds for $i$.
We will show that the claim also holds for
$4i,4i+1,4i+2,4i+3$. We have two cases to consider.

If $x_i=1$, then by the claim $i=4^m(2n+1)$ for some $m,n\in\Z_+$. By the definition of substitution
$x_{[4i,4i+3]}=\tau(x_i)=\tau(1)$, so
\begin{itemize}
  \item $x_{4i}=1$ and $4i=4^{m+1}(2n+1)$;
  \item $x_{4i+1}=1$ and $4i+1=4^{m+1}(2n+1)+1=2(2\cdot 4^m(2n+1))+1$;
  \item $x_{4i+2}=0$ and $4i+2=4^{m+1}(2n+1)+2=2(2\cdot 4^m(2n+1)+1)$;
  \item $x_{4i+3}=1$ and $4i+3=4^{m+1}(2n+1)+3=2(2\cdot 4^m(2n+1)+1)+1$.
\end{itemize}

If $x_i=0$, then $i=2\cdot4^m\cdot n$ for some $m,n\in\Z_+$. Then
$x_{[4i,4i+3]}=\tau(0)$ and we have:
\begin{itemize}
  \item $x_{4i}=0$ and $4i=2\cdot 4^{m+1}\cdot n$;
  \item $x_{4i+1}=1$ and $4i+1=2\cdot 4^{m+1}\cdot n+1=2(4^{m+1}\cdot n)+1$;
  \item $x_{4i+2}=0$ and $4i+2=2\cdot 4^{m+1}\cdot n+2=2(4^{m+1}\cdot n +1)$;
  \item $x_{4i+3}=1$ and $4i+3=2\cdot 4^{m+1}\cdot n+3=2(4^{m+1}\cdot n+1)+1$.
\end{itemize}
This ends the proof of the claim.

The point $x$ is minimal, hence
it is recurrent under $\sigma$.
By the claim, it is clear that if $i\in\N$ and $x_i=1$ then $x_{2i}=0$.
So $(x,x)$ is not recurrent under $\sigma\times\sigma^2$, because it will never return to $[1]\times [1]$.

For the case $d\geq 2$, we extend the above idea.
We define a local rule of a substitution by
\begin{align*}
\tau\colon& 1\longrightarrow 1a_1\ldots a_{(d+1)^2-1},\\
&0\longrightarrow 0a_1\ldots a_{(d+1)^2-1},
\end{align*}
where $a_{j}=0$ for $j \equiv 0 \bmod(d+1)$ and $a_j=1$ otherwise.
Let $x=\lim_{k\to\infty} \tau^k(1) 0^\infty$ be a fixed point of $\tau$. As above, $x$ is a minimal point.

For every $k\in\N$, $x$ can be expressed as $x=[\tau^k(1)]^{d+1}\tau^k(0) \dotsc$,
so $(x,x,\dotsc,x)\in X^d$ is recurrent under $\sigma\times \sigma^2\times \dotsb\times \sigma^d$.
Analogously to the case $d=1$, we prove that if $j\in\N$ and $x_j=1$ then $x_{(d+1)j}=0$. The details are left to the reader.
So $(x,x,\dotsc,x)\in X^{d+1}$ is not recurrent under $\sigma\times \sigma^2\times \dotsb\times \sigma^d\times \sigma^{d+1}$.
\end{proof}

\subsection{Multi-recurrent points and factor maps.}
Let $\pi\colon (X,T)\to (Y,S)$ be a factor map.
It is well known that if $y\in Y$ is a recurrent point of $S$,
then there is a recurrent point $x\in X$ of $T$ with $\pi(x)=y$.
In this subsection we investigate if this result holds for multi-recurrent points.
It turns out that it is still the case for distal extensions but may fail
for proximal extensions.

\begin{prop} \label{prop:multi-rec-lift}
Let $\pi\colon (X,T)\to (Y,S)$ be a factor map.
\begin{enumerate}
  \item\label{prop:multi-rec-lift:1}  If $x\in X$ is multi-recurrent, then so is $\pi(x)$.
  \item\label{prop:multi-rec-lift:2} If $y\in Y$ is multi-recurrent and $\pi^{-1}(y)$ consists of a single point $x$,
then $x$ is also multi-recurrent.
\end{enumerate}
\end{prop}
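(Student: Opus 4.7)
The plan is to handle the two assertions separately, in each case exploiting the reformulation of multi-recurrence via the diagonal action $T\times T^2\times\dotsb\times T^d$ on $X^d$ recorded after Definition~\ref{def:multirec}.

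For part~\eqref{prop:multi-rec-lift:1}, continuity of $\pi$ does everything. If $\{n_k\}$ realises multi-recurrence of $x$ at level $d$, meaning $T^{in_k}x\to x$ for $i=1,\dotsc,d$, then applying $\pi$ together with $\pi\circ T^{in_k}=S^{in_k}\circ\pi$ gives $S^{in_k}\pi(x)\to\pi(x)$ for the same $i$. Since $d$ was arbitrary, $\pi(x)$ is multi-recurrent.

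For part~\eqref{prop:multi-rec-lift:2}, the crux is an upper semicontinuity statement at the singleton fibre: whenever $\pi^{-1}(y)=\{x\}$, every open neighborhood $U$ of $x$ in $X$ contains $\pi^{-1}(V)$ for some open neighborhood $V$ of $y$ in $Y$. This is the standard compactness remark---$\pi(X\setminus U)$ is compact and does not contain $y$, so $V:=Y\setminus\pi(X\setminus U)$ works. Fix $d\geq 1$ and $\varepsilon>0$, and produce such a $V$ for $U=B(x,\varepsilon)$. Multi-recurrence of $y$ supplies arbitrarily large $n$ with $S^{in}y\in V$ for $i=1,\dotsc,d$; but then $\pi(T^{in}x)=S^{in}y\in V$ forces $T^{in}x\in\pi^{-1}(V)\subset B(x,\varepsilon)$ for each such $i$. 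Letting $\varepsilon$ run through $1/k$ and extracting a strictly increasing sequence $n_k$ in the obvious way yields multi-recurrence of $x$ at level $d$, and $d$ was arbitrary.

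I do not anticipate any real obstacle: \eqref{prop:multi-rec-lift:1} is a one-line continuity computation, and \eqref{prop:multi-rec-lift:2} reduces to the compactness fact that a singleton fibre forces upper semicontinuity of $y\mapsto\pi^{-1}(y)$ at $y$. The hypothesis $\pi^{-1}(y)=\{x\}$ enters exactly in this last reduction, and the absence of it is what allows the lifting to fail through general proximal extensions, as announced in the introduction and presumably illustrated right after this proposition.
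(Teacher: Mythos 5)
Your proposal is correct and follows essentially the same route as the paper: part~(1) by continuity of $\pi$ and the intertwining relation, and part~(2) by observing that a singleton fibre forces every neighborhood $U$ of $x$ to contain $\pi^{-1}(V)$ for some neighborhood $V$ of $y$, whence $N(y,V)\subset N(x,U)$. The only difference is that you spell out the compactness argument behind that containment, which the paper simply asserts.
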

\begin{proof}
\eqref{prop:multi-rec-lift:1}: It is a direct consequence of continuity of $\pi$.

\eqref{prop:multi-rec-lift:2}: Since $\pi^{-1}(y)=\{x\}$,
for every neighborhood $U$ of $x$ there exists a neighborhood $V$ of $y$ such that
$\pi^{-1}(V)\subset U$.
Therefore $N(y,V)\subset N(x,U)$. It follows that if $y$ is multi-recurrent, then so is $x$.
\end{proof}

By Remark~\ref{rem:distal} every distal system is multi-recurrent.
In particular, every equicontinuous system is multi-recurrent. Therefore the projection
of minimal dynamical system onto its maximal equicontinuous factor maps every point onto a multi-recurrent point.
It turns out that the system presented in Theorem~\ref{thm:minimial-not-multi-rec} is a proximal extension of its
maximal equicontinuous factor and there is a fiber not containing any multi-recurrent points.

\begin{prop}\label{prop:multi-rec-not-lift}
There exist two dynamical systems $(X,T)$ and $(Y,S)$, a proximal factor map $\pi \colon (X,T)\to (Y,S)$
and a point $y\in Y$ which is multi-recurrent but $\pi^{-1}(y)$ does not contain any multi-recurrent points.
\end{prop}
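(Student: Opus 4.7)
The plan is to reuse the substitution subshift from Theorem~\ref{thm:minimial-not-multi-rec} (case $d=1$) and take its canonical factor map onto the maximal equicontinuous factor. Let $(X,T)$ be the orbit closure of the fixed point $x=\tau^\infty(1)$ of the substitution $\tau\colon 1\mapsto 1101$, $0\mapsto 0101$, and let $\pi\colon (X,T)\to (Y,S)$ be the factor map onto the maximal equicontinuous factor. Since $\tau$ is a primitive substitution of constant length $4$, the system $(Y,S)$ is (conjugate to) the $4$-adic odometer; in particular it is equicontinuous and hence distal, so by Remark~\ref{rem:distal} every point of $Y$ is multi-recurrent. Set $y:=\pi(x)$; then $y$ is multi-recurrent automatically.

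The core of the argument is to show that $\pi^{-1}(y)$ contains no multi-recurrent point. A straightforward induction, exploiting that $\tau(0)=0101$ and $\tau(1)=1101$ agree on positions $1,2,3$, shows that $\tau^k(0)$ and $\tau^k(1)$ differ only at position $0$ for every $k\ge 1$. Setting $x':=\tau^\infty(0)$, we therefore have $x_i=x'_i$ for all $i\ge 1$ while $x_0=1\ne 0=x'_0$. Because $\tau^k(0)$ appears in $x$ at position $2\cdot 4^k$ (it is the third block in the decomposition $\tau^{k+1}(1)=\tau^k(1)\tau^k(1)\tau^k(0)\tau^k(1)$), every prefix of $x'$ is a subword of $x$, so $x'\in X$. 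Conversely, any $\omega\in\pi^{-1}(y)$ must satisfy $\omega_{[0,4^k-1]}=\tau^k(a_k)$ for some letter $a_k\in\{0,1\}$ and each $k\ge 1$; since the first symbol of $\tau(a)$ equals $a$, compatibility across $k$ forces the sequence $(a_k)$ to be constant, so $\omega\in\{x,x'\}$. Hence $\pi^{-1}(y)=\{x,x'\}$, and because $\sigma x=\sigma x'$, these two points are forward asymptotic, hence proximal. The same hierarchical analysis applied at any other $z\in Y$ yields that every fiber of $\pi$ is either a singleton or a pair of forward-asymptotic points, so $\pi$ is a proximal factor map.

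It remains to verify that $x'$ is not multi-recurrent ($x$ itself is not multi-recurrent by Theorem~\ref{thm:minimial-not-multi-rec}). I would reuse the combinatorial claim from the proof of that theorem: for $i\ge 1$, $x_i=0$ if and only if $i=2\cdot 4^m(2k+1)$ for some $m,k\in\Zp$. Suppose for contradiction that $(x',x')$ were recurrent under $\sigma\times\sigma^2$. Then there would exist $n\ge 1$ with $x'_n=x'_{2n}=x'_0=0$, and since $n,2n\ge 1$ this becomes $x_n=x_{2n}=0$. The equality $x_n=0$ forces $n=2\cdot 4^m(2k+1)$, so $2n=4^{m+1}(2k+1)$, and the claim applied to $2n$ then yields $x_{2n}=1$, contradicting $x_{2n}=0$. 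Hence $x'$ is not multi-recurrent, so $\pi^{-1}(y)$ contains no multi-recurrent point while $y$ itself is multi-recurrent and $\pi$ is proximal, as required.

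The main obstacle is the careful identification of the fiber $\pi^{-1}(y)=\{x,x'\}$; it rests on the hierarchical combinatorics of $\tau$ together with the fact that the column number of $\tau$ is $2$ at position $0$ and $1$ at positions $1,2,3$. Once the fiber is pinned down, proximality of $\pi$ follows from the asymptotic relation $\sigma x=\sigma x'$ and an analogous description of the remaining fibers, and the failure of multi-recurrence for $x'$ is a minor adaptation of the combinatorial argument already used for $x$ in Theorem~\ref{thm:minimial-not-multi-rec}.
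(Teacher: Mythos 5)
Your proposal is correct and follows essentially the same route as the paper: the same substitution $\tau\colon 1\mapsto 1101$, $0\mapsto 0101$, the same pair of fixed points (your $x'$ is the paper's $z$), the projection onto the $4$-adic odometer built from the block decomposition into $\{\tau^n(0),\tau^n(1)\}$, the identification $\pi^{-1}(y)=\{x,x'\}$, and the parity-of-$4$-adic-valuation argument showing neither fiber point is recurrent under $\sigma\times\sigma^2$. The only cosmetic difference is that the paper constructs the odometer explicitly as an inverse limit rather than invoking the maximal equicontinuous factor, and it derives proximality of $\pi$ from the fact that points in a common fiber share arbitrarily long common words, which is equivalent to your asymptotic-pair observation.
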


\begin{proof}
Let $\tau$ be a local rule of a substitution defined by
\begin{align*}
\tau\colon& 1\longrightarrow 1101,\\
&0\longrightarrow 0101,
\end{align*}
i.e. $\tau$ is the substitution from the proof of Theorem~\ref{thm:minimial-not-multi-rec}.
Let $x=\lim_{n\to\infty}\tau^n(1)0^\infty$ and $z=\lim_{n\to\infty}\tau^n(0)0^\infty$ be fixed points of $\tau$.
Let $X=\overline{\Orb(x,\sigma)}$.
Then $X$ is a minimal set  and $z\in X$.

Observe that $z_0=0$, $z_k=1$ for $k=4^{m}(2n+1)$ and $z_k=0$ for $k=2\cdot 4^{m}(2n+1)$.
In particular one has  $z_i=x_i$ for $i>0$ (see the proof of Theorem~\ref{thm:minimial-not-multi-rec}).
Note that if $z_j=0$ for some $j>0$ then $z_{2j}=1$ and if $z_j=1$ then $z_{2j}=0$.
Neither $(x,x)$ nor $(z,z)$  is recurrent under $\sigma\times \sigma^2$.

Denote $k_n=|\tau^n(1)|=4^n$ and observe that position of $11$ uniquely identifies position
of $\tau(1)$ in $x=\tau(x)$. By the same argument $\tau(1)\tau(1)$ identifies uniquely beginning of $\tau^2(1)$
in $x$, etc. In other words, blocks $\tau^n(0)$ and $\tau^n(1)$ form a code for every $n\geq 1$ and
hence there is a unique
decomposition of $x$ into blocks from $\set{\tau^n(0),\tau^n(1)}$.
But $X$ is the closure of the orbit of $x$ which yields
that for any $v\in X$ and any $n\geq 1$ there is a uniquely determined
infinite concatenation $\{w^{(n)}\}_{j=1}^\infty$ of blocks over $\set{\tau^n(0),\tau^n(1)}$
and a block $u_n$ of length $0\leq |u_n|< k_n$ such that $v=u_nw^{(n)}_1w^{(n)}_2w^{(n)}_3\dotsb$.

With every $n$ associate a natural projection $\xi_n\colon \Z_{k_{n+1}}\to \Z_{k_n}$,
$\xi_n(x)=x (\bmod k_n)$. Then we obtain a well defined inverse limit
$$
Y=\underleftarrow{\lim}(\Z_{k_n},\xi_n)=\set{(j_1,j_2,\ldots) : \xi_n(j_{n+1})=j_n}\subset \prod\Z_{k_n}
$$
Addition in $Y$ is coordinatewise, modulo $k_n$ on each coordinate $n$. Endowed with the product topology over the discrete topologies in $Z_{k_n}$ space $Y$ becomes a topological group satisfying the four properties characterizing odometers (see \cite{Dow}). Let $S\colon Y\to Y$
be defined by $S(j_1,j_2,\ldots)=(j_1+1,j_2+1,\ldots)$. Then $Y=\overline{\Orb((0,0,\ldots),S)}$ and $(Y,S)$ is a minimal dynamical system (an odometer).

With every $v\in X$ we can associate a sequence  $j^{(v)}=(j_1^{(v)},j_2^{(v)},\ldots)\in Y$ given by $j^{(v)}_n=k_n-|u_n|\; (\bmod  k_n)$.
This way we obtain a natural factor map $\pi \colon (X,\sigma)\to (Y,S)$,
$v \mapsto j^{(v)}$. Note that if $\pi(u)=\pi(u')$ then
for every $n\geq 1$ taking $k=|\tau^{n+1}(0)|-j_{n+1}$
provides a decomposition $\sigma^k(u),\sigma^k(u')\in \set{\tau^n(0),\tau^n(1)}^{\Zp}$
which in turn implies that $u,u'$ share arbitrarily long common word of symbols (e.g. $\tau^n(0)$),
and as a consequence $u,u'$ form a proximal pair.
This proves that $\pi$ is a proximal extension.
Denote $y=\pi(x)$.

To finish the proof observe that if $u\in X$ and $\pi(u)_n=0$ then $u\in \set{\tau^n_2(0),\tau^n_2(1)}^{\Zp}$ by the definition of $\pi$.
But if $\tau^n(0)$ is a prefix of $u$ (the same for $\tau^n(1)$ and $\pi(u)_{n+1}=0$
then $\tau^{n+1}(0)$ must be a prefix of $u$ (resp.  $\tau^{n+1}(1)$ is a prefix).
Therefore, if we put $y=(0,0,0,\ldots)$ then $\pi^{-1}(y)=\set{x,z}$ and every point in $(Y,S)$ is multi-recurrent (it is a distal system and so Remark~\ref{rem:distal} applies). 
\end{proof}

To prove that multi-recurrent points can be lifted by distal extensions,
we apply the theory of enveloping semigroup.
Let $(X,T)$ be a dynamical system.
Endow $X^X$ with the product topology.
By the Tychonoff theorem, $X^X$ is a compact Hausdorff space.
The \emph{enveloping semigroup} of $(X,T)$, denoted by $E(X,T)$, is defined as the
closure of the set $\{T^n: n\in\Z_+\}$ in $X^X$.
We refer the reader to the book~\cite{Aus}  for more details (see also \cite{AAG}).

\begin{thm}
Let $\pi \colon (X,T)\to (Y,S)$ be a factor map, let $d\geq 1$ and
assume that $y\in Y$ is recurrent under $S  \times S^2\times \dotsb \times S^d$ .
If $x\in \pi^{-1}(y)$ is such that the pair $(x,z)$ is distal for any $z\in \pi^{-1}(y)$ with $z\neq x$,
then $x$ is recurrent under $T\times T^2\times \dotsb \times T^d$.
In particular, if $y$ is multi-recurrent then so is $x$.
\end{thm}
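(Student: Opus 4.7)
The plan is to transfer the recurrence of $\mathbf{y}=(y,\ldots,y)$ up to $\mathbf{x}=(x,\ldots,x)$ inside the $d$-fold product system $(\Xd,\Tsd)$, where $\Tsd = T\times T^2\times\dotsb\times T^d$, using enveloping semigroups. The product map $\pi^{\times d}\colon (\Xd,\Tsd)\to(Y^d, S\times\dotsb\times S^d)$ is a factor map and therefore induces a continuous surjective homomorphism of right-topological semigroups $\pi_*\colon E(\Xd,\Tsd) \to E(Y^d, S\times\dotsb\times S^d)$. The goal is to exhibit a net $k_\gamma\to\infty$ with $T^{ik_\gamma}x\to x$ simultaneously for every $i\in\{1,\ldots,d\}$.

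First, using recurrence of $\mathbf{y}$ I would build an idempotent $u$ in the \emph{asymptotic part} $E'_\infty := \bigcap_{k\geq 1}\overline{\{S^n\times\dotsb\times S^{dn}:n\geq k\}}$ of $E(Y^d,\cdot)$ satisfying $u\mathbf{y}=\mathbf{y}$. Indeed, recurrence gives $n_\alpha\to\infty$ along which $(S^{n_\alpha}\times\dotsb\times S^{dn_\alpha})\mathbf{y}\to\mathbf{y}$; passing to a convergent subnet inside $E(Y^d,\cdot)$ yields $p\in E'_\infty$ with $p\mathbf{y}=\mathbf{y}$, and the Ellis--Numakura lemma applied to the closed sub-semigroup $\overline{\{p^n:n\geq 1\}}\subset E'_\infty$ produces such a $u$.

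Next I would lift $u$ to an idempotent $\tilde u$ inside $E_\infty:=\bigcap_{k\geq 1}\overline{\{T^n\times\dotsb\times T^{dn}:n\geq k\}}\subset E(\Xd,\Tsd)$ with $\pi_*(\tilde u)=u$. The set $\pi_*^{-1}(u)\cap E_\infty$ is non-empty: lift the net $m_\beta\to\infty$ defining $u$ to the corresponding powers of $\Tsd$ and extract a convergent subnet. It is closed under composition because $u$ is idempotent, so Ellis--Numakura again furnishes the desired $\tilde u$. Setting $\mathbf{z}=\tilde u\mathbf{x}=(z_1,\ldots,z_d)$, we have $\pi^{\times d}(\mathbf{z})=u\mathbf{y}=\mathbf{y}$, so each $z_i\in\pi^{-1}(y)$, and $\tilde u\mathbf{z}=\tilde u^2\mathbf{x}=\mathbf{z}$.

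The crux is to rule out $\mathbf{z}\neq\mathbf{x}$. Since $\tilde u\in E_\infty$, we may write $\tilde u = \lim_\gamma (T^{k_\gamma}\times\dotsb\times T^{dk_\gamma})$ with $k_\gamma\to\infty$. Then for each $i$ both $T^{ik_\gamma}x\to z_i$ and $T^{ik_\gamma}z_i\to z_i$, so $\rho(T^{ik_\gamma}x,T^{ik_\gamma}z_i)\to 0$; consequently the pair $(x,z_i)$ is proximal under $T$. If some $z_i\neq x$, this contradicts the distality hypothesis on $\pi^{-1}(y)\setminus\{x\}$; hence $\mathbf{z}=\mathbf{x}$, and $(T^{k_\gamma}\times\dotsb\times T^{dk_\gamma})\mathbf{x}\to\mathbf{x}$ with $k_\gamma\to\infty$ yields the required recurrence of $\mathbf{x}$ under $\Tsd$. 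The main delicate point is the bookkeeping that keeps every idempotent genuinely inside the asymptotic layer $E_\infty$ rather than collapsing to the identity; it is the combination ``$\tilde u\in E_\infty$ and $\tilde u\mathbf{x}=\mathbf{x}$'' that encodes the desired recurrence along times $k_\gamma\to\infty$.
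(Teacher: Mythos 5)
Your argument is correct and follows essentially the same route as the paper: pass to the $d$-fold product system, use the induced homomorphism of enveloping semigroups to find an idempotent lying over one that fixes $(y,\dotsc,y)$, note that the coordinates of its image of $(x,\dotsc,x)$ lie in $\pi^{-1}(y)$ and are proximal to $x$, and let the distality hypothesis force them all to equal $x$; your explicit bookkeeping of the asymptotic part $E_\infty$ is precisely what the paper delegates to the citation of \cite[Proposition~2.4]{AAG}. The one small slip is applying Ellis--Numakura to $\overline{\{p^n:n\geq 1\}}$ --- in a right-topological semigroup the closure of a cyclic subsemigroup need not be a subsemigroup, since left multiplication by the limit point $p$ need not be continuous --- so apply it instead to the closed subsemigroup $\{q\in E'_\infty : q\mathbf{y}=\mathbf{y}\}$, which is the standard form of the cited fact.
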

\begin{proof}
Let $\pi_d=\pi\times \pi\times \dotsb \times \pi\colon (X^d, T\times T^2\times \dotsb \times T^d)\rightarrow
(Y^d, S \times  S^2\times \dotsb \times S^d)$. Then $\pi_d$ is a factor map.
There exists a unique onto homomorphism
$\theta \colon E(X^d, T\times T^2\times \dotsb \times T^d) \to E(Y^d, S  \times S^2\times \dotsb \times S^d)$ such
that $\pi_d(pz)=\theta(p)\pi_d(z)$ for any $p\in E(X^d, T\times \ldots \times T^d)$ and $z\in X^d$ (see Theorem~3.7 in \cite{Aus}).
Since $(y,\ldots, y)$ is recurrent under the action of $S \times S^2\times \dotsb \times S^d$,
by \cite[Proposition~2.4]{AAG} there is an idempotent $u\in E(Y^d, S  \times S^2\times \dotsb \times S^d)$
such that $u(y,\ldots,y)=(y,\ldots,y)$. If we denote $J=\theta^{-1}(u)$
then clearly it is a closed subsemigroup of $E(X^d, T\times T^2\times \dotsb \times T^d)$
and so by Ellis-Numakura Lemma there is an idempotent $v\in J$.

Observe that
\[
\pi_d(v(x,\ldots,x))=\theta(v)\pi_d(x,\ldots,x)=u(y,\ldots,y)=(y,\ldots,y),
\]
hence each coordinate of $v(x,\ldots,x)$ belongs to $\pi^{-1}(y)$.
Furthermore, since $v$ is an idempotent,
we have $v(v(x,\ldots,x))=v(x,\ldots,x)$,
thus again by \cite[Proposition~2.4]{AAG} we obtain that $v(x,\ldots,x)$ and $(x,\ldots,x)$
are proximal under $T\times T^2\times \dotsb \times T^d$,
and therefore each coordinate of $v(x,\ldots,x)$ is proximal with $x$ (under the action of $T$).
But the pair $(x,z)$ is distal for any $z\in \pi^{-1}(y)$ with $z\neq x$,
which immediately implies that $v(x,\ldots,x)=(x,\ldots,x)$.
Since $v$ is an idempotent,
it is equivalent to say that $(x,\ldots,x)$ is recurrent under
$T\times T^2\times \dotsb \times T^d$ which ends the proof.
\end{proof}

\begin{cor}
Let $\pi\colon (X,T)\to (Y,S)$ be a factor map.
If $\pi$ is distal, then a point $x\in X$ is multi-recurrent if and only if so is $\pi(x)$.
\end{cor}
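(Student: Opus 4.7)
The corollary is an immediate consequence of the theorem proved immediately before it, together with Proposition~\ref{prop:multi-rec-lift}\eqref{prop:multi-rec-lift:1}, so my plan is to do essentially nothing beyond unpacking the definition of a distal factor map. I would not try to reprove anything about enveloping semigroups; the heavy lifting has already been done.

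For the ``only if'' direction, I would quote Proposition~\ref{prop:multi-rec-lift}\eqref{prop:multi-rec-lift:1} verbatim: if $x$ is multi-recurrent, then because $\pi$ is continuous and commutes with the dynamics, $\pi(x)$ is multi-recurrent. This is one sentence.

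For the ``if'' direction, I would fix $x \in X$ and set $y = \pi(x)$, assumed to be multi-recurrent. The definition of a distal factor map says that whenever $x_1, x_2 \in X$ satisfy $\pi(x_1) = \pi(x_2)$ and $x_1 \neq x_2$, the pair $(x_1, x_2)$ is distal under $T$. Applied to our situation, this yields that for every $z \in \pi^{-1}(y)$ with $z \neq x$, the pair $(x, z)$ is distal. Hence the hypotheses of the preceding theorem are met, and that theorem delivers the conclusion that $x$ is multi-recurrent. (In fact the preceding theorem gives the stronger statement that this lifting holds at each finite level $d$, but here I only need the full multi-recurrent version.)

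There is no real obstacle; the only thing to watch is bookkeeping of quantifiers, namely checking that the distality hypothesis of the preceding theorem, which is stated for a single fiber $\pi^{-1}(y)$, is automatically furnished by the assumption that $\pi$ itself is a distal factor map (which is a uniform statement across all fibers). This is just the definition.
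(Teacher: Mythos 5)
Your proposal is correct and is exactly the argument the paper intends: the paper states this corollary without proof precisely because it follows immediately from Proposition~\ref{prop:multi-rec-lift}\eqref{prop:multi-rec-lift:1} for one direction and from the preceding theorem (with the distality of $\pi$ supplying the fiberwise distality hypothesis) for the other. Nothing is missing.
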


\subsection{The measure of multi-recurrent points}
It follows from the Poincar\'e recurrence theorem that
almost every point is recurrent for any invariant measure
(see \cite[Theorem 3.3]{F81}). A similar connection holds between
multi-recurrent points and multiple recurrence in ergodic theory.

\begin{thm}\label{thm:almostallmr}
Let $(X,T)$ be a dynamical system and  $\mu$ be a $T$-invariant Borel probability measure on $X$.
Then $\mu$-almost every point of $X$ is multi-recurrent for $T$.
\end{thm}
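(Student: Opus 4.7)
The plan is to deduce the measure-theoretic statement from Furstenberg's Multiple Recurrence Theorem by a standard ``bad set'' argument against a countable base for the topology.

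First I would fix a countable base $\{U_k\}_{k\in\N}$ of the topology of $X$. For each $k\in\N$ and each $d\in\N$, introduce the set of points of $U_k$ that admit some multi-return into $U_k$ at level $d$:
\[
A_{k,d}=\bigcup_{n\geq 1}\bigl(U_k\cap T^{-n}U_k\cap T^{-2n}U_k\cap\dotsb\cap T^{-dn}U_k\bigr),
\]
and let $B_{k,d}=U_k\setminus A_{k,d}$. The heart of the proof is to show $\mu(B_{k,d})=0$ for all $k,d$. Suppose for contradiction that $\mu(B_{k,d})>0$. Then the Multiple Recurrence Theorem, applied to the Borel set $B_{k,d}$, yields some $n\geq 1$ with
\[
\mu\bigl(B_{k,d}\cap T^{-n}B_{k,d}\cap T^{-2n}B_{k,d}\cap\dotsb\cap T^{-dn}B_{k,d}\bigr)>0,
\]
so we can pick a point $x$ in this intersection. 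Since $B_{k,d}\subset U_k$, this $x$ lies in $U_k$ and satisfies $T^{in}x\in U_k$ for all $i=1,\dotsc,d$, which forces $x\in A_{k,d}$, contradicting $x\in B_{k,d}$.

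Next I would form the $\mu$-null set $N=\bigcup_{k,d\in\N}B_{k,d}$ and argue that every $x\in X\setminus N$ is multi-recurrent. Given such an $x$, any $d\geq 1$ and any open neighborhood $V$ of $x$, choose a basic open $U_k$ with $x\in U_k\subset V$. Then $x\notin B_{k,d}$ yields some $n\geq 1$ with $T^{in}x\in U_k\subset V$ for every $i=1,\dotsc,d$. Selecting a sequence $V_m\ni x$ of open neighborhoods with diameters tending to $0$ and applying this for each $V_m$ produces positive integers $n_m$ with $\max_{1\le i\le d}\rho(T^{in_m}x,x)\to 0$; after passing to a subsequence we may assume $\{n_m\}$ is strictly increasing (if infinitely many $n_m$ agree with some $n$, then $T^{in}x=x$ for $i=1,\dotsc,d$ and the arithmetic progression $\{kn\}_{k\geq1}$ serves the purpose). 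Hence $x$ is multi-recurrent, which completes the argument.

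The only real obstacle is the first step: one must apply the Multiple Recurrence Theorem not to $U_k$ itself but to the possibly complicated Borel set $B_{k,d}$ of ``obstinate'' points, and then notice that a multi-return within $B_{k,d}$ immediately contradicts the definition of $B_{k,d}$. Once this self-improvement is observed, everything else is routine bookkeeping with a countable base and a diagonal selection of neighborhoods.
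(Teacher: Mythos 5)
Your proposal is correct and follows essentially the same route as the paper: a countable base, the ``bad set'' of points in a basic open set that never multi-return to it, and an application of Furstenberg's Multiple Recurrence Theorem to that bad set to force a contradiction. You merely make explicit two steps the paper leaves implicit (applying the theorem to $B_{k,d}$ rather than to $U_k$, and upgrading the returns to a strictly increasing sequence), which is fine.
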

\begin{proof}
Choose a countable base $\{B_i\}_{i=1}^\infty$ for topology of $X$.
For every $i\in\N$, let
\[A_i=\bigcup_{d=1}^\infty
\Bigl(B_i \setminus \bigcup_{n=1}^\infty B_i\cap T^{-n}B_i\cap T^{-2n}B_i\cap \dotsb\cap T^{-dn}B_i\Bigr).\]
Note that a point $x$ is not multi-recurrent if and only if
there exist $d\geq 1$ and $i\in\N$ such that $x\in B_i$ but
 $x\not\in B_i\cap T^{-n}B_i\cap \dotsb\cap T^{-dn}B_i$ for all $n\in\N$.
Therefore $\bigcup_{i=1}^\infty A_i$ is the collection of non-multi-recurrent points of $(X,T)$.
By the Multiple Recurrence Theorem, $\mu(A_i)=0$ for every $i\geq 1$.
Then $\mu(\bigcup_{i=1}^\infty A_i)=0$.
\end{proof}

\begin{cor}
If a dynamical system $(X,T)$ admits  an ergodic invariant Borel probability measure $\mu$ with full support,
then there exists a dense $G_\delta$ subset $X_0$ of $X$ with full $\mu$-measure such that
every point in $X_0$ is both transitive and multi-recurrent.
\end{cor}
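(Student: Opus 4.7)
The plan is to exhibit $X_0$ explicitly as the intersection of the set of transitive points with the set of multi-recurrent points, and then verify the three required properties (being $G_\delta$, having full $\mu$-measure, being dense) using results already established in the paper together with the ergodic theorem.

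First I would handle transitivity. Since $\mu$ is ergodic and fully supported, the Birkhoff ergodic theorem applied to $\chi_U$ for each member $U$ of a countable base of the topology of $X$ yields a $\mu$-conull set on which the frequency of visits of the orbit to $U$ equals $\mu(U)>0$. Intersecting over a countable base, I obtain that $\mu$-almost every point is transitive, i.e. $\mu(\Tran(X,T))=1$. In particular $(X,T)$ itself is transitive, so $\Tran(X,T)$ is a dense $G_\delta$ subset of $X$ by the remark in the preliminaries.

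Next, let $M$ denote the set of multi-recurrent points of $(X,T)$. By Lemma~\ref{lem:G-delta}\eqref{lem:G-delta:1}, $M$ is a $G_\delta$ subset of $X$, and by Theorem~\ref{thm:almostallmr} applied to the invariant measure $\mu$, we have $\mu(M)=1$. Finally, I would set $X_0 = \Tran(X,T) \cap M$. This is $G_\delta$ as an intersection of two $G_\delta$ sets and satisfies $\mu(X_0)=1$. Density of $X_0$ follows from the fact that $\mu$ has full support: every nonempty open subset $U$ of $X$ satisfies $\mu(U)>0$, so $\mu(X_0 \cap U) = \mu(U)>0$, and in particular $X_0 \cap U \neq \emptyset$. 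By construction every point of $X_0$ is simultaneously transitive and multi-recurrent, completing the proof.

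There is no serious obstacle here: the corollary is a direct packaging of Lemma~\ref{lem:G-delta}, Theorem~\ref{thm:almostallmr}, and the standard ergodic-theoretic fact that a fully supported ergodic measure forces $\mu$-a.e.\ point to be transitive. The only mildly non-formal ingredient is the last step, and even there the full-support hypothesis makes the passage from full measure to density immediate.
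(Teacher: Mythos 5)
Your proof is correct and follows essentially the same route as the paper: both take $X_0$ to be the intersection of the transitive points with the multi-recurrent points, invoking Lemma~\ref{lem:G-delta} and Theorem~\ref{thm:almostallmr} for the latter set and ergodicity plus full support for the former. You merely spell out two steps the paper leaves implicit (the Birkhoff-theorem argument for $\mu(\Tran(X,T))=1$ and the passage from full measure to density via full support), which is fine.
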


\begin{proof}
Since $\mu$ is ergodic, then the set of all transitive points is a dense $G_\delta$ subset of $X$ and has full $\mu$-measure.
By Lemma~\ref{lem:G-delta} and Theorem \ref{thm:almostallmr},
the set of all multi-recurrent point is also a dense $G_\delta$ subsets of $X$ and has full $\mu$-measure.
Then the intersection of those two sets is as required.
\end{proof}

Using results on multiple recurrence developed by Furstenberg in \cite{F77},
we strengthen Theorem~\ref{thm:almostallmr} as follows.

\begin{thm}\label{thm:measure-density-multi-rec}
Let $(X,T)$ be a dynamical system.
For every $T$-invariant Borel probability measure $\mu$ on $X$,
there exists a Borel subset $X_0$ of $X$ with $\mu(X_0)=1$ such that
for every $x\in X_0$, every $d\in\N$ and every neighborhood $U$ of $x$
the set $N_{T\times T^2\times\ldots \times T^d}((x,\ldots,x),U\times\dotsb \times U)$ has positive upper density.
\end{thm}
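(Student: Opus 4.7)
The plan is to upgrade the measure-theoretic Multiple Recurrence Theorem to a pointwise statement by fixing a countable base of $X$, controlling exceptional sets along the base, and sharpening ``positive measure of good $x$'' to ``$\mu$-almost every $x$'' by a self-application of Furstenberg's inequality.

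First I would fix a countable base $\set{B_j}_{j=1}^\infty$ of the topology of $X$ and, for each pair $(j,d)$ with $\mu(B_j)>0$, introduce the \emph{bad set}
\[
Z_{j,d} := \set{x\in B_j : \overline{d}\bigl(\set{n\in\N : T^{in}x\in B_j\text{ for all }i=1,\dotsc,d}\bigr)=0}.
\]
Once we show $\mu(Z_{j,d})=0$ for every admissible $(j,d)$, we put $X_0:=\supp(\mu)\setminus\bigcup_{j,d}Z_{j,d}$, which has $\mu(X_0)=1$. Given $x\in X_0$ and a neighborhood $U$ of $x$, pick $B_j$ with $x\in B_j\subset U$; since $x\in\supp(\mu)$, $\mu(B_j)>0$, and since $x\notin Z_{j,d}$, the return-time set into $B_j$---and \emph{a fortiori} into $U$---under $T\times T^2\times\dotsb\times T^d$ has positive upper density, yielding the theorem.

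The analytic input is Furstenberg's quantitative Multiple Recurrence Theorem from \cite{F77}: whenever $A\in\mathcal{B}(X)$ satisfies $\mu(A)>0$,
\[
\liminf_{N\to\infty}\frac{1}{N}\sum_{n=1}^N\mu(A\cap T^{-n}A\cap\dotsb\cap T^{-dn}A)>0.
\]
By Fubini the $N$th average equals $\int g_N^A\,d\mu$, where
\[
g_N^A(x):=\frac{1}{N}\sum_{n=1}^N\mathbf{1}_A(x)\mathbf{1}_A(T^nx)\dotsb\mathbf{1}_A(T^{dn}x)\in[0,1].
\]
Since the $g_N^A$ are uniformly bounded, the reverse Fatou lemma yields $\int\limsup_N g_N^A\,d\mu>0$, so a $\mu$-positive subset of $A$ consists of points $x$ for which $\overline{d}(\set{n : T^{in}x\in A\text{ for all }i=1,\dotsc,d})>0$.

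The main obstacle is the gap between this conclusion---positive measure of good $x$ in $A$---and the pointwise statement we need, namely good recurrence on $\mu$-almost every $x\in A$. The key move is to apply the preceding step with $A:=Z_{j,d}$: if $\mu(Z_{j,d})>0$, the above would produce some $x\in Z_{j,d}$ with $\overline{d}(\set{n:T^{in}x\in Z_{j,d}\text{ for all }i=1,\dotsc,d})>0$, and since $Z_{j,d}\subset B_j$ the same inequality would hold with $B_j$ in place of $Z_{j,d}$, contradicting the very definition of $Z_{j,d}$. Hence $\mu(Z_{j,d})=0$ for each $(j,d)$, and the construction of $X_0$ above delivers the required set of full measure.
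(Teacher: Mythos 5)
Your argument is correct and is essentially the paper's own proof: both hinge on Furstenberg's quantitative Multiple Recurrence Theorem combined with the reverse Fatou lemma (valid since the averages are bounded by $1$) to extract a positive-measure set of points whose diagonal return times have positive upper density, and both apply this to the putative bad set to force a contradiction. The differences are only organizational --- you index bad sets by a countable base and intersect with $\supp(\mu)$, whereas the paper works with diameter-$\delta$ sets $A_{d,\delta}$ and passes through the ergodic decomposition --- but the underlying mechanism is identical.
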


\begin{proof}
For every $d\in\N$ and every $\delta>0$, let $A_{d,\delta}$ be the collection of all points $x\in X$
for which there exists a neighborhood $U$ of $x$ with $\diam(U)<\delta$
such that the set
\[N_{T\times T^2\times\ldots \times T^d}((x,\ldots,x),U\times \dotsb \times U)\]
has positive upper density.

Let $\mu$ be an ergodic $T$-invariant Borel probability measure on $X$.
We are going to show that $\mu(A_{d,\delta})=1$ for every $d\in\N$ and every $\delta>0$.
First we show that $A_{d,\delta}$ is Borel measurable.
To this end, for every $t>0$ and every $n,m\in\N$,
let $A_{d,\delta}(t,n,m)$ be the collection of all points $x\in X$ such that
there exists an neighborhood $U$ of $x$ with $\diam(U)<\delta$
satisfying
\[\frac{1}{n}\#\bigl(N_{T\times T^2\times\ldots \times T^d}((x,\ldots,x),U\times \dotsb \times U)\cap [0,n-1]\bigr)>t-\tfrac{1}{m}.\]
It is clear that $A_\delta(t,n,m)$ is an open subset of $X$ and
\[A_{d,\delta}=\bigcup_{k=1}^\infty \bigcap_{m=1}^\infty \bigcup_{n=m}^\infty A_{d,\delta}(\tfrac{1}{k},n,m).\]
It follows that $A_{d,\delta}$ is Borel measurable.

If $\mu(A_{d,\delta})<1$, then we can choose a Borel subset $B\subset X\setminus A_\delta$ with $\diam(B)<\delta/3$ and $\mu(B)>0$.
For any $x\in X$, let
\[g(x)=\limsup_{N\to\infty}\frac{1}{N}\sum_{i=0}^{N-1}\mathbf{1}_{B\cap T^{-i}B\cap \dotsb\cap T^{-id}B}(x).\]
Then $g$ is also Borel measurable and $0\leq g(x)\leq 1$ for any $x\in X$.
By the Fatou lemma and \cite[Theorem 7.14]{F81}, we have
\begin{align*}
\int_X g(x) d\mu(x)&\geq
\limsup_{N\to\infty}\frac{1}{N}\int_X\sum_{i=0}^{N-1}\mathbf{1}_{B\cap T^{-i}B\cap \dotsb\cap T^{-id}B}(x)d\mu(x)\\
&\geq\liminf_{N\to \infty}\frac{1}{N}\sum_{i=0}^{N-1} \mu(B\cap T^{-i}B\cap \dotsb\cap T^{-id}B)>0.
\end{align*}
Clearly $g(x)=0$ for any $x\not\in B$, hence
there exists some $x\in B$ such that $g(x)>0$.
Let $U=B(x,\tfrac{2}{3}\delta)$. Then $B\subset U$ and
the upper density of $N_{T\times T^2\times\dotsb \times T^d}((x,\ldots,x),U\times \dotsb \times U)$
is not less than $g(x)$. We obtain that  $x\in A_{d,\delta}$, which is a contradiction.

Therefore $\mu(A_{d,\delta})=1$ for every ergodic measure $\mu$, every $d\in\N$ and every $\delta>0$.
Let
\[X_0=\bigcap_{d=1}^\infty\bigcap_{k=1}^\infty A_{d,\frac{1}{k}}.\]
Then $\mu(X_0)=1$ for every ergodic measure, and by the ergodic decomposition the same holds for any $T$-invariant measure. Therefore $X_0$ is as required.
\end{proof}

\begin{rem}
Assume that pointwise convergence of multiple averages holds for $\mu$, that is,
for every $d\in\N$ and $f_1,f_2,\dotsc,f_d\in L^\infty(\mu)$,
\[\frac{1}{N}\sum_{n=0}^{N-1} f_1(T^nx) f_2(T^{2n}x)\dotsb f_d(T^{dn}x) \text{ converges } \mu\ a.e..\]
Then the proof of Theorem~\ref{thm:measure-density-multi-rec} can be modified by replacing $\limsup$ in the definition of $g$ by $\liminf$, and the modified proof yields that
for every $x\in X_0$, every $d\in\N$ and every neighborhood $U$ of $x$
the set $N_{T\times T^2\times\ldots \times T^d}((x,\ldots,x),U\times\dotsb \times U)$ has positive lower density.
Unfortunately, the pointwise convergence of multiple averages for general ergodic measures is still an open problem.
It was proved recently that the pointwise convergence of multiple averages holds for distal measures (see \cite{HSY2014}).
\end{rem}

Glasner proved in \cite{G94} that if a minimal system $(X,T)$ is topologically weakly mixing,
then there is a dense $G_\delta$ subset $X_0$ such that for each $x\in X_0$,
the orbit of $(x,\ldots,x)$ is dense in $X^d$ under
$T\times T^2\times\ldots \times T^d$. Below we present an analogous result
for systems possessing a fully weakly mixing invariant measure.
Note that Lehrer \cite{L87} proved a variant of the Jewett-Krieger theorem,
which implies that there are topologically weakly mixing minimal systems
without weakly mixing invariant measures. Therefore our result complements Glasner's theorem.

\begin{thm}\label{thm:weak-mixing-muli-rec}
Let $(X,T)$ be a dynamical system.
If there exists a weakly mixing, fully supported $T$-invariant Borel probability
measure $\mu$ on $X$, then
there exists a Borel subset $X_0$ of $X$ with $\mu(X_0)=1$ such that
for every $x\in X_0$, every $d\in\N$, and every non-empty open subsets $U_1,U_2,\dotsc,U_d$ of $X$
the set
\[
N_{T\times T^2\times\ldots \times T^d}\big((x,x,\ldots,x),U_1\times U_2\times\dotsb \times U_d\big)
\]
has positive upper density.
\end{thm}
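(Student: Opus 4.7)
The plan is to mimic the proof of Theorem~\ref{thm:measure-density-multi-rec}, with one crucial upgrade: instead of merely invoking Furstenberg's Multiple Recurrence Theorem, I would use Furstenberg's much stronger result that for a weakly mixing measure-preserving system $(X,\mathcal B(X),\mu,T)$ and any $f_1,\dotsc,f_d\in L^\infty(\mu)$ the multiple averages
\[
\frac{1}{N}\sum_{n=0}^{N-1}\prod_{i=1}^{d} f_i\circ T^{in}
\]
converge in $L^2(\mu)$ to the constant $\prod_{i=1}^{d}\int f_i\,d\mu$. Together with the full support hypothesis, which gives $\mu(U)>0$ for every nonempty open $U$, this will be enough to establish the pointwise positive-upper-density condition on a set of full measure.

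Concretely, I would fix a countable base $\{B_k\}_{k=1}^\infty$ for the topology of $X$. For every $d\in\N$ and every tuple $(k_1,\dotsc,k_d)\in\N^d$ define the Borel function
\[
g_{d,k_1,\dotsc,k_d}(x)=\limsup_{N\to\infty}\frac{1}{N}\sum_{n=0}^{N-1}\mathbf{1}_{B_{k_1}}(T^n x)\mathbf{1}_{B_{k_2}}(T^{2n}x)\dotsb\mathbf{1}_{B_{k_d}}(T^{dn}x),
\]
whose value is precisely the upper density of the visit set of $(x,\dotsc,x)$ to $B_{k_1}\times\dotsb\times B_{k_d}$ under $T\times T^2\times\dotsb\times T^d$. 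Applying the weakly mixing multiple ergodic theorem with $f_i=\mathbf{1}_{B_{k_i}}$, these averages converge in $L^2(\mu)$ to $\prod_{i=1}^{d}\mu(B_{k_i})$, which is strictly positive by full support. Since $L^2$-convergence yields a subsequence converging $\mu$-almost everywhere, we conclude $g_{d,k_1,\dotsc,k_d}(x)\ge\prod_{i=1}^{d}\mu(B_{k_i})>0$ for $\mu$-a.e.~$x$.

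Now set
\[
X_0=\bigcap_{d=1}^{\infty}\bigcap_{(k_1,\dotsc,k_d)\in\N^d}\bigl\{x\in X\colon g_{d,k_1,\dotsc,k_d}(x)>0\bigr\},
\]
a countable intersection of Borel sets of full $\mu$-measure, hence itself Borel and of full $\mu$-measure. For any $x\in X_0$, any $d\ge 1$, and any nonempty open $U_1,\dotsc,U_d\subset X$, choose basic open sets with $B_{k_i}\subseteq U_i$; then
\[
N_{T\times T^2\times\dotsb\times T^d}\bigl((x,\dotsc,x),B_{k_1}\times\dotsb\times B_{k_d}\bigr)\subseteq N_{T\times T^2\times\dotsb\times T^d}\bigl((x,\dotsc,x),U_1\times\dotsb\times U_d\bigr),
\]
and the left-hand side has positive upper density by the definition of $X_0$.

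The main obstacle is the appeal to Furstenberg's $L^2$-convergence theorem for multiple ergodic averages under weak mixing; after this classical input, the remainder amounts to the standard $L^2$-to-pointwise subsequence trick and a countable-union bookkeeping. No ergodic decomposition is needed here since a weakly mixing measure is automatically ergodic, so the full-measure conclusion is obtained directly for the given $\mu$.
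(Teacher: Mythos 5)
Your proposal is correct, and it rests on exactly the same analytic engine as the paper's proof: Furstenberg's theorem that for a weakly mixing system the multiple averages $\frac{1}{N}\sum_{n=0}^{N-1}\prod_{i=1}^{d}f_i\circ T^{in}$ converge in $L^2(\mu)$ to $\prod_{i=1}^{d}\int f_i\,d\mu$ (this is \cite[Theorem~2.2]{F77}, the same reference the paper invokes), combined with full support to make that limit positive. Where you differ is in the bookkeeping, and your route is the more economical one. The paper argues by contradiction: it fixes a finite open cover of mesh $\delta$, enumerates the $p^d$ tuples $\alpha_1,\dotsc,\alpha_k$, and inductively shrinks a positive-measure set $W_0\supset W_1\supset\dotsb\supset W_k$, handling one tuple at a time via a $\limsup$-function $g_i$ localized by the factor $\mathbf{1}_{W_0}$ at time zero; it then has to verify separately that the sets $A_{d,\delta}$ are Borel. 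You instead work directly with a countable base, note that each $g_{d,k_1,\dotsc,k_d}$ is manifestly Borel as a $\limsup$ of Borel functions, extract an a.e.-convergent subsequence from the $L^2$-convergence to get the explicit lower bound $g_{d,k_1,\dotsc,k_d}\ge\prod_{i=1}^{d}\mu(B_{k_i})>0$ a.e., and finish with a single countable intersection; the final reduction from arbitrary open $U_i$ to basic $B_{k_i}\subseteq U_i$ replaces the paper's small-mesh-cover argument. Your approach avoids both the contradiction and the induction, and yields a quantitative lower bound on the upper density. Two trivial points to make explicit: discard any empty members of the base (so that $\mu(B_{k_i})>0$ really holds), and note, as you do, that no ergodic decomposition is needed because a weakly mixing measure is ergodic --- the paper likewise does not decompose here, in contrast to its Theorem~\ref{thm:measure-density-multi-rec}.
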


\begin{proof}
For every $d\in\N$ and every $\delta>0$, let $A_{d,\delta}$ be the collection of all points $x\in X$
such that there exists an open cover $\{U_i\}_{i=1}^\ell$ of $X$ with $\diam(U_i)<\delta$ for $i=1,\dotsc,\ell$ and
such that for every $\alpha\in \{1,2,\dotsc,\ell\}^d$ the set
$N_{T\times T^2\times\ldots \times T^d}((x,x,\ldots,x),U_{\alpha(1)}\times U_{\alpha(2)}
\times\dotsb \times U_{\alpha(d)})$
has positive upper density.

Following the same lines as in the proof of Theorem~\ref{thm:measure-density-multi-rec}
we obtain that $A_{d,\delta}$ is Borel measurable.
We are going to show that $\mu(A_{d,\delta})=1$.

If $\mu(A_{d,\delta})<1$, there exists a Borel set $W_0\subset X\setminus A_\delta$
with $\diam (W_0)<\delta/2$ and $\mu(W_0)>0$.
Fix an open cover $\{U_i\}_{i=1}^p$ of $X$ with $\diam(U_i)<\delta$ for $i=1,\dotsc,\ell$.
Enumerate  $\{1,2,\dotsc,p\}^d$ as $\{\alpha_1,\alpha_2,\dotsc,\alpha_k\}$ with $k=p^d$.

First note that $\mu(U_j)>0$ for $i=1,2,\dotsc,\ell$ since $\mu$ has the full support.
For every $x\in X$, let
\[g_1(x)=\limsup_{N\to\infty}\frac{1}{N}\sum_{l=0}^{N-1}
\mathbf{1}_{{W_0}\cap T^{-l}U_{\alpha_1(1)}\cap \ldots\cap T^{-ld}U_{\alpha_1(d)}}(x).\]
Then $g_1$ is also Borel measurable and $0\leq g_1(x)\leq 1$ for any $x\in X$.
The measure $\mu$ is weakly mixing, hence we can apply \cite[Theorem~2.2]{F77} obtaining that
\[
\lim_{N\to \infty}\frac{1}{N}\sum_{l=1}^{N}\mathbf{1}_{{W_0}\cap T^{-l}U_{\alpha_1(1)}\cap \ldots\cap T^{-ld}U_{\alpha_1(d)}}(x)=\mathbf{1}_{W_0}(x) \prod_{l=1}^d\mu(U_{\alpha_1(l)})
\]
in $L^2(X)$. In particular $\int_X g_1(x)d\mu>0$.
Clearly $g_1(x) = 0$ for any $x\not\in W_0$.
Then there exists a Borel set $W_1\subset W_0$ with $\mu(W_1)>0$ and $g_1(x)>0$ for any $x\in W_1$.
Note that for every $x\in W_1$ the upper density of
$N_{T\times T^2\times\ldots \times T^d}((x,x,\ldots,x),U_{\alpha_1(1)}\times U_{\alpha_1(2)}
\times\dotsb \times U_{\alpha_1(d)})$ is not less than $g_1(x)$.

Working by induction, for every $i=1,2,\dotsc,k$, we can construct a Borel set $W_i\subset W_{i-1}$
with $\mu(W_i)>0$ such that for every $x\in W_i$ the set
$N_{T\times T^2\times\ldots \times T^d}((x,x,\ldots,x),U_{\alpha_i(1)}\times U_{\alpha_i(2)}
\times\dotsb \times U_{\alpha_i(d)})$ has positive upper density.
This implies that for every $x\in W_k$ and every $\alpha\in \{1,2,\dotsc,\ell\}^d$ the set
$N_{T\times T^2\times\ldots \times T^d}((x,x,\ldots,x),U_{\alpha(1)}\times U_{\alpha(2)}
\times\dotsb \times U_{\alpha(d)})$
has positive upper density.
Then $W_k\subset A_{d,\delta}$, which is a contradiction, hence $\mu(A_{d,\delta})=1$.

To finish the proof, it is enough to put
\[X_0=\bigcap_{d=1}^\infty\bigcap_{k=1}^\infty A_{d,\frac{1}{k}}.\]
since $\mu(X_0)=1$ and $X_0$ is as required.
\end{proof}

\begin{rem}
One can modify the proof of Theorem~\ref{thm:weak-mixing-muli-rec},
by replacing $A_{d,\delta}$ by $A_{d,\delta}'$ defined as the collection of all points $x\in X$
such that there exists an open cover $\{U_i\}_{i=1}^\ell$ of $X$ with $\diam(U_i)<\delta$ for $i=1,\dotsc,\ell$
for which the set
\[
N_{T\times T^2\times\ldots \times T^d}((x,x,\ldots,x),U_{\alpha(1)}\times U_{\alpha(2)}
\times\dotsb \times U_{\alpha(d)})\]
is not empty for every $\alpha\in \{1,2,\dotsc,\ell\}^d$. Then one obtains that $A_{d,\delta}'$ is a dense open subset of $X$ and
\[X_0'=\bigcap_{d=1}^\infty\bigcap_{k=1}^\infty A_{d,\frac{1}{k}}'\]
is a dense $G_\delta$ subset of $X$ with full $\mu$-measure.
Moreover, for every $d\in\N$ and every $x\in X_0'$, the orbit of $(x,x,\ldots,x)$ is dense in $X^d$ under
$T\times T^2\times\ldots \times T^d$.
Since $(X^d, T\times T^2\times\ldots \times T^d)$ is an E-system,
by \cite[Lemma 3.6]{HPY07} we know that
for every $x\in X_0'$, every $d\in\N$ and every non-empty open subsets $U_1,U_2,\dotsc,U_d$ of $X$
the set $N_{T\times T^2\times\ldots \times T^d}((x,x,\ldots,x),\allowbreak U_1\times U_2\times\dotsb \times U_d)$
has positive upper Banach density, but we cannot conclude that it has positive upper density. On the other hand, we do not know whether the set $X_0$ constructed in Theorem~\ref{thm:weak-mixing-muli-rec} is residual.
\end{rem}

\section{Van der Waerden systems and \texorpdfstring{$\AP$}{AP}-recurrent points}
In this section we introduce the concept of a van der Waerden system. We explore how this notion relates
to the behaviour of multi-recurrent points and $\AP$-recurrent points.

\begin{defn}
We say that a dynamical system $(X,T)$  is a \emph{van der Waerden system}
if it satisfies the topological multiple recurrence property, that is for every non-empty open set $U\subset X$ and
every $d\in\N$ there exists an $n\in\N$ such that
\[U\cap T^{-n}U\cap T^{-2n}U\cap \dotsb \cap T^{-dn}U\neq\emptyset.\]
\end{defn}

By the Topological Multiple Recurrence Theorem, we know that every minimal system is a van der  Waerden system.
It follows from the ergodic Multiple Recurrence Theorem that every E-system is a van der  Waerden system.

It is easy to see that if $(X,T)$ is  a van der Waerden system, then the relation
$R=\bigcap_{d=1}^\infty R_d$ is residual, where
\[R_d=\bigl\{y\in X\colon \exists\, n\geq 1\text{ such that }\rho(y,T^{in}y)<\tfrac{1}{d} \text{ for }i=0,1,\ldots,d\}.\]
As a corollary, we obtain the following (cf. Lemma~\ref{lem:G-delta}).

\begin{lem}\label{lem:vdW-AP}
A dynamical system $(X,T)$ is  a van der Waerden system
if and only if it has a dense set of multi-recurrent points.
\end{lem}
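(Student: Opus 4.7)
The plan is to verify both implications using essentially what has already been set up: the sets $R_d$ defined in the proof of Lemma~\ref{lem:G-delta} and the observation preceding the current lemma.

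For the forward direction, suppose $(X,T)$ is van der Waerden. First I would recall that for each $d\geq 1$, the set
\[R_d=\bigl\{y\in X\colon \exists\, n\geq 1\text{ such that }\rho(y,T^{in}y)<\tfrac{1}{d} \text{ for }i=0,1,\ldots,d\bigr\}\]
is open in $X$ (by continuity of $T,T^2,\dotsc,T^d$ and the fact that $R_d$ is a countable union over $n$ of open sets). The key step is to show each $R_d$ is dense. Given any nonempty open $V\subset X$, choose a nonempty open $U\subset V$ with $\diam U<1/d$. The van der Waerden property yields some $n\in\N$ and some $y\in U$ with $T^{in}y\in U$ for $i=0,1,\ldots,d$, so $\rho(y,T^{in}y)\le\diam U<1/d$, hence $y\in R_d\cap V$. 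By the Baire Category Theorem applied in the compact metric space $X$, the intersection $R=\bigcap_{d=1}^\infty R_d$ is residual, hence dense. Lemma~\ref{lem:G-delta}\eqref{lem:G-delta:1} identifies $R$ as exactly the set of multi-recurrent points, completing this direction.

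For the converse, suppose the multi-recurrent points are dense in $X$. Fix a nonempty open set $U\subset X$ and $d\in\N$. Density gives a multi-recurrent point $x\in U$. Applying the definition of multi-recurrence (in the equivalent form stated just after Definition~\ref{def:multirec}) to the neighborhood $U$ of $x$, we obtain $n\in\N$ with $T^{in}x\in U$ for $i=1,2,\dotsc,d$; since $x\in U$ already, we conclude $x\in U\cap T^{-n}U\cap T^{-2n}U\cap\dotsb\cap T^{-dn}U$, so this intersection is nonempty and $(X,T)$ is a van der Waerden system.

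There is no real obstacle here: both directions reduce to short arguments relying only on material already established in the paper, namely the openness of each $R_d$, the characterization of multi-recurrence via these sets from Lemma~\ref{lem:G-delta}, and the Baire Category Theorem. The same equivalence was essentially announced in the introduction, and the present lemma just records it formally.
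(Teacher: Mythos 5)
Your proof is correct and follows essentially the same route the paper intends: the paper states just before the lemma that for a van der Waerden system the set $R=\bigcap_{d=1}^\infty R_d$ is residual and derives the lemma as a corollary of this together with Lemma~\ref{lem:G-delta}, and your argument simply fills in the openness/density of each $R_d$ and the easy converse. No gaps.
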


By Lemmas ~\ref{lem:vdW-AP} and \ref{lem:multi-rec-n}, we have the following result.
\begin{prop}\label{prop:vdW-n}
Let $(X,T)$ be a dynamical system.
Then the following conditions are equivalent:
\begin{enumerate}
\item $(X,T)$ is a van der Waerden system;
\item $(X,T^n)$ is a van der Waerden system for some $n\in\N$;
\item $(X,T^n)$ is a van der Waerden system for any $n\in\N$.
\end{enumerate}
\end{prop}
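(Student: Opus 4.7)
The plan is to reduce all three conditions to the single statement that the set of multi-recurrent points of $(X,T)$ is dense in $X$. By Lemma~\ref{lem:vdW-AP}, condition (1) is equivalent to density of the multi-recurrent points of $(X,T)$. Applied to the system $(X,T^n)$, the same lemma says that $(X,T^n)$ is a van der Waerden system iff the multi-recurrent points of $(X,T^n)$ are dense in $X$. So conditions (2) and (3) translate, respectively, into the density of multi-recurrent points of $(X,T^n)$ for some $n$, and for every $n$.

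The key observation is then Lemma~\ref{lem:multi-rec-n}: a point $x\in X$ is multi-recurrent for $(X,T)$ if and only if it is multi-recurrent for $(X,T^n)$ for some (equivalently, every) $n\in\N$. Consequently, the set of multi-recurrent points is the same set in $X$ regardless of whether it is computed with respect to $T$ or any of its powers $T^n$. In particular, this set is dense in $X$ under $T$ iff it is dense under $T^n$ for some $n$ iff it is dense under $T^n$ for every $n$.

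Combining the two equivalences above, conditions (1), (2) and (3) are all equivalent to the same assertion (density of multi-recurrent points of $(X,T)$), so they are equivalent to each other. There is no real obstacle here — the content of the proposition is packaged entirely in the two cited lemmas; the proof is a one-line synthesis, and the only thing to check is that Lemma~\ref{lem:vdW-AP} and Lemma~\ref{lem:multi-rec-n} are applied in the correct direction in each of the three cases. Formally I would just state the chain of equivalences
\[
(1)\iff \{\text{multi-rec.\ points of }(X,T)\}\text{ is dense}\iff \{\text{multi-rec.\ points of }(X,T^n)\}\text{ is dense}\iff (3),
\]
with (3)$\Rightarrow$(2)$\Rightarrow$(1) obtained by specializing $n$ and invoking Lemma~\ref{lem:vdW-AP} one more time.
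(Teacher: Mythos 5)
Your proof is correct and matches the paper's approach exactly: the paper derives Proposition~\ref{prop:vdW-n} precisely by combining Lemma~\ref{lem:vdW-AP} (van der Waerden system iff dense multi-recurrent points) with Lemma~\ref{lem:multi-rec-n} (the set of multi-recurrent points is the same for $T$ and for every power $T^n$). Nothing is missing.
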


Lemma~\ref{lem:urMultiRec} implies that every point
in a uniformly rigid system is multi-recurrent.
Then by Lemma~\ref{lem:vdW-AP} every uniformly rigid system is a van der Waerden system.
By~\cite{AAB,GW93}, every almost equicontinuous system is uniformly rigid.
We have just proved the following.
\begin{prop}\label{rem:aeq_vdW}
Every almost equicontinuous system is also a van der Waerden system.
\end{prop}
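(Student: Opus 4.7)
The proposition is essentially a three-line composition of results already assembled in the paper, so my plan is just to chain them together carefully. The strategy is to observe that almost equicontinuity forces uniform rigidity, uniform rigidity forces every point to be multi-recurrent, and density of multi-recurrent points characterizes van der Waerden systems.

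First I would invoke the structural fact recalled in the Preliminaries, namely that if $(X,T)$ is almost equicontinuous then $(X,T)$ is uniformly rigid (this is the Akin--Auslander--Berg / Glasner--Weiss dichotomy from \cite{AAB,GW93}: an almost equicontinuous transitive system is automatically uniformly rigid, as the set of equicontinuous points coincides with the set of transitive points and uniform rigidity follows). So I get, for every $\varepsilon>0$, some $n\in\N$ with $\rho(T^n x,x)<\varepsilon$ uniformly in $x\in X$.

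Next I would apply Lemma~\ref{lem:urMultiRec}: uniform rigidity of $(X,T)$ implies that every point of $X$ is multi-recurrent. Consequently, the set of multi-recurrent points equals $X$, which is trivially dense in $X$.

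Finally I would conclude by Lemma~\ref{lem:vdW-AP}: a dynamical system is a van der Waerden system if and only if its set of multi-recurrent points is dense. Since that set is all of $X$, the system $(X,T)$ is a van der Waerden system. There is no genuine obstacle here, the only thing to be mindful of is to cite the correct auxiliary statements in the correct order; the substance of the proof has already been produced in Lemmas~\ref{lem:urMultiRec} and~\ref{lem:vdW-AP} and in the references \cite{AAB,GW93}.
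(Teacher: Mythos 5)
Your proposal is correct and follows exactly the same chain as the paper: almost equicontinuous implies uniformly rigid by \cite{AAB,GW93}, uniformly rigid implies every point is multi-recurrent by Lemma~\ref{lem:urMultiRec}, and density of multi-recurrent points gives the van der Waerden property by Lemma~\ref{lem:vdW-AP}. Nothing to add.
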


Moothathu introduced $\Delta$-transitive systems in \cite{M10}. Recall that a dynamical system $(X,T)$ is \emph{$\Delta$-transitive}
if for every $d\in\N$ there exists $x\in X$ such that the diagonal $d$-tuple
$(x,x,\dotsc,x)$ has a dense orbit under the action of $T\times T^2\times \dotsb \times T^d$.

\begin{prop} \label{prop:Delta-transitive}
If a dynamical system $(X,T)$ is $\Delta$-transitive, then it is a van der Waerden system.
\end{prop}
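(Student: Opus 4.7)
The plan is to verify the van der Waerden condition directly, using $\Delta$-transitivity once for each pair consisting of a non-empty open set $U\subset X$ and a positive integer $d$. Throughout we may assume $X$ is non-trivial, the trivial case being vacuous.

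First I would fix such $U$ and $d$ and, using $\Delta$-transitivity, pick a point $x\in X$ whose diagonal $d$-tuple $(x,x,\dotsc,x)$ has a dense orbit in $X^d$ under $S=T\times T^2\times\dotsb\times T^d$. Projecting the density to the first coordinate gives that $\{T^n x:n\in\Zp\}$ is dense in $X$, so there exists $m\in\Zp$ with $T^m x\in U$.

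Next I would exploit the fact that $(x,x,\dotsc,x)$ itself lies in $X^d$, hence in the closure of its $S$-orbit. Because $X$ is non-trivial and the orbit is dense (hence infinite), this orbit must accumulate at the diagonal along arbitrarily large times; this yields a strictly increasing sequence $\{n_k\}_{k=1}^\infty$ in $\N$ with $T^{i n_k}x\to x$ for every $i=1,2,\dotsc,d$. Applying the continuous map $T^m$ then gives $T^{i n_k+m}x=T^m(T^{i n_k}x)\to T^m x$ for each such $i$. Since $T^m x\in U$ and $U$ is open, for all sufficiently large $k$ we have $T^{i n_k+m}x\in U$ simultaneously for $i=0,1,\dotsc,d$. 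Fixing one such $k$ and setting $n=n_k$, the point $T^m x$ witnesses
\[
T^m x\in U\cap T^{-n}U\cap T^{-2n}U\cap\dotsb\cap T^{-dn}U,
\]
which is exactly the van der Waerden property for $(U,d)$.

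I do not anticipate a real obstacle; the only mild point requiring care is to ensure that the sequence $\{n_k\}$ can be chosen with $n_k\geq 1$ (so that $n=n_k$ is admissible in the definition of a van der Waerden system), which is automatic once $X$ is non-trivial, because a dense orbit in $X^d$ is then infinite and so the accumulation of the $S$-orbit at the diagonal occurs at arbitrarily large times.
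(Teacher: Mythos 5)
The weak point is the step where you assert that the $S$-orbit of $(x,\dots,x)$ ``must accumulate at the diagonal along arbitrarily large times'' because it is dense and infinite. Density plus infinitude of the orbit only guarantees accumulation at every \emph{non-isolated} point of $X^d$; it does not by itself give recurrence of the starting point. (For instance, in $X=\{0\}\cup\{2^{-n}:n\in\Zp\}$ with $T(t)=t/2$, the point $1$ has a dense infinite orbit but never returns near $1$.) So you owe an argument that $(x,\dots,x)$ is not isolated in $X^d$, equivalently that $x$ is not isolated in $X$. This can in fact be supplied when $d\geq 2$: if $x$ were isolated and $X$ non-trivial, the non-empty open set $\{x\}\times(X\setminus\{x\})\times X\times\dots\times X$ would have to meet the orbit of $(x,\dots,x)$, forcing some $n\geq 1$ with $T^{n}x=x$ and $T^{2n}x\neq x$, which is impossible; and the van der Waerden condition for $d=1$ follows from the case $d=2$. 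None of this is in your write-up, and for $d=1$ your recurrence claim is genuinely false as stated, so as written the proof has a gap.

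The detour through recurrence is also unnecessary. The paper argues directly: since the orbit of $(x,\dots,x)$ under $T\times T^2\times\dots\times T^d$ is dense in $X^d$ and $U\times U\times\dots\times U$ is a non-empty open subset of $X^d$, there is $n\in\N$ with $T^{in}x\in U$ for $i=1,\dots,d$, whence
\[
T^{n}x\in U\cap T^{-n}U\cap\dotsb\cap T^{-(d-1)n}U;
\]
as $d$ is arbitrary, this already yields the van der Waerden property. That route uses density only once, needs no discussion of isolated points, and sidesteps the recurrence issue entirely.
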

\begin{proof}
Let $U$ be a non-empty open subset of $X$ and fix any
$d\in\N$. There exists $x\in X$ such that  diagonal $d$-tuple
$(x,x,\dotsc,x)$ has a dense orbit under the action of $T\times T^2\times \dotsb \times T^d$.
Then there exists $n\in\N$ such that $T^nx\in U, T^{2n}x\in U,\dotsc, T^{dn}x\in U$ and thus
\[T^n x\in U\cap T^{-n}U\cap \dotsb\cap T^{-(d-1)n}U.\]
This shows that $(X,T)$ is a van der Waerden system.
\end{proof}

By Proposition~\ref{prop:multi-rec-not-lift}, multi-recurrent points may not be lifted through factor maps.
To remove this disadvantage,  we introduce the following slightly weaker notion of \emph{$\AP$-recurrent} point.
As we will see later, it is possible to characterize van der Waerden systems through $\AP$-recurrent points.

\begin{defn}
A point $x\in X$ is  \emph{$\AP$-recurrent} if $N(x,U)$ is an AP-set for every open neighborhood $U$ of $x$ .
\end{defn}

\begin{rem}
It is clear that every multi-recurrent point is $\AP$-recurrent
and every $\AP$-recurrent point is recurrent. The notion of $\AP$-recurrent points can be seen as an intermediate
notion of recurrence.
By Proposition~\ref{prop:vdW-sys-transitive}, every minimal point is $\AP$-recurrent since minimal systems
are van der Wearden systems.
But by Theorem \ref{thm:minimial-not-multi-rec} there exist some minimal points which are not multi-recurrent.
Those minimal points are $\AP$-recurrent but not multi-recurrent.
Every transitive point of the dynamical system presented in the proof of Proposition \ref{prop:mixing-not-vdW}
is not $\AP$-recurrent. So those transitive points are recurrent but not $\AP$-recurrent.
\end{rem}

\begin{lem}\label{lem:G-delta-AP}
Let $(X,T)$ be a dynamical system.
\begin{enumerate}
\item\label{lem:G-delta-AP:1} The collection of all $\AP$-recurrent points of $(X,T)$ is a $G_\delta$ subset of $X$.
\item\label{lem:G-delta-AP:2} $(X,T)$ is a van der Waerden system
if and only if it has a dense set of $\AP$-recurrent points.
\end{enumerate}
\end{lem}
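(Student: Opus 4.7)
For \eqref{lem:G-delta-AP:1}, the plan is to mimic the $G_\delta$ argument from Lemma~\ref{lem:G-delta}\eqref{lem:G-delta:1}. Since the metric balls $B(x,1/k)$ form a neighborhood base at $x$, a point $x$ is $\AP$-recurrent if and only if for every $d,k\geq 1$ the set $N(x,B(x,1/k))$ contains an arithmetic progression of length $d+1$. Accordingly, for each pair $d,k\geq 1$ I would define
\[
A_{d,k}=\bigl\{y\in X : \exists\, a,n\geq 1 \text{ with } \rho(T^{a+in}y,y)<\tfrac{1}{k}\text{ for every } i=0,1,\ldots,d\bigr\}.
\]
For fixed $a,n$ the set $\{y:\rho(T^{a+in}y,y)<1/k\text{ for all }i\leq d\}$ is open by continuity of $T^j$ and of the metric, so $A_{d,k}$ is a union of open sets and hence open. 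The set of $\AP$-recurrent points is $\bigcap_{d,k\geq 1} A_{d,k}$, a $G_\delta$.

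For the forward implication in \eqref{lem:G-delta-AP:2}, the plan is to invoke Lemma~\ref{lem:vdW-AP}: if $(X,T)$ is a van der Waerden system, then the multi-recurrent points are dense, and every multi-recurrent point is $\AP$-recurrent (this is immediate from the definitions, since a common return time $n$ with $\{n,2n,\dotsc,dn\}\subset N(x,U)$ gives the arithmetic progression of length $d+1$ starting at $n$).

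For the reverse implication, suppose the $\AP$-recurrent points form a dense set. Given a non-empty open $U\subset X$ and $d\in\N$, pick an $\AP$-recurrent point $x\in U$. Because $U$ is a neighborhood of $x$, the set $N(x,U)$ is an AP-set and therefore contains some $(d+1)$-term progression $\{a,a+n,a+2n,\dotsc,a+dn\}$. Setting $y=T^a x$ yields $T^{in}y=T^{a+in}x\in U$ for $i=0,1,\ldots,d$, so
\[
y\in U\cap T^{-n}U\cap T^{-2n}U\cap \dotsb\cap T^{-dn}U,
\]
showing this intersection is non-empty and that $(X,T)$ is a van der Waerden system. No substantial obstacle is expected; the only point requiring a little care is making sure the neighborhood base $\{B(x,1/k)\}_k$ suffices in \eqref{lem:G-delta-AP:1}, which follows from the hereditary-upward nature of the condition ``$N(x,U)$ is an AP-set''.
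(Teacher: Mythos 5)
Your proposal is correct and follows essentially the same route as the paper: the paper's part (1) uses the open sets $Q_d=\{y: \exists\, n,a\geq 1 \text{ with } \rho(y,T^{in+a}y)<1/d \text{ for } i=0,\dots,d\}$ (a single index playing both roles of your $d$ and $k$), and its part (2) likewise combines Lemma~\ref{lem:vdW-AP} for the forward direction with the observation that $T^a x\in U\cap T^{-n}U\cap\dotsb\cap T^{-dn}U$ for the converse. No gaps.
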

\begin{proof}
\eqref{lem:G-delta-AP:1}: Given $d\geq 1$, let
\[Q_d=\bigl\{y\in X\colon \exists\, n,a\geq 1\text{ such that }\rho(y,T^{in+a}y)<\tfrac{1}{d} \text{ for }i=0,1,\ldots,d\}.\]
It is clear that every $Q_d$ is open, hence
$Q=\bigcap_{d=1}^\infty Q_d$ is a $G_\delta$ subset of $X$.
It is easy to see that $Q=\bigcap_{d=1}^\infty Q_d$ is the set of all $\AP$-recurrent points.

\eqref{lem:G-delta-AP:2}: First note that by Lemma~\ref{lem:vdW-AP} every van der Waerden system has dense set of multi-recurrent points, hence $\AP$-recurrent points are dense.

On the other hand, if $x$ is $\AP$-recurrent and $x\in U$ then for every $d\geq 1$
there are $a,n\geq 1$ such that $T^{a+in}x\in U$ for every $i=0,1,\ldots d$ and so
\[T^a x \in U\cap T^{-n}U\cap T^{-2n}U\cap \dotsb \cap T^{-dn}U\]
completing the proof.
\end{proof}

We have the following connection between $\AP$-recurrent points and their orbit closures.
\begin{prop} \label{prop:AP-rec-vdW}
Let $(X,T)$ be a dynamical system and $x\in X$.
Then $x$ is $\AP$-recurrent if and only if $(\overline{\Orb(x,T)},T)$ is a van der Waerden system.
\end{prop}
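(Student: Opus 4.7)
My plan is to work entirely inside $Y:=\overline{\Orb(x,T)}$, exploiting the fact that $x$ is a transitive point of $(Y,T)$ and that for any open $U\subset X$ containing $x$ one has $N(x,U)=N(x,U\cap Y)$. Thus both sides of the biconditional really concern the system $(Y,T)$ together with the distinguished transitive point $x$, and I may assume without loss of generality that $X=Y$.

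For the direction $(\Rightarrow)$, I would fix a non-empty open $V\subset Y$ and $d\ge 1$ and look for $n\ge 1$ with $V\cap T^{-n}V\cap\dotsb\cap T^{-dn}V\neq\emptyset$. First I would use transitivity of $x$ to pick $k\in\Zp$ with $T^k x\in V$, then invoke continuity of $T^k$ to pull $V$ back to an open neighborhood $U$ of $x$ satisfying $T^k U\subset V$. Applying $\AP$-recurrence of $x$ to $U$, I would extract an arithmetic progression $a,a+n,\ldots,a+dn$ inside $N(x,U)$ and observe that $z:=T^{k+a}x$ lies in $V\cap T^{-n}V\cap\dotsb\cap T^{-dn}V$, showing that $(Y,T)$ is a van der Waerden system.

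For the direction $(\Leftarrow)$, I would fix an open neighborhood $U$ of $x$ and $d\ge 1$ and look for an arithmetic progression of length $d+1$ inside $N(x,U)$. Applying the van der Waerden property with $V:=U\cap Y$ (and $d+1$ in place of $d$), I would obtain $n\ge 1$ together with $y\in V\cap T^{-n}V\cap\dotsb\cap T^{-(d+1)n}V$, then by continuity shrink to an open neighborhood $W$ of $y$ with $T^{in}W\subset V$ for $i=0,1,\ldots,d+1$. Density of $\Orb(x,T)$ in $Y$ would then yield $a\in\Zp$ with $T^a x\in W$, whence $T^{a+in}x\in U$ for each such $i$.

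The only genuine subtlety—which I expect to be the main (though minor) obstacle—is that the paper's convention requires an $\AP$-set to contain arithmetic progressions whose first term lies in $\N$, while the index $a$ produced by transitivity could a priori equal $0$. I plan to address this by having asked for length $d+2$ from the start: if $a\ge 1$, the progression $\{a,a+n,\ldots,a+dn\}$ of length $d+1$ lies in $N(x,U)$ and starts at a positive integer, while if $a=0$, the shifted progression $\{n,2n,\ldots,(d+1)n\}$ does the same job. Once this small bookkeeping step is settled, the equivalence follows, and as a by-product the argument recovers the already-noted fact that $\AP$-recurrent points can be lifted through factor maps, since both conditions pass naturally to subsystems.
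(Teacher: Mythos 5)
Your proposal is correct and follows essentially the same route as the paper: both directions rest on the same transfer mechanism (continuity to spread recurrence from a point of $Y=\overline{\Orb(x,T)}$ to a small neighborhood, then density of $\Orb(x,T)$ to pull it back to $x$), with the paper merely packaging the argument through Lemma~\ref{lem:G-delta-AP} (density of $\AP$-recurrent points) instead of arguing directly. Your extra bookkeeping for the $a=0$ case is sound and genuinely needed in your version; the paper sidesteps it by starting from an $\AP$-recurrent point $y\in U$, whose return-time progression already begins at a positive integer $k$.
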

\begin{proof}
If $x$ is $\AP$-recurrent, then every point in the orbit of $x$ is also $\AP$-recurrent.
By Lemma~\ref{lem:G-delta-AP}, $(\overline{\Orb(x,T)},T)$ is a van der Waerden system.

Now assume that $(\overline{\Orb(x,T)},T)$ is a van der Waerden system.
By Lemma~\ref{lem:G-delta-AP},
$(\overline{\Orb(x,T)},T)$ has a dense set of $\AP$-recurrent points.
Fix an open neighborhood $U$ of $x$. It suffices to show that $N(x,U)\in \AP$.
Choose an $\AP$-recurrent points $y$ in $U$.
For every $d\geq 1$, there exist $k,n\in\N$ such that
\[T^ky\in U, T^{k+n}y\in U, T^{k+2n}y\in U,\dotsc, T^{k+dn}y\in U.\]
By continuity of $T$, there exists an open neighborhood $V$ of $y$ such that for any $z\in V$ we have
\[T^kz\in U, T^{k+n}z\in U, T^{k+2n}z\in U,\dotsc, T^{k+dn}z\in U.\]
Since $y\in\overline{\Orb(x,T)}$, there exists $m\geq 0$ such that $T^{m}x\in V$.
Then
 \[T^{m+k} x\in U, T^{m+k+n}x\in U, T^{m+k+2n}x\in U,\dotsc, T^{m+k+dn}x\in U,\]
which implies that $N(x,U)$ is an AP-set.
This ends the proof.
\end{proof}

\begin{prop}
Let $(X,T)$ be a dynamical system and $x\in X$.
Then the following conditions are equivalent:
\begin{enumerate}
\item $x$ is an $\AP$-recurrent point in $(X,T)$;
\item $x$ is an $\AP$-recurrent point in $(X,T^n)$ for some $n\in\N$;
\item $x$ is an $\AP$-recurrent point in $(X,T^n)$ for any $n\in\N$.
\end{enumerate}
\end{prop}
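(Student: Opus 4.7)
The plan is to reduce everything to the non-trivial implication (1)$\Rightarrow$(3); the other directions are essentially formal. Indeed, (3)$\Rightarrow$(1) follows by specializing $n=1$, and (3)$\Rightarrow$(2) is immediate. For (2)$\Rightarrow$(1): given an open neighborhood $U$ of $x$ and $d\geq 1$, AP-recurrence of $x$ under $T^n$ supplies $a,m\in\N$ with $\{a+jm:j=0,\ldots,d\}\subset N_{T^n}(x,U)$, and then the scaled progression $\{na+jnm:j=0,\ldots,d\}$ lies in $N_T(x,U)$, showing $N_T(x,U)\in\AP$.

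For (1)$\Rightarrow$(3) I would proceed directly. Fix $n\in\N$, a neighborhood $U$ of $x$, and $d\geq 1$. Using continuity of $T^k$ at $x$ for $k=0,1,\ldots,n-1$, I first pick an open neighborhood $W$ of $x$ with $T^kW\subset U$ for every such $k$. By $\AP$-recurrence of $x$ under $T$ applied to $W$ with target length $nd+1$, there exist $a',m'\in\N$ such that $T^{a'+jm'}x\in W$ for $j=0,1,\ldots,nd$. Extracting every $n$-th term gives $T^{a'+inm'}x\in W$ for $i=0,1,\ldots,d$, a progression of length $d+1$ whose common difference $nm'$ is divisible by $n$.

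Next, let $k\in\{0,1,\ldots,n-1\}$ be the unique integer with $a'+k\equiv 0\pmod n$. Then for each $i=0,\ldots,d$,
\[
T^{a'+k+inm'}x=T^k\bigl(T^{a'+inm'}x\bigr)\in T^kW\subset U,
\]
so the AP $\{a'+k+inm':i=0,\ldots,d\}$ lies in $N_T(x,U)$ with both its first term and common difference divisible by $n$. Setting $\tilde a=(a'+k)/n\in\N$ (positive, since $a'+k$ is a positive multiple of $n$), we conclude $\{\tilde a+im':i=0,\ldots,d\}\subset N_{T^n}(x,U)$. Since $d$ and $U$ were arbitrary, $x$ is $\AP$-recurrent for $T^n$.

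The main obstacle is that the progression produced by hypothesis (1) sits at an essentially arbitrary residue modulo $n$, whereas we need its first term to belong to $n\N$ before scaling it down to an AP in $N_{T^n}(x,U)$. The continuity trick overcomes this: by shrinking the target neighborhood from $U$ to $W$ so that all of $W,TW,\ldots,T^{n-1}W$ remain inside $U$, we gain the freedom to shift the starting index forward by up to $n-1$ positions and land exactly in $n\N$, at the mild cost of demanding an $n$-fold longer AP inside $W$. An alternative, slicker route would be to invoke Proposition~\ref{prop:AP-rec-vdW} together with Proposition~\ref{prop:vdW-n}, but this would require comparing the orbit closures $\overline{\Orb(x,T)}$ and $\overline{\Orb(x,T^n)}$, so the direct argument above seems cleaner.
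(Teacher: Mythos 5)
Your treatment of the easy implications is fine, and the scaling argument for (2)$\Rightarrow$(1) is correct. The flaw is in the very first step of your proof of (1)$\Rightarrow$(3): you cannot in general ``pick an open neighborhood $W$ of $x$ with $T^kW\subset U$ for every $k=0,1,\dotsc,n-1$.'' Since $x\in W$, the inclusion $T^kW\subset U$ forces $T^kx\in U$; but $U$ is an arbitrarily small neighborhood of $x$, and there is no reason for $Tx,\dotsc,T^{n-1}x$ to lie in it (if $T^kx\neq x$ for some $0<k<n$, take $U$ of diameter smaller than $\rho(x,T^kx)$). Continuity of $T^k$ at $x$ only lets you map a small $W$ into a small neighborhood of $T^kx$, not of $x$. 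Consequently the key step $T^{a'+k+inm'}x\in T^kW\subset U$ collapses, and with it your passage from a progression with arbitrary first-term residue modulo $n$ to one whose first term is divisible by $n$. This residue obstruction is genuine and cannot be removed by purely combinatorial manipulation of the AP-set $N(x,W)$: the set of odd integers is an AP-set containing no multiple of $2$, so knowing only that $N(x,W)$ contains long progressions gives no progression supported on $n\N$.

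The paper's proof gets around exactly this point by a less elementary route: reduce to $X=\overline{\Orb(x,T)}$, so that by Proposition~\ref{prop:AP-rec-vdW} the system $(X,T)$ is a van der Waerden system; observe that $X_0=\overline{\Orb(x,T^n)}$ is regular closed in $X$; and then use the density of multi-recurrent points (Lemma~\ref{lem:vdW-AP}) together with the fact that multi-recurrence passes to powers (Lemma~\ref{lem:multi-rec-n}) --- multi-recurrence avoids the residue problem precisely because its progressions start at time $0$. This makes $(X_0,T^n)$ a van der Waerden system, and $x$, being a transitive point of it, is $\AP$-recurrent for $T^n$. Your closing remark dismisses this route because it requires comparing the two orbit closures; that comparison (the regular closedness of $X_0$ in $X$) is in fact the substantive dynamical input that your direct argument is missing, so you should either adopt it or supply a genuinely new idea for producing progressions in $N(x,U)\cap n\N$.
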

\begin{proof} The implications
(3)~$\Rightarrow$~(2)~$\Rightarrow$~(1) are clear. We only need to show (1)~$\Rightarrow$~(3).
Fix $n\in\N$. Without loss of generality, we can assume that $X=\overline{\Orb(x,T)}$. Then $(X,T)$ is topologically transitive system,
because $x$ is a recurrent point. Moreover, as $x$ is $\AP$-recurrent in $(X,T)$, applying Proposition~\ref{prop:AP-rec-vdW} we get that $(X,T)$ is a van der Waerden system. Denote $X_0=\overline{\Orb(x,T^n)}$.
It is well known (see~\cite[Lemma~6.5]{Li2012} for example) that
the interior of $X_0$ (with respect to the topology of $X$) is dense in $X_0$, that is, $X_0$ is regular closed subset of $X$.
By Lemma~\ref{lem:vdW-AP}, the collection of multi-recurrent points in $(X,T)$ is dense in $X$. By Lemma~\ref{lem:multi-rec-n}, every point multi-recurrent under action of $T$ is also multi-recurrent for $T^n$. Hence the set of multi-recurrent points
of $(X_0,T^n)$ is dense in $X_0$.
By Lemma~\ref{lem:vdW-AP} again, $(X_0,T^n)$ is a van der Waerden system.
By Proposition~\ref{prop:AP-rec-vdW} we obtain that every transitive point in $(X_0,T^n)$ is $\AP$-recurrent. So $x$ is also $\AP$-recurrent in $(X_0,T^n)$.
\end{proof}

In the proof of next result we will employ the technique developed in~\cite{Li2012} and show that
every $\AP$-recurrent point can be lifted through factor maps.

\begin{prop}\label{prop:AP-rec-lift}
Let $\pi\colon(X,T)\to(Y,S)$ be a factor map.
If $y\in Y$ is an $\AP$-recurrent point, then there exists an $\AP$-recurrent point $x\in X$
such that $\pi(x)=y$.
\end{prop}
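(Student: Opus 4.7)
The plan is to combine a Zorn's lemma extraction of a minimal subsystem with an enveloping semigroup lifting of idempotents, in the style of~\cite{Li2012}. First, let $\mathcal{F}$ denote the family of non-empty closed $T$-invariant subsets $Z\subseteq X$ with $Z\cap\pi^{-1}(y)\neq\emptyset$, partially ordered by inclusion. For any descending chain $\{Z_\alpha\}$ the fibres $Z_\alpha\cap\pi^{-1}(y)$ satisfy the finite intersection property, so their intersection is non-empty; Zorn's lemma thus produces a minimal element $Z_0\in\mathcal{F}$. Fix $x\in Z_0\cap\pi^{-1}(y)$; since $\overline{\Orb(x,T)}\in\mathcal{F}$, minimality forces $\overline{\Orb(x,T)}=Z_0$, so $x$ is a transitive point of $(Z_0,T)$ and $\pi(Z_0)=\overline{\Orb(y,S)}=:Y'$. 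By Proposition~\ref{prop:AP-rec-vdW}, $(Y',S)$ is already a van der Waerden system.

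Because $x$ has dense orbit in $Z_0$, showing that $x$ is $\AP$-recurrent is equivalent (again by Proposition~\ref{prop:AP-rec-vdW}) to $(Z_0,T)$ being a van der Waerden system, i.e.\ for every non-empty open $U\subseteq Z_0$ and every $d\geq 1$ the intersection $U\cap T^{-n}U\cap\dotsb\cap T^{-dn}U$ must be non-empty for some $n\geq 1$. Minimality of $Z_0$ immediately yields that the closed $T$-invariant set $K^{*}=\bigcap_{k\geq 0}T^{-k}(Z_0\setminus U)$ misses $\pi^{-1}(y)$, since otherwise $K^{*}\in\mathcal{F}$ would be a proper subset of $Z_0$ (as $x\in U\setminus K^{*}$). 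In the favorable case $\pi^{-1}(y)\cap Z_0\subseteq U$, upper semicontinuity of the fibre map produces an open $V\ni y$ in $Y'$ with $\pi^{-1}(V)\cap Z_0\subseteq U$; then $N(x,U)\supseteq N(x,\pi^{-1}(V))=N(y,V)$, and the $\AP$-recurrence of $y$ supplies an arithmetic progression of length $d+1$ in $N(x,U)$, which by transitivity of $x$ translates into the desired non-empty intersection.

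The main obstacle is the case $\pi^{-1}(y)\cap Z_0\not\subseteq U$, where no $\pi$-saturated neighbourhood squeezes inside $U$. Here I plan to follow the enveloping-semigroup technique of~\cite{Li2012}: form the compact subsemigroup $\Sigma_d\subseteq E(X,T)^{d+1}$ equal to the closure of $\{(T^a,T^{a+n},\dotsc,T^{a+dn}):a,n\geq 1\}$, together with the continuous semigroup homomorphism $\Theta$ onto the analogous $\Sigma_d^Y\subseteq E(Y,S)^{d+1}$. The $\AP$-recurrence of $y$ says that $(y,\dotsc,y)$ lies in its own $\Sigma_d^Y$-orbit, so the stabiliser $\{\tau\in\Sigma_d^Y:\tau(y,\dotsc,y)=(y,\dotsc,y)\}$ is a non-empty closed subsemigroup and by the Ellis--Numakura lemma contains an idempotent $\tau$; a second application of Ellis--Numakura to $\Theta^{-1}(\tau)\cap\Sigma_d$ lifts $\tau$ to an idempotent $\widetilde\tau=(\widetilde\tau_0,\dotsc,\widetilde\tau_d)\in\Sigma_d$. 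The technical heart, borrowed from~\cite{Li2012}, is to inductively align the commuting idempotents $\widetilde\tau_i$ on a common fixed point $x\in\pi^{-1}(y)$ using the natural partial order on idempotents in a compact right-topological semigroup, so that $\widetilde\tau_i x=x$ for every $i$; this produces a point which is $\AP$-$d$-recurrent, and carrying the construction simultaneously over all $d$ inside the appropriate infinite product semigroup (or applying a compactness/diagonal argument exploiting that the classes of $\AP$-$d$-recurrent points are nested and open) yields a single $x\in\pi^{-1}(y)$ that is $\AP$-recurrent, concluding the proof.
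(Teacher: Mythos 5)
Your reduction to a minimal element $Z_0$ of $\mathcal{F}$, the identity $Z_0=\overline{\Orb(x,T)}$, and the observation that $K^{*}=\bigcap_{k\ge 0}T^{-k}(Z_0\setminus U)$ must miss $\pi^{-1}(y)$ are all correct, but the plan for the ``main obstacle'' case has a genuine gap, and it sits exactly at the step you call the technical heart. First, the coordinates $\widetilde\tau_0,\dotsc,\widetilde\tau_d$ of the lifted idempotent need not commute: the generators $(T^{a},\dotsc,T^{a+dn})$ commute, but limits of commuting elements in an Ellis semigroup generally do not, so ``commuting idempotents'' is unwarranted. More seriously, idempotency of the tuple $\widetilde\tau$ only produces fixed points of $\widetilde\tau$ in $\pi^{-1}(y)^{d+1}$, namely tuples $\widetilde\tau(z_0,\dotsc,z_d)$, and these have no reason to be diagonal; the diagonal is not invariant under $\Sigma_d$, so there is no semigroup acting on it to which Ellis--Numakura or the order on idempotents could be applied. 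This is precisely the obstruction that prevents multi-recurrent points from lifting through proximal extensions (Proposition~\ref{prop:multi-rec-not-lift}), so no soft argument can close it. The semigroup proof that does work --- the one behind \cite[Proposition~4.5]{Li2012}, which is the paper's first proof --- lives in $\beta\N$: the Ramsey property and translation invariance of $\AP$ make the set of ultrafilters all of whose members are AP-sets a closed subsemigroup, one takes a \emph{single} idempotent $p$ there with $py=y$, and $x=pz$ for any $z\in\pi^{-1}(y)$ is automatically a fixed point of the single map $p$; your tuple formulation destroys exactly this feature. (The final ``all $d$ at once'' step is also shaky: AP-$d$-recurrence is a $G_\delta$ condition, neither open nor closed, so neither the openness claim nor a limit of points $x_d$ can be used as stated.)

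Ironically, your first two paragraphs already contain the ingredients of a complete elementary proof that makes the semigroup machinery unnecessary. From $K^{*}\cap\pi^{-1}(y)=\emptyset$ and compactness of $\pi^{-1}(y)\cap Z_0$ you get $\pi^{-1}(y)\cap Z_0\subset W:=\bigcup_{k=0}^{K}T^{-k}U$ for some finite $K$; your tube-lemma argument applied to $W$ instead of $U$ gives a neighborhood $V$ of $y$ with $N(y,V)\subset N(x,W)\subset\bigcup_{k=0}^{K}\bigl(N(x,U)-k\bigr)$, so $N(x,W)$ is an AP-set. Since $\AP$ has the Ramsey property (this is van der Waerden's theorem) and is translation invariant, some $N(x,U)-k$, and hence $N(x,U)$ itself, is an AP-set; as $U$ was an arbitrary non-empty open subset of $Z_0$ and $x$ is transitive in $Z_0$, Proposition~\ref{prop:AP-rec-vdW} shows $x$ is $\AP$-recurrent. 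This is close in spirit to the paper's second, elementary proof, which instead runs Zorn's lemma over the subsystems $A$ with $Y\subset\pi(A)$ and uses the $\AP^{*}$-characterization of the multi-non-wandering set (Theorem~\ref{thm:mwn-AP*}). As written, however, your proposal is incomplete: you must either carry out the covering-plus-Ramsey argument above or replace the Ellis-semigroup plan by the $\beta\N$ argument of \cite{Li2012}.
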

\begin{proof}
It is clear that for any $n\in\Z$ and any $F\in\AP$,
the translation of $F$ by $n$ denoted by $n+F=\{n+k\in\N\colon k\in F\}$, is also an AP-set.
In other words, the family $\AP$ is translation invariant (see \cite[page 263]{Li2012}).
Recall that the family $\AP$ has the Ramsey property.
Then by \cite[Lemma~3.4]{Li2012}, all the assumptions of Proposition~4.5 in~\cite{Li2012} are satisfied by $\AP$.
The result follows by application of \cite[Proposition 4.5]{Li2012} to the family $\AP$.
\end{proof}

\begin{rem}
The proof of Proposition~\ref{prop:AP-rec-lift} which is short and compact,
uses advanced machinery from \cite{Li2012}.
 Another, more elementary proof will be given later in Section~\ref{sec:6}.
\end{rem}

To characterize when a transitive system is a van der Waerden system,
we need the following definition. It is a special case of a notion considered in~\cite{Li2011}.

\begin{defn}
We say that $x\in X$ is an \emph{$\AP$-transitive point} if $N(x,U)$ is an AP-set for every non-empty open set $U\subset X$.
\end{defn}

\begin{prop} \label{prop:vdW-sys-transitive}
Let $(X,T)$ be a transitive system. Then the following conditions are equivalent:
\begin{enumerate}
  \item $(X,T)$ is a van der Waerden system;
  \item there exists an $\AP$-transitive point;
  \item every transitive point is an $\AP$-transitive point.
\end{enumerate}
\end{prop}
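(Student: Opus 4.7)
The plan is to close the cyclic chain $(3) \Rightarrow (2) \Rightarrow (1) \Rightarrow (3)$. Since $(X,T)$ is transitive, the set $\Tran(X,T)$ is non-empty (it is even a dense $G_\delta$), so the implication $(3) \Rightarrow (2)$ is immediate.

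For $(2) \Rightarrow (1)$, I would let $x$ be an $\AP$-transitive point and fix a non-empty open $U \subset X$ together with $d \in \N$. Since $N(x,U)$ is an $\AP$-set, there exist $a, n \in \N$ with $\{a, a+n, \ldots, a+dn\} \subset N(x,U)$, which reads $T^a x \in U \cap T^{-n}U \cap \cdots \cap T^{-dn}U$, so this intersection is non-empty; hence $(X,T)$ is a van der Waerden system.

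The implication $(1) \Rightarrow (3)$ is the main point. Fix an arbitrary transitive point $x \in X$ and a non-empty open $U \subset X$; the task is to prove $N(x,U) \in \AP$. For each $d \in \N$, the van der Waerden property supplies $n \in \N$ with $W := U \cap T^{-n}U \cap \cdots \cap T^{-dn}U \neq \emptyset$. The set $W$ is open by continuity of $T$, so density of $\Orb(x,T)$ produces some $m \in \N$ with $T^m x \in W$ (a dense orbit in a non-trivial space meets every non-empty open set infinitely often, so $m \geq 1$ can be chosen; the single-point case is vacuous). Unpacking the definition of $W$ yields $\{m, m+n, m+2n, \ldots, m+dn\} \subset N(x,U)$, an arithmetic progression of length $d+1$ inside $N(x,U)$. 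Since $d$ was arbitrary, $N(x,U) \in \AP$, and since $U$ was arbitrary, $x$ is $\AP$-transitive. No genuine obstacle arises; this implication is essentially a translation between the diagonal open-set formulation of the van der Waerden property and the return-time formulation, mediated by transitivity.
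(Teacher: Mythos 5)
Your proof is correct. The cycle $(3)\Rightarrow(2)\Rightarrow(1)\Rightarrow(3)$ closes, and the only delicate point — whether the time $m$ at which the orbit of $x$ enters $W=U\cap T^{-n}U\cap\dotsb\cap T^{-dn}U$ can be taken positive — is actually harmless even without your parenthetical remark: if $m=0$ you still get the length-$d$ progression $\{n,2n,\dotsc,dn\}\subset N(x,U)$, and $d$ is arbitrary. Where you differ from the paper is in organization rather than in the core mechanism. The paper proves $(2)\Rightarrow(1)$ and $(1)\Rightarrow(3)$ by routing everything through Proposition~\ref{prop:AP-rec-vdW} (a point is $\AP$-recurrent iff its orbit closure is a van der Waerden system): for $(1)\Rightarrow(3)$ it first concludes that a transitive point $x$ is $\AP$-recurrent, then upgrades $\AP$-recurrence to $\AP$-transitivity by a translation trick — choosing a neighborhood $V$ of $x$ and $k$ with $T^kV\subset U$, so that $k+N(x,V)\subset N(x,U)$ and translation-invariance of the family $\AP$ finishes the job. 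Your argument is self-contained: you apply the van der Waerden property directly to the target open set $U$ and let the dense orbit enter the resulting multiply-recurrent open set. This buys a shorter, more elementary proof that does not depend on the earlier proposition; what the paper's route buys is reuse of a characterization it needs anyway and a pattern (recurrence at $x$ plus forward translation) that generalizes to other Furstenberg families.
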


\begin{proof}
The implication (3) $\Rightarrow$ (2) is obvious and (2)~$\Rightarrow$~(1) follows from Proposition~\ref{prop:AP-rec-vdW}.
We only need to show that (1)~$\Rightarrow$~(3).

Let $x$ be a transitive point. It follows from Proposition~\ref{prop:AP-rec-vdW} that $x$ is an $\AP$-recurrent point.
Fix a non-empty open subset $U$ of $X$.
There exist a neighborhood $V$ of $x$ and $k\in\N$ such that $T^k V\subset U$.
Then $k+N(x,V)\subset N(x,U)$.
But $N(x,V)$ is an AP-set and so also $N(x,U)$ is an AP-set, which proves that  $x$ is an $\AP$-transitive point.
\end{proof}

\begin{prop} \label{prop:vdW-system-product}
Let $(X,T)$ be a transitive system.
If $(X,T)$ is a van der Waerden system, then $(X^n,T^{(n)})$ is also a van der Waerden system for every $n\in\N$,
where $T^{(n)}$ denotes $n$-times Cartesian product $T^{(n)}=T\times T\times\dotsb\times T$.
\end{prop}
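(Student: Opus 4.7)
The plan is to verify the van der Waerden property of $(X^n,T^{(n)})$ by finding, for a given non-empty open product set $U_1\times\cdots\times U_n\subset X^n$ and given $d\in\N$, a single common integer $m$ and a point $(y_1,\ldots,y_n)\in U_1\times\cdots\times U_n$ whose first $d$ iterates under $(T^{(n)})^m$ also lie in the product. Note that it suffices to handle basic product neighborhoods, since every non-empty open subset of $X^n$ contains one; observe also that we do not need (indeed cannot expect in general) $(X^n,T^{(n)})$ to be transitive, only to satisfy the topological multiple recurrence property.

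To build the common integer $m$, I would exploit the AP-transitive point supplied by Proposition~\ref{prop:vdW-sys-transitive}. Pick any transitive point $x$ of $(X,T)$; since $(X,T)$ is a transitive van der Waerden system, $x$ is $\AP$-transitive. For each $j=1,\ldots,n$ choose $k_j\in\N$ with $T^{k_j}x\in U_j$, and use continuity of $T^{k_1},\ldots,T^{k_n}$ to select a single open neighborhood $V$ of $x$ satisfying $T^{k_j}V\subset U_j$ for all $j$. Because $x$ is $\AP$-transitive, the return set $N(x,V)$ is an AP-set, hence contains an arithmetic progression of length $d+1$: there exist $a\in\N$ and $m\in\N$ with $T^{a+im}x\in V$ for every $i=0,1,\ldots,d$.

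Now define $y_j=T^{k_j+a}x$. Then $y_j\in T^{k_j}V\subset U_j$, and for every $i=0,1,\ldots,d$,
\[
T^{im}y_j=T^{k_j}\bigl(T^{a+im}x\bigr)\in T^{k_j}V\subset U_j.
\]
Consequently $(y_1,\ldots,y_n)$ belongs to
\[
(U_1\times\cdots\times U_n)\cap(T^{(n)})^{-m}(U_1\times\cdots\times U_n)\cap\cdots\cap (T^{(n)})^{-dm}(U_1\times\cdots\times U_n),
\]
which is therefore non-empty. Since $d$ and $U_1\times\cdots\times U_n$ were arbitrary, $(X^n,T^{(n)})$ is a van der Waerden system.

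There is no real obstacle here: the proof is a short syntactic use of Proposition~\ref{prop:vdW-sys-transitive}, and the only subtle point is the \emph{simultaneous} choice of a neighborhood $V$ of $x$ working for all coordinates; this is what allows a single arithmetic progression in $N(x,V)$ to produce a single $m$ that witnesses the multiple recurrence in all $n$ coordinates at once.
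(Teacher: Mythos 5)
Your proof is correct and follows essentially the same route as the paper: both arguments take a transitive (hence $\AP$-recurrent) point $x$, choose iterates $T^{k_j}x$ landing in the respective $U_j$, and use continuity to find a single neighborhood $V$ of $x$ whose AP-set of return times $N(x,V)$ drives the simultaneous recurrence of the tuple. The only cosmetic difference is that the paper packages the conclusion as density of $\AP$-recurrent points in $X^n$ plus Lemma~\ref{lem:G-delta-AP}, whereas you verify the multiple recurrence condition directly.
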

\begin{proof}
Let $U_1,U_2,\dotsc,U_n$ be non-empty open subsets in $X$.
Pick a transitive point $x\in U_1$.
Then there exist $k_1,k_2,\dotsc,k_{n-1}\in\N$ such that
$T^{k_1}x\in U_2, T^{k_2}x\in U_3,\dotsc, T^{k_{n-1}}x\in U_n$.
Since $(X,T)$ is a van der Waerden system, $x$ is $\AP$-recurrent.
This immediately implies that  $(x,T^{k_1}x,T^{k_2}x, \ldots, T^{k_{n-1}}x)$
is $\AP$-recurrent in $(X^n,T^{(n)})$, hence $(X^n,T^{(n)})$ has a dense set of $\AP$-recurrent points.
The proof is finished by application of Lemma \ref{lem:G-delta-AP}.
\end{proof}

The following example shows that Proposition~\ref{prop:vdW-system-product} is no longer true
if we do not assume that $(X,T)$ is transitive. As a byproduct, we obtain two
van der Waerden systems whose product is not a van der Waerden system.

\begin{exmp}
Let $n_1=2$ and define inductively $n_{k+1}=(n_{k})^3$. Put $A_k=[n_k, (n_k)^2]\cap \N$ and
denote $S=\bigcup_{k=1}^\infty A_{2k}$ and $R=\bigcup_{k=1}^\infty A_{2k+1}$. Clearly, $S\cap R=\emptyset$.
Denote by $X_S$ and $X_T$ the following subshifts  (so-called spacing shifts, see \cite{BNOST13}).
\begin{align*}
X_S&=\bigl\{x\in\{0,1\}^\N\colon x_i=x_j=1 \implies \vert i-j\vert\in S\cup\{0\}\bigr\}\\
X_R&=\bigl\{x\in\{0,2\}^\N\colon x_i=x_j=2 \implies \vert i-j\vert\in R\cup\{0\}\bigr\}.
\end{align*}

We can consider $X_S$ and $X_R$ as subshifts of $\{0,1,2\}^\N$.
Let $X=X_S\cup X_T \subset \{0,1,2\}^\N$. For a word $w$ over $\{0,1,2\}^\N$ we write $[w]_S=[w]\cap X_S$ and $[w]_R=[w]\cap X_R$.
First note that the product system $(X\times X,\sigma\times \sigma)$ is not a van der Waerden system.
This is because
\[
  N_{\sigma\times\sigma}(([1]\times [2])\cap X,([1]\times [2] )\cap X)= N_\sigma([1]_S,[1]_S)\cap N_\sigma([2]_R,[2]_R)=S\cap R=\emptyset.
\]
Now we show that $(X,\sigma)$ is a van der  Waerden system.
It it enough to prove that both $(X_S,\sigma)$ and $(X_R,\sigma)$ are van der  Waerden systems.
We will consider only the case of $(X_S,\sigma)$, since the proof for $(X_R,\sigma)$ is the same.

Fix a word $w\in \mathcal{L}(X_S)$,
take any positive integer $k$ such that $n_{2k}>2(d+|w|)$
and consider the following sequence $x= \bigl(w 0^{n_{2k}}\bigr)^{d+1} 0^\infty$.
We claim that $x\in X_S$.
Take any integers $i<j$ with $x_i=x_j=1$. If $j-i\leq |w|$, then $j-i \in S$ by the choice of $w$.
In the remaining case $j-i> |w|$ we have
\[
n_{2k}\leq j-i \leq (d+1)|w 0^{n_{2k}}|= (d+1)(|w|+n_{2k})\leq
\frac{n_{2k}}{2} \left(\frac{n_{2k}}{2}+n_{2k}\right)<(n_{2k})^2,
\]
therefore also in this case $j-i\in S$. Indeed, $x\in X_S$.
Put $m=|w 0^{n_{2k}}|$ and observe that $x, T^mx, T^{2m}x,\dotsc,T^{dm}x\in [w]_S$.
But for every nonempty open set $U\subset X_S$ we can find a word $w$ such that $[w]_S\subset U$ and then there is $m$ such that
\[x \in U\cap T^{-n}U\cap T^{-2n}U\cap \dotsb \cap T^{-dn}U.\]
This shows that $(X_S,\sigma)$ is a van der  Waerden system.
\end{exmp}

By Proposition \ref{prop:Delta-transitive} every $\Delta$-transitive system is  a van der Waerden system.
On the other hand, \cite[Proposition~3]{M10} provides an example of a strongly mixing system
which is not $\Delta$-transitive.
In fact, we will show that the example in~\cite[Proposition~3]{M10}   is not even a van der Waerden system.

\begin{prop}\label{prop:mixing-not-vdW}
There exists a strongly mixing system which is not a van der Waerden system.
\end{prop}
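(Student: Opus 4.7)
The plan is to produce an explicit subshift that is strongly mixing but fails the van der Waerden property for $d=2$, witnessed by the cylinder on the symbol $1$. Let $X\subset\{0,1\}^{\Zp}$ be the set of all sequences $x$ such that $\{i\in\Zp\colon x_i=1\}$ contains no three-term arithmetic progression, and consider the shift action $\sigma$ on $X$. I would first verify that $X$ is closed and $\sigma$-invariant: closedness holds because containing a three-term AP of $1$s is determined by finitely many coordinates, and shift invariance is immediate as shifting only translates the $1$-positions. Note that $X$ is non-trivial, since $10^\infty, 0^\infty\in X$ and, more generally, every sequence whose $1$-positions form any AP-free set belongs to $X$.

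Now set $U=X\cap[1]$, a non-empty open subset of $X$. For every $n\geq 1$, any $x\in U\cap \sigma^{-n}U\cap \sigma^{-2n}U$ would satisfy $x_0=x_n=x_{2n}=1$, producing a three-term AP $\{0,n,2n\}$ among the $1$-positions of $x$ and contradicting $x\in X$. Hence
\[
U\cap\sigma^{-n}U\cap\sigma^{-2n}U=\emptyset\quad\text{for every }n\geq 1,
\]
so $X$ is not a van der Waerden system.

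The main step is to verify that $(X,\sigma)$ is strongly mixing. Fix $u,v\in\mathcal{L}(X)$ and, for each $n\geq|u|$, consider the point $x^{(n)}=u\,0^{n-|u|}\,v\,0^\infty$. Clearly $x^{(n)}\in[u]$ and $\sigma^n x^{(n)}\in[v]$, so it suffices to show $x^{(n)}\in X$ for all sufficiently large $n$. The set of $1$-positions of $x^{(n)}$ equals $A\cup(n+B)$, where $A=\{i\colon u_i=1\}$ and $B=\{j\colon v_j=1\}$. Since $u,v\in\mathcal{L}(X)$, no three-term AP lies entirely in $A$ or entirely in $n+B$. For $n>\max A$ the two sets are disjoint and linearly separated, so any mixed three-term AP is of one of the forms $(a_1,a_2,n+b)$ with $a_1<a_2$ in $A$, $b\in B$, or $(a,n+b_1,n+b_2)$ with $a\in A$, $b_1<b_2$ in $B$. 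Each such configuration forces a single linear equation in $n$ (e.g.\ $n=2a_2-a_1-b$ or $n=2b_1-b_2+a$), producing at most $|A|^2|B|+|A||B|^2$ exceptional values. Thus $N([u]_X,[v]_X)$ is cofinite, which gives strong mixing.

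The construction is explicit and the failure of van der Waerden is essentially by definition; the only step with substance is the cofiniteness argument for $N([u]_X,[v]_X)$, which I expect to be the main (but still entirely elementary) part, reducing to the linear counting of mixed three-term APs sketched above. No deeper tool is needed, and I do not anticipate any serious obstacle.
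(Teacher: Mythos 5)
Your proposal is correct and follows essentially the same route as the paper: the paper's subshift, defined by forbidding $11$ and all words $1u1v1$ with $|u|=|v|$, is exactly the "no three-term arithmetic progression among the $1$-positions" shift, the cylinder $[1]_X$ witnesses the failure of the van der Waerden property for $d=2$ in both arguments, and strong mixing is obtained in both cases by splicing two legal words with a long block of zeros. The only (cosmetic) difference is in verifying $x^{(n)}\in X$: the paper exhibits an explicit threshold $n\geq|u|+|v|$ beyond which no mixed progression can fit, whereas you count the finitely many exceptional $n$ arising from linear equations --- both yield cofiniteness of $N([u]_X,[v]_X)$.
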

\begin{proof}
Let $\F$ be a collection of finite words over $\{0,1\}$ satisfying the following two conditions:
the word $11$ is in $\F$ and
if $u$ and $v$ are two finite words over $\{0,1\}$ such that $|u|=|v|$, then the word $1u1v1$ is in $\F$.
Let $X=X_\F$ be the subshift specified by taking $\F$ as the collection of forbidden words.
Note that $X$ is non-empty since $0^\infty,0^n10^\infty\in X$ for every $n\geq 0$.

Put $W=[1]_X$ and assume that there exists $n\in \N$ such that $W\cap \sigma^{-n}W\cap \sigma^{-2n}W\neq\emptyset$.
Then there exist two words $u$ and $v$ with length $n-1$ such that $1u1v10^\infty\in X$, which is a contradiction.
This shows that $(X,\sigma)$ is not a van der Waerden system.

Now we show that $(X,\sigma)$ is strongly mixing.
Let $u$ and $v$ be two  words in the language of $X$.
Put $N=|u|+|v|$. For every $n\geq N$, one has $u0^nv0^\infty\in X$.
This implies that $n\in N([u]_X,[v]_X)$ for every $n\geq N$, proving that $(X,\sigma)$ is strongly mixing.
\end{proof}

\begin{rem}
In fact, one can show that the only $\AP$-recurrent point of $(X,\sigma)$
in the Proposition \ref{prop:mixing-not-vdW} is the fixed point $0^\infty$.
\end{rem}

\begin{prop}\label{prop:lem_vdW_factor}
Let $\pi\colon (X,T)\to (Y,S)$ be a factor map.
\begin{enumerate}
  \item\label{prop:lem_vdW_factor:1} If $(X,T)$ is a van der Waerden system, then so is $(Y,S)$.
  \item\label{prop:lem_vdW_factor:2} If $(Y,S)$ is a van der Waerden system, then there exists a van der Waerden subsystem $(Z,T)$ of $(X,T)$
  such that $\pi(Z)=Y$.
  \item\label{prop:lem_vdW_factor:3} If $\pi$ is almost one to one, then $(X,T)$ is a van der Waerden system
  if and only if $(Y,S)$ is a van der Waerden system.
\end{enumerate}
\end{prop}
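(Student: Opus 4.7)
My plan is to treat the three parts in order, using throughout the characterization (Lemmas~\ref{lem:vdW-AP} and~\ref{lem:G-delta-AP}) that a system is van der Waerden iff its $\AP$-recurrent points are dense, together with the lifting of $\AP$-recurrent points through arbitrary factor maps (Proposition~\ref{prop:AP-rec-lift}).

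For~\eqref{prop:lem_vdW_factor:1} the argument is the usual pull-back: given non-empty open $V \subset Y$ and $d \geq 1$, apply the van der Waerden property of $(X,T)$ to the non-empty open set $\pi^{-1}(V)$ and project the resulting witness through $\pi$ using the intertwining relation $\pi \circ T = S \circ \pi$. This is purely formal.

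For~\eqref{prop:lem_vdW_factor:2} I would let $A \subset Y$ be the dense set of $\AP$-recurrent points of $(Y,S)$ provided by Lemma~\ref{lem:G-delta-AP}\eqref{lem:G-delta-AP:2} and set
\[
B = \{ x \in X : x \text{ is } \AP\text{-recurrent in } (X,T) \text{ and } \pi(x) \in A \},
\]
so that $\pi(B) = A$ by Proposition~\ref{prop:AP-rec-lift}. A brief check confirms that $\AP$-recurrence is preserved under $T$ (shifting an $\AP$-set by a constant remains in $\AP$), so $B$ is forward $T$-invariant and $Z = \overline{B}$ is a closed $T$-invariant subset of $X$. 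Then $\pi(Z)$ is closed and contains the dense set $A$, hence $\pi(Z) = Y$; and each $x \in B$ remains $\AP$-recurrent when viewed inside $Z$, since its orbit stays in $Z$ by invariance. Therefore $B \subseteq Z$ is a dense set of $\AP$-recurrent points of $(Z,T)$, and Lemma~\ref{lem:G-delta-AP}\eqref{lem:G-delta-AP:2} concludes that $(Z,T)$ is a van der Waerden system.

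For~\eqref{prop:lem_vdW_factor:3}, one direction is~\eqref{prop:lem_vdW_factor:1}; for the converse assume $(Y,S)$ is van der Waerden and let $G \subset X$ be residual with $\pi^{-1}(\pi(x)) = \{x\}$ for each $x \in G$. To show $(X,T)$ is van der Waerden I aim to produce $\AP$-recurrent points in an arbitrary non-empty open $V \subset X$. Density of $G$ yields some $x_0 \in G \cap V$. The key auxiliary open set is $W = Y \setminus \pi(X \setminus V)$, which is open by compactness of $X \setminus V$, is non-empty because $\pi(x_0) \in W$ (since $\pi^{-1}(\pi(x_0)) = \{x_0\} \subset V$), and crucially satisfies $\pi^{-1}(W) \subset V$. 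By density of $\AP$-recurrent points in $Y$ there is an $\AP$-recurrent $y_1 \in W$, and Proposition~\ref{prop:AP-rec-lift} lifts it to an $\AP$-recurrent $x_1 \in \pi^{-1}(y_1) \subset V$. Hence $\AP$-recurrent points of $(X,T)$ are dense, and Lemma~\ref{lem:G-delta-AP}\eqref{lem:G-delta-AP:2} finishes the proof.

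The main obstacle is conceptual rather than computational. The decisive idea, common to~\eqref{prop:lem_vdW_factor:2} and~\eqref{prop:lem_vdW_factor:3}, is to work with $\AP$-recurrent points rather than multi-recurrent points, precisely because the former lift through factor maps while the latter in general do not (Proposition~\ref{prop:multi-rec-not-lift}). The set $W$ used in~\eqref{prop:lem_vdW_factor:3} is the natural device for converting an ``open-in-$X$'' condition into an ``open-in-$Y$'' condition through an almost one-to-one fiber, and the forward $T$-invariance of $B$ in~\eqref{prop:lem_vdW_factor:2} is the only place where a routine verification is actually needed.
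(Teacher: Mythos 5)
Your proposal is correct and follows essentially the same route as the paper: each part is reduced, via Lemma~\ref{lem:G-delta-AP}, to exhibiting a dense set of $\AP$-recurrent points, with the lifting supplied by Proposition~\ref{prop:AP-rec-lift}. The only cosmetic difference is in part~(3), where you locate an $\AP$-recurrent point inside a given open $V\subset X$ via the saturated open set $W=Y\setminus\pi(X\setminus V)$, whereas the paper shows that the image of the singleton-fiber set is residual in $Y$ and pulls back its intersection with the residual set of $\AP$-recurrent points; both arguments rest on exactly the same two facts.
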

\begin{proof}
\eqref{prop:lem_vdW_factor:1}: It is a consequence of the definition of van der Waerden system.
\eqref{prop:lem_vdW_factor:2}:  By Lemma~\ref{lem:G-delta-AP} the set of $\AP$-recurrent point of $(Y,S)$, denoted by $Y_0$,
is a dense subset of $Y$. Then by Proposition~\ref{prop:AP-rec-lift},
for every $y\in Y_0$, there exists $x_y\in X$ such that $\pi(x_y)=y$ and $x_y$ is $\AP$-recurrent.
Let $X_0=\{x_y\colon y\in Y_0\}$ and $Z=\overline{\bigcup_{x\in X_0}\Orb(x,T)}$.
Clearly $\pi(Z)=Y$.
For every $x\in X_0$, any point in $\Orb(x,T)$ is $\AP$-recurrent.
So $Z$ has a dense set of $\AP$-recurrent points and so $(Z,T)$ is a van der Waerden system by Lemma~\ref{lem:G-delta-AP}.

\eqref{prop:lem_vdW_factor:3}: By \eqref{prop:lem_vdW_factor:1} we only need to prove that when $\pi$ is almost one-to-one and  $(Y,S)$ is a van der Waerden system then $(X,T)$
is also a van der Waerden system.

If we put $X_0=\{x\in X\colon \pi^{-1}(\pi(x))=\{x\}\}$, then by the definition of an almost one-to-one factor, $X_0$ is residual in $X$.
For every $x\in X_0$ and every neighborhood $U$ of $x$ there is a neighborhood $V$ of $\pi(x)$ such that $\pi^{-1}(V)\subset U$.
This implies that $\pi(X_0)$ is residual in $Y$.
By Lemma~\ref{lem:G-delta-AP}, the set of $\AP$-recurrent point of $(Y,S)$, denoted again by $Y_0$,
is a residual subset of $Y$.
Then $\pi(X_0)\cap Y_0$ is also residual in $Y$ and
$\pi^{-1}(\pi(X_0)\cap Y_0)$ is residual in $X$.
By Proposition~\ref{prop:AP-rec-lift}, every point in $\pi^{-1}(\pi(X_0)\cap Y_0)$ is $\AP$-recurrent.
Thus $(X,T)$ is a van der Waerden system by Lemma~\ref{lem:G-delta-AP}.
\end{proof}

\section{Multiple IP-recurrence property}
To get a dynamical characterization of C-sets,
the second author of this paper
introduced in~\cite{Li2012} a class of dynamical system satisfying the multiple IP-recurrence property.
In this section, we study this property and its relation to the van der Waerden systems.

\begin{defn}
We say that a dynamical system $(X,T)$ has the \emph{multiple IP-recurrence property}
if for every non-empty open subset $U$ of $X$, every $d\geq 1$ and
every IP-sets $FS\{p_i^{(1)}\}_{i=1}^\infty$,
$FS\{p_i^{(2)}\}_{i=1}^\infty ,\dotsc, FS\{p_i^{(d)}\}_{i=1}^\infty$ in $\N$,
there exists a finite subset $\alpha$ of $\N$
such that
\[U\cap T^{-\sum_{i\in\alpha}p_i^{(1)}}U\cap T^{-\sum_{i\in\alpha}p_i^{(2)}}U\cap\dotsb\cap T^{-\sum_{i\in\alpha}p_i^{(d)}}U\neq\emptyset.\]
\end{defn}

It is clear that if a dynamical system $(X,T)$ has the  multiple IP-recurrent property,
then it is a van der Waerden system.

By \cite[Theorem~A]{FK} we know that every E-system has the multiple IP-recurrent property.
It is shown in~\cite{GW93} that every E-system is either equicontinuous or sensitive.
We show that this dichotomy also holds for transitive systems with the multiple IP-recurrence property.
This is an extension of the main result in ~\cite{GW93} because there are transitive multiply
IP-recurrent systems which are not E-systems (see Remark~\ref{rem:5.6}).

\begin{thm}\label{thm:IPrecEqorSens}
If $(X,T)$ is a transitive system with the multiple IP-recurrence property,
then $(X,T)$ is either equicontinuous or sensitive.
\end{thm}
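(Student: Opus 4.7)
The plan is to invoke the Akin--Auslander--Berg / Glasner--Weiss dichotomy from \cite{AAB,GW93} recalled in Section~2: every transitive system is either almost equicontinuous or sensitive. Hence it suffices to show that a transitive, almost equicontinuous system with the multiple IP-recurrence property is actually equicontinuous. So assume that $(X,T)$ is transitive, has the multiple IP-recurrence property, and is not sensitive. Then $(X,T)$ is almost equicontinuous, so by Section~2 the set of equicontinuous points of $(X,T)$ coincides with the set $\Tran(X,T)$ of transitive points, and $(X,T)$ is uniformly rigid.

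The key step is to show that an arbitrary transitive point $x_0\in X$ is a minimal point, which forces $X=\overline{\Orb(x_0,T)}$ to be a minimal system. Applying the definition of the multiple IP-recurrence property with $d=1$ yields: for every non-empty open $U\subset X$ and every IP-set $\FS\{p_i\}_{i=1}^\infty\subset\N$ there is a finite $\alpha\subset\N$ with $\sum_{i\in\alpha}p_i\in N(U,U)$. Thus $N(U,U)$ meets every IP-set, i.e.\ it is an IP$^*$-set, and therefore syndetic, since every thick set is an IP-set (as remarked after the definition of IP$^*$ in Section~2).

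To transfer this recurrence of $U$ into itself to a return-time statement for the single point $x_0$, I would use equicontinuity of $x_0$. Given any open neighbourhood $U=B(x_0,\eta)$ of $x_0$, equicontinuity of $x_0$ yields $\delta>0$ with $\delta\le\eta/2$ such that $\rho(T^n x_0,T^n z)<\eta/2$ for all $z\in V:=B(x_0,\delta)$ and all $n\ge 0$. A direct triangle-inequality computation then shows $N(V,V)\subset N(x_0,U)$, so $N(x_0,U)$ is syndetic. As $U$ was an arbitrary neighbourhood of $x_0$, the characterisation of minimal points recalled in Section~2 implies that $x_0$ is minimal, whence $X=\overline{\Orb(x_0,T)}$ is a minimal system.

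It remains to promote minimality plus the existence of an equicontinuous point to equicontinuity of $(X,T)$. In the almost equicontinuous case the equicontinuous points are exactly the transitive points, and by minimality every point of $X$ is transitive, so every point of $X$ is equicontinuous. A standard compactness argument (covering $X$ by finitely many balls $B(x_i,\delta_{x_i}/2)$ associated with the local equicontinuity moduli and taking $\delta=\min_i\delta_{x_i}/2$) upgrades pointwise equicontinuity to equicontinuity of the whole system, completing the proof. The conceptually important observation is that only the $d=1$ case of multiple IP-recurrence is needed here; the main technical obstacle is merely the careful bookkeeping in the triangle-inequality step establishing $N(V,V)\subset N(x_0,U)$, which is forced by the equicontinuity modulus of $x_0$.
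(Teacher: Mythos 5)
Your proposal is correct and follows essentially the same route as the paper: reduce via the almost equicontinuous/sensitive dichotomy, use the $d=1$ case of the multiple IP-recurrence property to see that $N(V,V)$ is an IP$^*$-set and hence syndetic, transfer this to $N(x,U)$ for a transitive (equicontinuity) point $x$ by the same triangle-inequality argument, and conclude minimality and then equicontinuity. The only cosmetic difference is that the paper cites \cite{AY80} for ``minimal almost equicontinuous implies equicontinuous,'' whereas you rederive this from the coincidence of transitive and equicontinuity points plus compactness.
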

\begin{proof}
Every transitive system is either almost equicontinuous or sensitive (see \cite{AAB}),
so let us assume that  $(X,T)$ is almost equicontinuous.
It suffices to show that $(X,T)$ is minimal, since every minimal almost equicontinuous system is equicontinuous
(see \cite{AY80}). 

Pick a transitive point $x$ of $(X,T)$.
By \cite[Theorem 2.4]{AAB} the set of transitive points coincides with the set of equicontinuity points.
Then $x$ is also a equicontinuity point.
Fix any open neighborhood $U$ of $x$ and take $\eps>0$ such that the open $\eps$-ball around $x$ is contained in  $U$.
By equicontinuity of $x$ there is $\delta>0$ such that if $\rho(x,y)<\delta$ then $\rho(T^ix,T^iy)<\eps/2$
for every integer $i\geq 0$. Let $V$ denote the open $\delta$-ball around $x$.
Since $(X,T)$ has the multiple IP-recurrence property, for every IP-set $\FS\set{p_i}_{i=1}^\infty$ there exists a finite subset $\alpha$ of $\N$
such that $V\cap T^{-\sum_{i\in\alpha}p_i}V\neq\empty\emptyset$.
It follows that $N(V,V)$ is an IP$^*$-set. In particular, $N(V,V)$ is a syndetic set.
Next observe, that if $y\in V$,
then $\rho(x,y)<\delta$. Therefore if $y,T^ny\in V$, then $\rho(T^nx,T^ny)<\eps/2$ and $\rho(T^ny,x)<\eps/2$. It follows $T^nx\in U$ and
therefore $N(V,V)\subset N(x,U)$. So $N(x,U)$ is syndetic.
This implies that $x$ is a minimal point and hence $(X,T)$ is minimal.
\end{proof}

\begin{rem}
It is shown in~\cite{AAB} that there exists an almost equicontinuous system $(X,T)$ which is not equicontinuous.
By Proposition~\ref{rem:aeq_vdW} the system $(X,T)$ is a van der Waerden system.
But it can not have the multiple IP-recurrence property by Theorem~\ref{thm:IPrecEqorSens}.
\end{rem}

Next, we will modify the example constructed in Proposition~\ref{prop:mixing-not-vdW},
to obtain a strongly mixing van der Waerden system without the multiple IP-recurrence property.

\begin{prop}
There is a strongly mixing system which is a van der Waerden system
but does not have the multiple IP-recurrence property.
\end{prop}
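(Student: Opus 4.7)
The plan is to modify the construction from Proposition~\ref{prop:mixing-not-vdW} so that arbitrary arithmetic progressions of $1$'s are allowed in the subshift, except for those whose common difference lies in a prescribed IP-set. Fix a sequence $\{p_i\}_{i=1}^\infty$ of positive integers growing rapidly enough that the IP-set $P=\FS\{p_i\}_{i=1}^\infty$ has zero density (e.g.\ $p_i=2^{2^i}$). Let $\F$ be the collection of all words of the form $1u1v1$ with $|u|=|v|$ and $|u|+1\in P$, and set $X=X_\F\subset\{0,1\}^{\Zp}$; equivalently, a sequence $x$ belongs to $X$ if and only if it contains no three $1$'s at positions $i<i+s<i+2s$ with $s\in P$.

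Failure of the multiple IP-recurrence property is immediate for this $X$: take $U=[1]_X$, $d=2$, $p_i^{(1)}=p_i$ and $p_i^{(2)}=2p_i$. For every finite $\alpha\subset\N$, setting $s=\sum_{i\in\alpha}p_i\in P$, a point of $U\cap\sigma^{-s}U\cap\sigma^{-2s}U$ would carry $1$'s at positions $0$, $s$, $2s$, contradicting the definition of $X$. Strong mixing follows by the pasting argument of Proposition~\ref{prop:mixing-not-vdW}: for $u,v\in\mathcal{L}(X)$ and any $n>2(|u|+|v|)$, the sequence $u\, 0^{n-|u|}\, v\, 0^\infty$ lies in $X$, because a 3-AP of $1$'s straddling $u$ and $v$ would be forced to have its middle term in the central zero-gap, while any 3-AP contained in a single copy of $u$ or $v$ is already legal. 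Hence $N([u]_X,[v]_X)$ is cofinite.

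The main step is verifying that $(X,\sigma)$ is a van der Waerden system. Given $w\in\mathcal{L}(X)$ and $d\ge 1$, consider the candidate $y=(w\, 0^{n-|w|})^{d+1}\, 0^\infty$, which displays $w$ at positions $0,n,2n,\ldots,dn$. A short case analysis on the positions $m_r n+i_r$ of the $1$'s in $y$ (for $n>2|w|$) shows that any 3-AP of $1$'s in $y$ not confined to a single copy of $w$ has common difference of the form $m'n+\delta$ with $m'\in\{1,\ldots,\lfloor d/2\rfloor\}$ and $|\delta|\le|w|-1$. We therefore need $n$ to satisfy $m'n+\delta\notin P$ for every such admissible pair $(m',\delta)$. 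The set of bad $n$ is a finite union of rescaled translates of $P$, so it still has zero density, infinitely many admissible $n$ remain, and each one provides the required witness for $U=[w]_X$. Combined with Lemma~\ref{lem:G-delta-AP}, this yields the van der Waerden property.

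The main obstacle is reconciling two opposing demands on $P$: on one hand $P$ must itself be an IP-set whose generators witness failure of multiple IP-recurrence, and on the other hand $P$ must be sparse enough that, uniformly in $d$ and $w$, the set of $n$ excluded by the van der Waerden construction remains a proper subset of $\N$. The double-exponential growth of $\{p_i\}$ comfortably achieves both requirements, so the argument goes through.
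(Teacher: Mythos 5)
Your proposal is correct, and while it follows the same high-level strategy as the paper (modify the subshift of Proposition~\ref{prop:mixing-not-vdW} so that only an IP-indexed family of triple-$1$ patterns is forbidden, get mixing by zero-padding, and get the van der Waerden property from near-periodic points of the form $(w0^{n-|w|})^{d+1}0^\infty$), the concrete construction is genuinely different. The paper couples \emph{two} IP-sequences $\{p_i\}$ and $\{q_i\}$ with $q_n=2^np_{n+1}$ and forbids $1$'s at positions $j$, $j+\sum_{i\in\alpha}p_i$, $j+\sum_{i\in\alpha}q_i$ --- triples that are not arithmetic progressions --- so the emptiness of $[1]_X\cap\sigma^{-p_\alpha}[1]_X\cap\sigma^{-q_\alpha}[1]_X$ is built in combinatorially with no sparseness hypothesis, and the growth relation between the two sequences is what drives the case analysis for mixing; the van der Waerden verification is then only sketched there (``by a similar argument''). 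You instead use a single rapidly growing IP-set $P=\FS\{p_i\}$, forbid genuine $3$-term progressions with common difference in $P$, and witness the failure of multiple IP-recurrence with the pair $\FS\{p_i\}$, $\FS\{2p_i\}$. The payoff of your route is that the van der Waerden step becomes fully explicit and transparent: the spacings produced by $(w0^{n-|w|})^{d+1}0^\infty$ are exactly $m'n+\delta$ with $1\le m'\le\lfloor d/2\rfloor$ and $|\delta|\le|w|-1$ (your sub-case analysis correctly rules out straddling progressions with middle term inside a copy of $w$ or of $u,v$ once $n>2(|u|+|v|)$), and excluding $P$ from this family is a clean zero-density count; the price is that you must choose $P$ sparse, whereas the paper's coupled construction needs no density assumption. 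Both arguments are sound; yours supplies in detail precisely the step the paper leaves to the reader.
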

\begin{proof}
We are going to construct a subshift $X$ and two IP-sets $\FS\{p_i\}_{i=1}^\infty, \FS\{q_i\}_{i=1}^\infty$
such that for every finite $\alpha \subset \N$ we have
\[[1]_X\cap T^{-\sum_{i\in\alpha}p_i}[1]_X\cap T^{-\sum_{i\in\alpha}q_i}[1]_X=\emptyset.\]
Let us take any sequences $\{p_i\}_{i=1}^\infty$ and  $\{q_i\}_{i=1}^\infty$ satisfying:
\[
\sum_{j=1}^n p_j < p_{n+1}\text{ and } q_n=2^np_{n+1} \text{ for every }n\in\N.
\]

Let $\F$ be a collection of finite words over $\{0,1\}$ satisfying the following two conditions:
the words $11$ is in $\F$, and
if $u$ and $v$ are two finite words over $\{0,1\}$ such that
 $|u|=\sum_{i\in\alpha} p_i-1$ and $|u|+|v|= \sum_{i\in\alpha} q_i-2$ for some finite subset $\alpha$ of $\N$
then the word $1u1v1$ is in $\F$.
Let $X$ be the subshift specified by taking $\F$ as the collection of forbidden words.
Note that $X$ is nonempty since $0^\infty\in X$.

Let $w'$ and $w''$ be two words in the language of $X$.
Take any $s$ such that
\[
|w'|+|w''|+2
< p_{s+1}<q_s<q_{s+1}.
\]
It follows that if $\alpha\subset\N$ is a finite set such that
\[
\sum_{i\in \alpha}p_i< |w'|
\]
then $\max\alpha\le s$.
Let $N=q_{s+1}$.
For any $n\geq N$, let $x_n=w'0^nw''0^\infty$.
We will show that $x_n$ is a point in $X$ and hence $X$ is a mixing subshift. We need to show that no word from $\F$ may appear in $x_n$.
First note that the word $11$  does not appear in $x_n$, since the word $11$  appears neither in $w'$ nor in $w''$.
Suppose that for some non-empty words $u$ and $v$ over $\{0,1\}$ the word $1u1v1$ appears in $x_n$. If it is a subblock of $w'$ or $w''$, then it does not belong to $\F$. Now assume that $1u1v1$ appears in $x_n$, but neither in $w'$, nor in $w''$. Therefore either $1u1$ is a subword of $w'$ or
$1v1$ is a subword of $w''$. In the first case, if $\alpha\subset\N$ is a finite set such that
\[
\quad \sum_{i\in \alpha}p_i=|u|+1\le |w'|<p_{s+1},
\]
then $\max\alpha\le s$, hence
\[
\sum_{i\in \alpha}q_i\le \sum_{j=1}^sq_j<q_{s+1}.
\]
But on the other hand $|v|\geq n\geq q_{s+1}$ and therefore $|u|+|v|+2> \sum_{i\in\alpha} q_i$.
It implies that $1u1v1\notin\F$.

In the second case note that $|w''|\ge |v|+2$. Now, if $\alpha\subset\N$ is a finite set such that
\[
\sum_{i\in \alpha}p_i=|u|+1\ge n \ge q_{s+1},
\]
then $\max\alpha> s$, hence
\[
\sum_{i\in \alpha}q_i\ge q_{s+1}\ge
\sum_{i\in \alpha}p_i +p_{s+1}>|u|+1+|w'|+|w''|> |u|+|v|+2.
\]
It implies that $1u1v1\notin\F$.
Hence $x_n\in X$ and therefore $n\in N([u]_X,[v]_X)$ and $(X,\sigma)$ is strongly mixing.

By a similar argument,
one can show that $(X,\sigma)$ is a van der Waerden system.

Finally observe that if
\[[1]_X\cap T^{-\sum_{i\in\alpha}p_i}[1]_X\cap T^{-\sum_{i\in\alpha}q_i}[1]_X\neq\emptyset\]
then there are two finite words $u,v$ such that $1u1v1$ is in the language of $X$
and $|u|=\sum_{i\in\alpha}p_i-1$ and $|u|+|v|+1=\sum_{i\in\alpha}q_i-1$.
This contradicts the definition of $X$.
Thus $(X,\sigma)$ does not have multiple IP-recurrence property.
\end{proof}

In the rest of this section, we show that there is a large family of subshifts,
with the multiple IP-recurrence property.
For a function $f\colon\Zp\to[0,\infty)$,
we define
\[\Psi_f=\Bigl\{x\in\{0,1\}^\N:
\forall p\in \Zp,\forall i\in \N, \sum_{r=i}^{i+p-1}x_r\leq f(p)\Bigr\}\]
and call it the \emph{bounded density subshift} generated by $f$.
Bounded density shifts were introduced by Stanley in \cite{S13}.
Stanley proved also that to define $\Psi_f$ we can consider only \emph{canonical functions} $f\colon\Zp\to[0,\infty)$.
By~\cite[Theorem~2.9]{S13} a function $f\colon\Zp\to[0,\infty)$ is
\emph{canonical} for the bounded density shift $\Psi_f$ if and only if:
\begin{enumerate}
  \item $f(0)=0$;
  \item $f(m+1)\in f(m)+\Zp$ for any $m\in\Zp$;
  \item $f(m+n)\leq f(m) +f(n)$ for any $n,m\in\Zp$.
\end{enumerate}
Note that if  $f(1)=0$, then $\Psi_f=\{0^\infty\}$.

\begin{thm}\label{thm:IPdensityShift}
If $f$ is an unbounded canonical function 
then the bounded density subshift $(\Phi_f,\sigma)$ generated by $f$ has the multiple IP-recurrent property.
\end{thm}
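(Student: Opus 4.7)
The plan is to verify the multiple IP-recurrence property on a basis of cylinders $U=[w]_{\Psi_f}$, $w\in\mathcal{L}(\Psi_f)$. Write $L=|w|$ and $s=\sum_i w_i\le f(L)$; note that conditions (1)--(2) on $f$ force it to be non-decreasing, and unboundedness gives $f(p)\to\infty$. Using this, pick $N\ge L$ large enough that
\[
f\bigl((k-1)N-L+2\bigr)\ge k\,f(L)\quad\text{for every }k=2,\ldots,d+1;
\]
such $N$ exists since the right-hand sides form a finite set of integers and $f$ is eventually arbitrarily large. I will construct a single $x\in\Psi_f$ carrying $w$ at positions $0,n_1,\ldots,n_d$, where $n_j=\sum_{i\in\alpha}p_i^{(j)}$ for a suitable non-empty finite $\alpha\subset\N$.

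The key combinatorial step selects $\alpha$ so that the distinct elements of $P:=\{0,n_1,\ldots,n_d\}$ are pairwise at distance at least $N$. For each pair $1\le j<k\le d$ and each $i\in\N$, record the sign $\sigma_i^{(j,k)}:=\operatorname{sgn}\bigl(p_i^{(j)}-p_i^{(k)}\bigr)\in\{+,-,0\}$. This defines a coloring of $\N$ with at most $3^{\binom{d}{2}}$ colors, and by the infinite pigeonhole principle there is an infinite $I\subset\N$ on which all signs $\sigma_i^{(j,k)}$ are constant, equal to some $\sigma^{(j,k)}$. Choose any $\alpha\subset I$ with $|\alpha|=N$. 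Since $p_i^{(j)}\ge 1$ one has $n_j\ge N$; and for every pair $(j,k)$, either $\sigma^{(j,k)}=0$, in which case $n_j=n_k$, or $|p_i^{(j)}-p_i^{(k)}|\ge 1$ for all $i\in I$ and hence $|n_j-n_k|\ge|\alpha|=N$. Thus $|P|\le d+1$ and its elements are pairwise at distance $\ge N\ge L$.

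The construction is then concrete: set $x_r=w_{r-m}$ if $r\in[m,m+L-1]$ for some (necessarily unique) $m\in P$, and $x_r=0$ otherwise. Plainly $x\in U$ and $\sigma^{n_j}x\in U$ for every $j$, so it remains to verify $x\in\Psi_f$. Fix a window $[i,i+p-1]$ and let $k$ be the number of copies of $w$ it meets; the contribution from each copy is a subword of $w$ of some length $a_\ell\le L$ carrying at most $f(a_\ell)\le f(L)$ ones, so the total $1$-count in the window is at most $k\,f(L)$. If $k\le 1$, monotonicity of $f$ immediately gives this bound $\le f(p)$. If $k\ge 2$, the window must span the $k-1$ consecutive inter-copy gaps, each of length $\ge N-L$, together with the $k-2$ intermediate complete copies, forcing $p\ge(k-1)N-L+2$; the choice of $N$ then yields $f(p)\ge k\,f(L)$, as required.

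I expect the main obstacle to be the combinatorial step. A naive attempt to force strict separation $|n_j-n_k|\ge N$ fails whenever two generating sequences $\{p_i^{(j)}\}$ agree identically, since then $n_j=n_k$ for every choice of $\alpha$. The essential observation is that such coincidences are harmless: they merely merge two of the required copies of $w$ into one. Enforcing pairwise the trichotomy ``equal, or far apart'' is then handled in one stroke by a pigeonhole on the sign pattern of the coordinatewise differences, and the unboundedness of $f$ absorbs everything else.
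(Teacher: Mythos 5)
Your proof is correct and follows essentially the same route as the paper's: both place $d+1$ copies of $w$ at positions $0,n_1,\dots,n_d$ given by a single finite sum over $\alpha$, separated by gaps long enough that the unboundedness of $f$ absorbs the total $1$-count in any window meeting more than one copy. The only substantive difference is that where the paper normalizes the generators by an unelaborated ``without loss of generality'' (arranging $p_j^{(i)}<p_j^{(i+1)}$ so that $p_\alpha^{(i+1)}\ge p_\alpha^{(i)}+|\alpha|$), you obtain the same separation via the pigeonhole on sign patterns --- which also handles coinciding sums cleanly --- and your verification that $x\in\Psi_f$ spells out what the paper dismisses as ``easy to see.''
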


\begin{proof}
Fix a word $w$ in the language of $\Psi_f$ and let $U=[w]\cap \Psi_f$.
Take any $d\geq 1$ and any IP-sets $FS\{p_i^{(1)}\}_{i=1}^\infty$,
$FS\{p_i^{(2)}\}_{i=1}^\infty ,\dotsc, FS\{p_i^{(d)}\}_{i=1}^\infty$.
For simplicity of notation,  given a finite subset $\alpha$ of $\N$,
we define $p^{(i)}_\alpha=\sum_{j\in\alpha} p^{(i)}_j$.

Without loss of generality, we may assume that
for any $i\in\{1,\ldots,d\}$ and $j\in\N$ we have
\[p^{(i)}_j < p^{(i)}_{j+1}\text{and} p_j^{(i)}< p_j^{(i+1)} \;(\text{provided }i<d).
\]
Since $f$ is unbounded, there exists $p\in\N$ such that $f(p)>(d+1)|w|$ and $p\geq d|w|$.
There is $N\in\N$ such that if $\alpha\subset\N$
is a finite set with $\max \alpha\ge N$, then $\sum_{j\in\alpha} p^{(i)}_j>p+|w|$ for every $i\in\{1,\ldots,d\}$.
Note that for every $\alpha=\set{a_1,\ldots,a_s}\subset\N$ and any $1\le i <d$ we have
\[
p^{(i+1)}_\alpha\geq \sum_{j=1}^s p_{a_j}^{(i+1)}\geq \sum_{j=1}^s (p_{a_j}^{(i)}+1) \geq s+ p^{(i)}_\alpha.
\]
Denote $\beta=\set{N+1,\ldots,N+2p+1}$ and observe that  $p_{\beta}^{(i+1)}>p_{\beta}^{(i)}+2p$ for any $1\le i <d$
and $p_{\beta}^{(1)}>p+|w|$.
Let
\[
x=w 0^{p_\beta^{(1)}-|w|}w 0^{p_\beta^{(2)}-p_\beta^{(1)}-2|w|}w\ldots w 0^{p_\beta^{(d)}-p_\beta^{(d-1)}-d|w|}w 0^\infty.
\]
It is easy to see that $x\in\Psi_f$ and
\[
\sigma^{p_\beta^{(i)}}(x)\in [w] \text{ for }i=1,\ldots,d.
\]
Therefore
\[
U\cap \sigma^{-p_\beta^{(1)}}U\cap \sigma^{-p_\beta^{(2)}}U\cap\ldots\cap \sigma^{-p_\beta^{(d)}}U\neq\emptyset. \qedhere
\]
\end{proof}

\begin{rem}\label{rem:5.6}
By \cite[Theorem~2.14]{S13},
the bounded density shift $(\Phi_f,\sigma)$ in Theorem~\ref{thm:IPdensityShift}
is also strongly mixing.
If the function $f$ grows very slow, for example $f(n)=\log (n+1)$, then for any point $x\in \Phi_f$
one has
\[\lim_{n\to \infty}\frac{1}{n}\#(N(x,[1])\cap [1,n])\leq \lim_{n\to \infty}\frac{f(n)}{n}=0.\]
It follows that the only invariant measure of $(\Phi_f,\sigma)$ is the point mass on $\set{0^\infty}$.
But $\Psi_f$ is uncountable, hence $(\Psi_f,\sigma)$ is not an E-system.
Let $x$ be transitive point of $(\Phi_f,\sigma)$. By \cite[Theorems 8.5 and 4.4]{Li2012},
we know that $N(x,U)$ is a C-set for every  neighborhood $U$ of $x$.
Since  $(\Psi_f,\sigma)$ is not an E-system and $x$ is its transitive point,
there exists a neighborhood $V$ of $x$ such that $N(x,V)$ has the Banach density zero.
This gives a dynamical proof of a combinatorial result in~\cite{H09}
that there exists a C-set which has Banach density zero.
\end{rem}

\section{Multi-non-wandering points and van der Waerden center}\label{sec:6}

We say that a point $x\in X$ is  a \emph{non-wandering point}
if for every neighborhood $U$ of $x$  there exists an $n\in\N$ such that $U\cap T^{-n}U\neq\emptyset$.
Denote by $\Omega(X,T)$ the set of all non-wandering points of $(X,T)$.
It is easy to see that $\Omega(X,T)$ is a non-empty, closed and $T$-invariant.
So $(\Omega(X,T),T)$ also forms a dynamical system, so we
can consider non-wandering points of the subsystem $(\Omega(X,T),T)$.
To introduce the notion of Birkhoff center,
we define a (possibly transfinite) descending chain of non-empty closed and $T$-invariant subsets of $X$. We put inductively $\Omega_0(X,T)=X$, $\Omega_1(X,T)=\Omega(\Omega_0(X,T),T)$, and for every ordinal $\alpha$ we set $\Omega_{\alpha+1}(X,T)=\Omega(\Omega_\alpha(X,T),T)$.
We continue this process by a transfinite induction: if $\lambda$ is a limit ordinal we define
\[\Omega_\lambda(X,T)=\bigcap_{\alpha<\lambda}\Omega_\alpha(X,T).\]
In compact metric space decreasing family of closed sets is always at most countable, hence then there is a countable ordinal $\alpha$ such that
\[
X=\Omega_0(X,T) \supset \Omega_1(X,T) \supset \,\dotsb\, \supset \Omega_{\alpha}(X,T)=\Omega_{\alpha+1}(X,T)=\dotsb.
\]
We say that
$\Omega_\alpha(X,T)$ is the \emph{Birkhoff center} of
$(X,T)$ if $\Omega_{\alpha+1}(X,T)=\Omega_{\alpha}(X,T)$
and we define \emph{depth} of $(X,T)$ by
\[\mathrm{depth}(X,T)=\min\bigl\{ \alpha\colon \Omega_{\alpha}(X,T)=\Omega_{\alpha+1}(X,T) \bigr\}.\]
Note that compactness of $X$ implies that $\mathrm{depth}(X,T)<\omega_1$,
where $\omega_1$ is the first uncountable ordinal number.

Inspired by the notion of non-wandering points and the Birkhoff center,
we introduce multi-non-wandering points and the van der Waerden center.
\begin{defn}
Let $(X,T)$ be a dynamical system.
A point $x\in X$ is \emph{multi-non-wandering} if for every
open neighborhood $U$ of $x$ and every $d\in\N$ there exists an $n\in\N$ such that
\[U\cap T^{-n}U\cap T^{-2n}U\cap \dotsb \cap T^{-dn}U\neq\emptyset,\]
that is for every $d\in\N$, the diagonal $d$-tuple $(x,x,\dotsc,x)$ is non-wandering in
$(X^d,T\times T^2\times\dotsb\times T^d)$.
Denote by $\Omega^{(\infty)}(X,T)$ the collection of all multi-non-wandering points.
\end{defn}

First, we have the following characterization of multi-non-wandering points in a orbit closure of a point.
\begin{prop}\label{prop:multi-NW-AP}
Let $(X,T)$ be dynamical system and $x\in X$.
Suppose that $\overline{Orb(x,T)}=X$.
Then $y$ is a multi-non-wandering point if and only if $N(x,U)$ is an AP-set for
every neighborhood $U$ of $y$.
\end{prop}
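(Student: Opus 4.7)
The plan is to prove the two directions separately, using density of the orbit of $x$ together with continuity of iterates of $T$.

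For the forward direction, suppose $y$ is multi-non-wandering. Fix a neighborhood $U$ of $y$ and $d\in\N$. By the multi-non-wandering property, there exist $n\in\N$ and a point $z\in U$ with $T^{in}z\in U$ for $i=1,2,\dotsc,d$. Because the maps $T, T^2,\dotsc, T^{dn}$ are continuous, there is an open neighborhood $W\subset U$ of $z$ such that $T^{in}(W)\subset U$ for $i=1,\dotsc,d$. Since $\overline{\mathrm{Orb}(x,T)}=X$, we can pick $a\in\Zp$ with $T^a x\in W$, and then $\{a,a+n,a+2n,\dotsc,a+dn\}\subset N(x,U)$. As $d$ was arbitrary, $N(x,U)$ is an AP-set.

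For the converse, assume $N(x,U)\in\AP$ for every neighborhood $U$ of $y$, and fix such a $U$ together with $d\in\N$. By hypothesis, $N(x,U)$ contains an arithmetic progression $\{a,a+m,a+2m,\dotsc,a+dm\}$. Set $z=T^a x$; then $z\in U$ and $T^{im}z = T^{a+im}x\in U$ for $i=1,\dotsc,d$, so
\[
z\in U\cap T^{-m}U\cap T^{-2m}U\cap\dotsb\cap T^{-dm}U\neq\emptyset,
\]
showing that $y$ is multi-non-wandering.

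Both directions are essentially bookkeeping: density of the orbit of $x$ lets us realize the witness point $z$ produced by multiple recurrence as some $T^a x$, and conversely realizes the AP-structure of $N(x,U)$ as a multiple non-wandering event at $y$. There is no real obstacle here beyond choosing the open neighborhood $W$ small enough in the forward direction so that all of $W, T^n W,\dotsc, T^{dn} W$ lie inside $U$; this is possible because only finitely many continuous maps are involved.
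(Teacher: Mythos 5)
Your proof is correct and follows essentially the same route as the paper: in the forward direction the paper simply takes the open set $V=U\cap T^{-n}U\cap\dotsb\cap T^{-dn}U$ itself as your neighborhood $W$ and intersects the dense orbit with it, and the converse direction is identical. The only cosmetic point is that one should reduce to the case of an open neighborhood $U$ (so that the continuity/openness step applies), which is harmless since shrinking $U$ only shrinks $N(x,U)$.
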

\begin{proof}
First assume that $y$ is a multi-non-wandering point.
Fix a neighborhood $U$ of $y$.
For every $d\in\N$ there exists an $n\in\N$ such that the set
$V=U\cap T^{-n}U\cap T^{-2n}U\cap \dotsb \cap T^{-dn}U$
is non-empty and open. Since $\overline{\Orb(x,T)}=X$ there exists $m\geq 0$ such that $T^{m}x\in V\subset U$, and hence
\[T^{m+n}x\in U,\, T^{m+2n}x\in U,\,\ldots,\,T^{m+dn}x\in U,\]
that is $\{m+n,m+2n,\dotsc,m+dn\}\subset N(x,U)$. Thus $N(x,U)$ is an AP-set.

Fix a neighborhood $U$ of $y$ and assume that $N(x,U)$ is an AP-set.
There exist $m,n\in\N$ such that $\{m,m+n,m+2n,\dotsc,m+dn\}\in N(x,U)$.
Put $z=T^mx$. Then $z\in U\cap T^{-n}U\cap T^{-2n}U\cap \dotsb \cap T^{-dn}U$ and so $y$ is a multi-non-wandering point.
\end{proof}

The proof of following result is inspired by the set's forcing in~\cite{BF02} (consult \cite[Section 5]{Li2012} for more information on this topic).

\begin{thm} \label{thm:AP-multi-NW}
A set $F\subset\N$ is an AP-set if and only if
for every dynamical system $(X,T)$ and every $x\in X$, there is a multi-non-wandering point in $\overline{T^Fx}$,
where $T^Fx=\{T^nx\colon n\in F\}$.
\end{thm}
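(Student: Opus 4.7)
The plan is to prove both implications using Proposition~\ref{prop:multi-NW-AP}, which converts the multi-non-wandering condition on a point inside an orbit closure into an $\AP$-set condition on return times of the generating point.

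For the ($\Rightarrow$) direction, first I would pass to the subsystem $(\overline{\Orb(x,T)},T)$; this loses no generality because a multi-non-wandering point of a subsystem is automatically multi-non-wandering in the ambient system (intersecting any ambient neighbourhood with the subsystem only shrinks the witness set). Then I would argue by contradiction: if no $y\in\overline{T^Fx}$ is multi-non-wandering, Proposition~\ref{prop:multi-NW-AP} attaches to each $y\in\overline{T^Fx}$ an open neighbourhood $U_y$ with $N(x,U_y)\notin\AP$. Compactness of $\overline{T^Fx}$ yields a finite subcover $U_{y_1},\ldots,U_{y_m}$, and since $T^nx\in\overline{T^Fx}$ whenever $n\in F$, I get
\[F\subset\bigcup_{i=1}^m N(x,U_{y_i}).\]
The decisive step is to invoke the Ramsey property of $\AP$ (which the paper records as equivalent to the van der Waerden Theorem): iterated finitely many times, it forces $F\cap N(x,U_{y_{i_0}})\in\AP$ for some $i_0$, whence $N(x,U_{y_{i_0}})\in\AP$ by upward heredity, contradicting the choice of $U_{y_{i_0}}$.

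For the ($\Leftarrow$) direction, the plan is to test the universal hypothesis on a single subshift whose dynamics encode $F$. I would take $\chi_F\in\{0,1\}^{\Zp}$ given by $\chi_F(n)=1\iff n\in F$, set $X=\overline{\Orb(\chi_F,\sigma)}$, and exploit the key observation that $\sigma^n\chi_F\in[1]$ precisely when $n\in F$; since $[1]$ is clopen in $\{0,1\}^{\Zp}$, this gives $\overline{\sigma^F\chi_F}\subset[1]\cap X$. By hypothesis there is a multi-non-wandering point $y\in\overline{\sigma^F\chi_F}$ of $(X,\sigma)$. Because $\chi_F$ is by construction a transitive point of $(X,\sigma)$, Proposition~\ref{prop:multi-NW-AP} applies, and taking the neighbourhood $U=[1]\cap X$ of $y$ yields
\[F=N(\chi_F,[1]\cap X)\in\AP,\]
as required.

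The main obstacle is conceptual rather than computational: it lies in recognising that, once Proposition~\ref{prop:multi-NW-AP} is available, the ($\Rightarrow$) direction is exactly where the van der Waerden Theorem is used, packaged via compactness plus the Ramsey property of $\AP$. The ($\Leftarrow$) direction is a standard coding trick in which a single cylinder carries all information about $F$, and should present no real difficulty.
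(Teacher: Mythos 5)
Your proposal is correct, and the backward direction coincides with the paper's argument verbatim (characteristic function of $F$, orbit closure, the cylinder $[1]\cap X$, and Proposition~\ref{prop:multi-NW-AP}). The forward direction, however, takes a genuinely different route. The paper argues \emph{constructively}: it covers $K=\overline{T^Fx}$ by finitely many pieces of diameter $<1$, uses the Ramsey property of $\AP$ to select an AP-subset $F_1\subset F$ landing in one piece, and iterates at scales $1/2,1/3,\dotsc$ to produce nested compacta $K_i$ with $\diam(K_i)<1/i$ and nested AP-sets $F_i$ with $T^{F_i}x\subset K_i$; the unique point $y\in\bigcap_i K_i$ is then exhibited directly as multi-non-wandering, since every neighborhood of $y$ absorbs some $K_{i_0}$ and hence the AP-set $F_{i_0}\subset N(x,U)$. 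You instead run the contrapositive: assuming no point of $\overline{T^Fx}$ is multi-non-wandering, you extract via Proposition~\ref{prop:multi-NW-AP} a bad neighborhood $U_y$ for each $y$, take a finite subcover by compactness, write $F=\bigcup_i\bigl(F\cap N(x,U_{y_i})\bigr)$, and apply the Ramsey property once (iterated over finitely many cells) together with upward heredity to reach a contradiction. Both proofs rest on exactly the same two ingredients --- Proposition~\ref{prop:multi-NW-AP} and the Ramsey property of $\AP$ (i.e., van der Waerden) --- but yours is shorter and purely existential, while the paper's infinite refinement actually identifies the multi-non-wandering point as the intersection of an explicit nested sequence and in passing produces a decreasing chain of AP-subsets of $F$ realized as return times, which is slightly more information. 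Your justification for passing to the subsystem $(\overline{\Orb(x,T)},T)$ --- that multi-non-wandering in the subsystem implies multi-non-wandering in the ambient system --- is a point the paper glosses over with ``without loss of generality,'' and it is the correct direction of implication for the argument. No gaps.
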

\begin{proof}
Assume that $F$ is an AP-set.
Let $(X,T)$ be a dynamical system and $x\in X$.
Without loss of generality, assume that $\overline{\Orb(x,T)}=X$.
Set $K=\overline{T^Fx}$.
Cover $K$ with closed balls with diameter less than $1$ and let $r_1$ be the cardinality of a finite subcover of this cover.
Then we can present
\[K=\bigcup_{i=1}^{r_1} K_{1,i},\]
where each $K_{1,i}$ is compact and has diameter less than $1$.
Since the family $\AP$ of AP-sets has the Ramsey property, there is an AP-set $F_1\subset F$ and $i_1$
such that $T^{F_1}x\in K_{1,i_1}$.
Set $K_1=K_{1,i_1}$.
Cover $K_1$ with closed balls with diameter less than $1/2$ and let $r_2$ be the cardinality of some finite subcover of this cover.
Write
\[K_1=\bigcup_{i=1}^{r_2} K_{2,i},\]
where each $K_{2,i}$ is compact and has diameter less than $1/2$.
By induction we have a sequence of compact sets $\{K_i\}_{i=1}^\infty$
and a sequence of AP-sets $\{F_i\}_{i=1}^\infty$
such that $K_{i+1}\subset K_i$, $\diam(K_i)<1/i$, $F_{i+1}\subset F_i$ and $T^{F_i}x\subset K_i$.
By the compactness of $X$, there is $y\in X$ such that $\bigcap_{i=1}^\infty K_i=\{y\}$.
For every neighborhood $U$ of $y$, there exists $i_0$ such that $K_{i_0}\subset U$.
Then $F_{i_0}\subset N(x,U)$, hence $N(x,U)$ is an AP-set.
Thus $y$ is a multi-non-wandering point by Proposition~\ref{prop:multi-NW-AP}.

Now assume that for every dynamical system $(X,T)$ and
every $x\in X$ there is a multi-non-wandering point in $\overline{T^Fx}$.
Let $x$ be the characteristic function of  $F$.
We can view $x$ as a point in the full shift $(\{0,1\}^{\Z_+},\sigma)$.
Put $X=\overline{\Orb(x,\sigma)}$ and
note that $N(x,[1]\cap X)=F$.
By assumption, there exists a multi-non-wandering point $y \in \overline{T^Fx} \subset [1]\cap X$.
By Proposition~\ref{prop:multi-NW-AP}, $F=N(x,[1]\cap X)$ is an AP-set, since $[1]\cap X$ is a neighborhood of $y$.
\end{proof}

\begin{thm} \label{thm:mwn-AP*}
Let $(X,T)$ be a dynamical system and $x\in X$ be such that $\overline{\Orb(x,T)}=X$.
Then
\begin{enumerate}
  \item If $U$ is a neighborhood of $\Omega^{(\infty)}(X,T)$ and $y\in X$, then $N(y,U)$ is an AP$^*$-set.
  \item If $M$ is a non-empty closed subset $X$ satisfying (1), then $\Omega^{(\infty)}(X,T)\subset M$, that is $\Omega^{(\infty)}(X,T)$ is characterized as the smallest subset of $X$ satisfying (1).
\end{enumerate}
\end{thm}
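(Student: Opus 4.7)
The plan is to prove part (1) via Theorem~\ref{thm:AP-multi-NW} directly, and then prove part (2) by a separation argument combined with Proposition~\ref{prop:multi-NW-AP} and the duality between $\AP$ and $\AP^*$.

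For part (1), fix a neighborhood $U$ of $\Omega^{(\infty)}(X,T)$ and a point $y\in X$. To show that $N(y,U)$ is an AP$^*$-set, I would take an arbitrary AP-set $F$ and verify $N(y,U)\cap F\neq\emptyset$. Applying Theorem~\ref{thm:AP-multi-NW} to the system $(X,T)$ and the point $y$ (note the theorem allows \emph{any} point of $X$, not only the distinguished transitive one) produces a multi-non-wandering point $z$ in $\overline{T^F y}$. By definition $z\in\Omega^{(\infty)}(X,T)$, so $z$ lies in the interior of $U$, which is an open neighborhood of $z$; since $z$ is in the closure of $T^F y$, some $n\in F$ satisfies $T^n y\in U$, that is, $n\in N(y,U)\cap F$.

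For part (2), suppose toward contradiction that $M$ satisfies (1) but there exists $z\in\Omega^{(\infty)}(X,T)\setminus M$. Because $M$ is closed and $X$ is a metric space, I can choose disjoint open sets $V$ and $U$ with $z\in V$ and $M\subset U$; in particular $U$ is a neighborhood of $M$. On one hand, the hypothesis on $M$ applied to the transitive point $x$ and the neighborhood $U$ gives that $N(x,U)$ is an AP$^*$-set. On the other hand, $z$ is multi-non-wandering and $V$ is a neighborhood of $z$, so Proposition~\ref{prop:multi-NW-AP} (applied using $\overline{\Orb(x,T)}=X$) yields that $N(x,V)$ is an AP-set. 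But $U\cap V=\emptyset$ forces $N(x,U)\cap N(x,V)=\emptyset$, contradicting the dual relation $\AP^{**}=\AP$ which requires any AP$^*$-set to meet every AP-set. Hence $\Omega^{(\infty)}(X,T)\subset M$.

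The argument is largely formal once Theorem~\ref{thm:AP-multi-NW} and Proposition~\ref{prop:multi-NW-AP} are available; the only mild subtlety is making sure we separate $z$ from $M$ by \emph{disjoint} open sets (so that the AP/AP$^*$ collision is genuine) and that we apply Theorem~\ref{thm:AP-multi-NW} to the point $y$ in the statement of (1) rather than to $x$, which is what allows the conclusion to hold for every $y\in X$ and not just for the transitive point.
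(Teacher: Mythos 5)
Your proposal is correct and follows essentially the same route as the paper: part (1) rests on Theorem~\ref{thm:AP-multi-NW} (you argue directly with an arbitrary AP-set $F$, while the paper argues by contradiction with $F=N(z,U^c)$, which is the same reasoning in contrapositive form), and part (2) combines Proposition~\ref{prop:multi-NW-AP} with the fact that an AP$^*$-set must meet every AP-set (the paper concludes $z\in\overline{M}=M$ from the intersection of arbitrary neighborhoods, while you separate by disjoint open sets first; again the same argument). The only cosmetic quibble is that the relevant duality fact is simply the definition of an AP$^*$-set as a set meeting every AP-set, rather than the identity $\F^{**}=\F$.
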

\begin{proof}
We first show that (1) holds. Take a neighborhood $U$ of $\Omega^{(\infty)}(X,T)$.
If there exists $z\in X$ such that $N(z,U)$ is not an AP$^*$-set,
then $F=N(z,U^c)$ is an AP-set.
By Theorem~\ref{thm:AP-multi-NW}, there exists a multi-non-wandering point in $\overline{T^Fz}\subset U^c$.
This contradicts $\Omega^{(\infty)}(X,T)\subset U$.

Assume that $M\subset X$ is non-empty, closed and satisfies (1).
We show that $\Omega^{(\infty)}(X,T)\subset M$.
Fix a multi-non-wandering point $z$.
Let $V$ be a neighborhood of $z$. It follows from Proposition~\ref{prop:multi-NW-AP} that $N(x,V)$ is an AP-set.
But $N(x,U)$ is an AP$^*$-set for every neighborhood $U$ of $M$. Hence $N(x,V)\cap N(x,U)\neq\emptyset$.
We get that $U\cap V\neq\emptyset$ for every neighborhood $V$ of $z$ and every neighborhood $U$ of $M$.
Thus $z\in M$, since $M$ is closed.
\end{proof}

Using the characterization of the set of multi-non-wandering points (Theorem \ref{thm:mwn-AP*}),
we can give another proof of Proposition~\ref{prop:AP-rec-lift} without using the advanced results on ultrafilters.
\begin{proof}[Another proof of Proposition ~\ref{prop:AP-rec-lift}]
Without loss of generality, assume that $Y=\overline{\Orb(y,S)}$.
Let
\[\mathcal{A}=\{A\subset X\colon (A,T) \text{ is a subsystem of }(X,T) \text{ and }Y\subset \pi(A)\}. \]
It is clear that $\mathcal{A}$ is not empty since $X\in \mathcal{A}$.
By the Zorn Lemma, there is a minimal (under the inclusion) element $Z\in\mathcal{A}$.
Pick $x\in\pi^{-1}(y)\cap Z$.
Note that $\overline{\Orb(x,T)}\subset Z$ and $Y\subset \pi(\overline{\Orb(x,T)})$.
By the minimality of $Z$, we have $Z=\overline{\Orb(x,T)}$.
Fix a neighbourhood $U$ of $\Omega^{(\infty)}(Z,T)$ and a neighborhood $V$ of $y$.
By Theorem \ref{thm:mwn-AP*}, $N(z,U)$ is an AP$^*$-set.
But $N(x,V)$ is an AP-set. Then there exists $n\in\N$ such that $T^nz\in U$ and $T^ny\in V$.
Thus $y\in\pi(\Omega^{(\infty)}(Z,T))$.
By the minimality of $Z$ again, one has $Z=\Omega^{(\infty)}(Z,T)$.
Thus $Z$ is a van der Waerden system and $x$ is $\AP$-recurrent by Proposition~\ref{prop:multi-NW-AP} and Lemma~\ref{lem:G-delta-AP}.
\end{proof}

It is clear that $\Omega^{(\infty)}(X,T)$ is closed and $T$-invariant.
So $(\Omega^{(\infty)}(X,T),T)$  also forms a dynamical system.
We can consider multi-non-wandering points in $(\Omega^{(\infty)}(X,T),T)$.
It is shown in Example~\ref{exmp:Omega-infty-neq} that
$\Omega^{(\infty)}\bigl(\Omega^{(\infty)}(X,T),T\bigr)$ may not equal to $\Omega^{(\infty)}(X,T)$.
Similar to the Birkhoff center, we introduce the van der Waerden center.
We put $\Omega^{(\infty)}_0(X,T)=X$, $\Omega^{(\infty)}_1(X,T)=\Omega^{(\infty)}(\Omega^{(\infty)}_0(X,T),T)$ and $\Omega^{(\infty)}_2(X,T)=\Omega^{(\infty)}(\Omega^{(\infty)}_1(X,T),T)$.
We continue this process.
Then $X=\Omega^{(\infty)}_0(X,T) \supset \Omega^{(\infty)}_1(X,T) \supset \dotsb$, $\Omega^{(\infty)}_{\alpha+1}(X,T)=\Omega^{(\infty)}(\Omega^{(\infty)}_\alpha(X,T),T)$,
$\Omega^{(\infty)}_\lambda(X,T)=\bigcap_{\alpha<\lambda}\Omega^{(\infty)}_\alpha(X,T)$,
where $\lambda$ is a limit ordinal number.
We say that
$\Omega^{(\infty)}_\alpha(X,T)$ is the \emph{van der Waerden center} of
$(X,T)$ if $\Omega^{(\infty)}_{\alpha+1}(X,T)=\Omega^{(\infty)}_{\alpha}(X,T)$.

Note that a dynamical system is a van der Waerden system if and only if
every point is multi-non-wandering.
The following result shows that the van der Waerden center is just the the maximal van der Waerden subsystem.

\begin{prop}\label{prop:vdW-center}
Let $(X,T)$ be a dynamical system and $\Omega^{(\infty)}_\alpha(X,T)$ be the van der Waerden center of $(X,T)$.
Then $\Omega^{(\infty)}_\alpha(X,T)$ is the closure of the set of $\AP$-recurrent points of $(X,T)$.
Furthermore, $(\Omega^{(\infty)}_\alpha(X,T),T)$ is the maximal van der Waerden subsystem of $(X,T)$.
\end{prop}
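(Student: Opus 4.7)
Set $Z=\Omega^{(\infty)}_\alpha(X,T)$. The plan is first to observe that $(Z,T)$ is itself a van der Waerden system, then deduce the maximality claim via transfinite induction, and finally to identify $Z$ with the closure of the set of $\AP$-recurrent points of $(X,T)$. The underlying algebraic fact I will keep using is that a subsystem $(Y,T)$ of $(X,T)$ is a van der Waerden system if and only if $\Omega^{(\infty)}(Y,T)=Y$; one direction is immediate from the definition, and the converse follows by picking any point $y$ of a non-empty open $V\subseteq Y$ and applying the multi-non-wandering property of $y$ to the neighborhood $V$.

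The first step is trivial once the right formulation is in place: by the stabilization $\Omega^{(\infty)}_{\alpha+1}(X,T)=\Omega^{(\infty)}_\alpha(X,T)$, we have $\Omega^{(\infty)}(Z,T)=Z$, so $(Z,T)$ is a van der Waerden system. For the maximality claim, let $(Y,T)$ be any van der Waerden subsystem of $(X,T)$. I will show by transfinite induction that $Y\subseteq \Omega^{(\infty)}_\beta(X,T)$ for every ordinal $\beta\le\alpha$; the base case $\beta=0$ is trivial, and the limit case follows by taking intersections. For the successor step, assume $Y\subseteq \Omega^{(\infty)}_\beta(X,T)$, so $(Y,T)$ is a van der Waerden subsystem of $(\Omega^{(\infty)}_\beta(X,T),T)$. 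Given $y\in Y$, a neighborhood $U$ of $y$ in $\Omega^{(\infty)}_\beta(X,T)$ and $d\in\N$, the set $U\cap Y$ is a non-empty relatively open subset of $Y$, and the van der Waerden property of $(Y,T)$ yields some $n\in\N$ with $(U\cap Y)\cap T^{-n}(U\cap Y)\cap\dotsb\cap T^{-dn}(U\cap Y)\neq\emptyset$, which is a fortiori a witness in $U$. Hence $y$ is multi-non-wandering in $\Omega^{(\infty)}_\beta(X,T)$, giving $Y\subseteq \Omega^{(\infty)}_{\beta+1}(X,T)$ and completing the induction.

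For the description of $Z$ as the closure of the $\AP$-recurrent points of $(X,T)$, note that since $(Z,T)$ is a van der Waerden system, Lemma~\ref{lem:G-delta-AP}\eqref{lem:G-delta-AP:2} supplies a dense set of $\AP$-recurrent points of $(Z,T)$; an $\AP$-recurrent point of $(Z,T)$ is also $\AP$-recurrent in $(X,T)$, because any neighborhood of $x$ in $X$ intersected with $Z$ is a neighborhood of $x$ in $Z$ with the same return-time set. Therefore $Z$ is contained in the closure of the $\AP$-recurrent points of $(X,T)$. Conversely, if $x\in X$ is $\AP$-recurrent, then by Proposition~\ref{prop:AP-rec-vdW} the subsystem $(\overline{\Orb(x,T)},T)$ is a van der Waerden system, and by the maximality already proved we get $\overline{\Orb(x,T)}\subseteq Z$, hence $x\in Z$. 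Closedness of $Z$ finishes the argument. The only mildly delicate point is the successor step in the induction, which requires carefully distinguishing neighborhoods in $Y$, in $\Omega^{(\infty)}_\beta(X,T)$, and in $X$; once this is done the rest is essentially bookkeeping.
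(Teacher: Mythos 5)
Your proof is correct and follows essentially the same route as the paper: the stabilization $\Omega^{(\infty)}_{\alpha+1}=\Omega^{(\infty)}_{\alpha}$ makes the center a van der Waerden system with a dense set of $\AP$-recurrent points, while a transfinite induction gives the reverse inclusion. The only difference is that you make explicit (via maximality of arbitrary van der Waerden subsystems) the step the paper dismisses as ``it is not hard to see that $Z\subset \Omega^{(\infty)}_{\gamma}(X,T)$ for every ordinal $\gamma$.''
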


\begin{proof}
Let $Z$ be the set of $\AP$-recurrent points of $(X,T)$.
It is not hard to see that $Z\subset \Omega^{(\infty)}_{\gamma}(X,T)$ for every ordinal number $\gamma$.
So $\overline{Z}\subset \Omega^{(\infty)}_{\alpha}(X,T)$.

Since $\Omega^{(\infty)}_{\alpha+1}(X,T)=\Omega^{(\infty)}_{\alpha}(X,T)$, every point in the dynamical system
$(\Omega^{(\infty)}_\alpha(X,T),T)$ is multi-non-wandering, and then
$(\Omega_\alpha^{(\infty)}(X,T),T)$ is a van der Waerden system.
By Lemma~\ref{lem:vdW-AP}, the set of $\AP$-recurrent
points of $(\Omega^{(\infty)}_\alpha(X,T),T)$ is dense in $\Omega^{(\infty)}_{\alpha}(X,T)$.
Then $\Omega^{(\infty)}_{\alpha}(X,T)\subset \overline{Z}$.
\end{proof}

\begin{prop}
Let $\pi\colon (X,T)\to (Y,S)$ be a factor map.
Then the image of van der Waerden center of $(X,T)$ under $\pi$ coincides with the van der Waerden center of $(Y,S)$.
\end{prop}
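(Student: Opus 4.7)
The plan is to denote the van der Waerden centers by $C(X)=\Omega^{(\infty)}_\alpha(X,T)$ and $C(Y)=\Omega^{(\infty)}_\beta(Y,S)$ and exploit the characterization from Proposition~\ref{prop:vdW-center}: each center is the \emph{maximal} van der Waerden subsystem of the ambient system. Once this is in hand, the equality $\pi(C(X))=C(Y)$ reduces to two applications of Proposition~\ref{prop:lem_vdW_factor}, one per inclusion.

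For the inclusion $\pi(C(X))\subseteq C(Y)$, I would argue as follows. The set $\pi(C(X))$ is compact (as the continuous image of a compact set) and $S$-invariant because $\pi\circ T=S\circ\pi$ and $C(X)$ is $T$-invariant, so $(\pi(C(X)),S)$ is a subsystem of $(Y,S)$. Since $(C(X),T)$ is a van der Waerden system, Proposition~\ref{prop:lem_vdW_factor}\eqref{prop:lem_vdW_factor:1} applied to the restriction $\pi|_{C(X)}\colon(C(X),T)\to(\pi(C(X)),S)$ shows that $(\pi(C(X)),S)$ is a van der Waerden system. By the maximality part of Proposition~\ref{prop:vdW-center} this forces $\pi(C(X))\subseteq C(Y)$.

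For the reverse inclusion $C(Y)\subseteq\pi(C(X))$, note that $\pi^{-1}(C(Y))$ is closed and $T$-invariant, and that $\pi$ restricts to a factor map
\[\pi|_{\pi^{-1}(C(Y))}\colon (\pi^{-1}(C(Y)),T)\longrightarrow(C(Y),S).\]
Since $(C(Y),S)$ is a van der Waerden system, Proposition~\ref{prop:lem_vdW_factor}\eqref{prop:lem_vdW_factor:2} applied to this restricted factor map provides a van der Waerden subsystem $(Z,T)$ of $(\pi^{-1}(C(Y)),T)$ with $\pi(Z)=C(Y)$. Then $Z$ is also a van der Waerden subsystem of the ambient system $(X,T)$, so by the maximality part of Proposition~\ref{prop:vdW-center} we obtain $Z\subseteq C(X)$. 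Therefore $C(Y)=\pi(Z)\subseteq\pi(C(X))$, finishing the argument.

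There is essentially no real obstacle in this proof: all the heavy lifting has already been done in Propositions~\ref{prop:vdW-center} and~\ref{prop:lem_vdW_factor}. The only point requiring some care is the \emph{lifting} step for the second inclusion, where it would be a mistake to try to push $C(Y)$ back through $\pi$ directly; instead, one must first pass to the subsystem $\pi^{-1}(C(Y))\subseteq X$ and apply Proposition~\ref{prop:lem_vdW_factor}\eqref{prop:lem_vdW_factor:2} there, exactly because we need a van der Waerden subsystem of $X$ (not merely a preimage set) in order to invoke the maximality of $C(X)$.
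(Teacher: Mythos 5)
Your proof is correct, but it is packaged differently from the paper's. The paper's argument is a one-liner at this point: writing $X_0$ and $Y_0$ for the sets of $\AP$-recurrent points of $(X,T)$ and $(Y,S)$, it invokes Proposition~\ref{prop:AP-rec-lift} (together with the trivial fact that images of $\AP$-recurrent points are $\AP$-recurrent) to get $\pi(X_0)=Y_0$, and then uses the \emph{first} half of Proposition~\ref{prop:vdW-center} --- the center is the closure of the $\AP$-recurrent points --- plus $\pi(\overline{X_0})=\overline{\pi(X_0)}$ (compactness) to conclude. You instead use the \emph{second} half of Proposition~\ref{prop:vdW-center} (maximality of the center among van der Waerden subsystems) and route both inclusions through Proposition~\ref{prop:lem_vdW_factor}: part (1) for $\pi(C(X))\subseteq C(Y)$, and part (2), applied to the restricted factor map onto $C(Y)$, for the reverse inclusion. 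The two routes are ultimately equivalent in content, since part (2) of Proposition~\ref{prop:lem_vdW_factor} is itself proved by lifting $\AP$-recurrent points via Proposition~\ref{prop:AP-rec-lift}; what your version buys is a ``soft'' argument that never mentions $\AP$-recurrent points explicitly and avoids the closure-commutes-with-$\pi$ step, at the cost of being longer and of leaning on the reading of ``maximal'' in Proposition~\ref{prop:vdW-center} as ``contains every van der Waerden subsystem'' --- which is indeed what the paper's proof of that proposition establishes (any van der Waerden subsystem has a dense set of $\AP$-recurrent points by Lemma~\ref{lem:G-delta-AP}, and these are $\AP$-recurrent in the ambient system), so there is no gap. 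Your closing remark about first passing to $\pi^{-1}(C(Y))$ before applying part (2) is exactly the right point of care for your route.
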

\begin{proof}
Let $X_0$ and $Y_0$ be the set of all $\AP$-recurrent points in $(X,T)$ and $(Y,T)$ respectively.
By Proposition~\ref{prop:AP-rec-lift}, we have $\pi(X_0)=Y_0$.
Then the result follows from Proposition~\ref{prop:vdW-center}.
\end{proof}

\begin{exmp} \label{exmp:Omega-infty-neq}
There exists a dynamical system $(X,T)$ such that $\Omega^{(\infty)}\bigl(\Omega^{(\infty)}(X,T),T\bigr)\neq\Omega^{(\infty)}(X,T)$.

Take any increasing sequence $\set{z_n}_{n\in \Z}\subset (0,1)$ such that
$\lim_{n\to -\infty}z_n=0$ and $\lim_{n\to \infty}z_n=1$.
Let $X= \set{0,1}\cup \set{z_n : n\in \Z} \pmod 1$, that is, we view $z_n$ as a sequence on the unit circle.
Then we have $\lim_{n\to \infty} \rho(z_{-n},z_n)=0$, where $\rho$ is the standard metric on the unit circle.

Define
\[
Y=X\times \set{0}\cup \bigcup_{n=1}^\infty \bigcup_{j=2^n}^{2^{n+1}} \bigcup_{i=-n}^n \set{(z_i, 4^{-n}-j 2^{-n-1}4^{-n-1})}
\cup \bigcup_{n=0}^\infty{(z_{-n},2)}\cup (0,2).
\]
Clearly, if $j\neq s$ then $4^{-n}-j 2^{-n-1}4^{-n-1}\neq 4^{-n}-s 2^{-n-1}4^{-n-1}$ and $4^{-n}-4^{-n-1}>4^{-n-1}$.
Therefore the coordinates like $(z_i, 4^{-n}-j 2^{-n-1}4^{-n-1})$ uniquely determine a point in $Y$.
The set $Y$ is  a closed subset of a product space
$X\times [0,4]$. Therefore $Y$ with the maximum metric is compact.

Let $g(z_n)=z_{n+1}$ for every $n\in \Z$ and $g(0)=0\in X$.
For any integer $j\in [2^n,2^{n+1}]$ denote $a_j=(z_{-n}, 4^{-n}-j 2^{-n-1}4^{-n-1})$ and $b_j=(z_{n}, 4^{-n}-j 2^{-n-1}4^{-n-1})$.
Then we define a function $f\colon Y \to Y$ by putting
\[
f(x,y)=\begin{cases}
(g(x),y) & y=0 \text{ or } (y=2 \text{ and } x\neq z_0),\\
a_1 & y=2 \text{ and } x=z_0,\\
(g(x),y) & y\in (0,2) \text{ and } (x,y)\neq b_j \text{ for every }j,\\
a_{j+1} & (x,y)= b_j.
\end{cases}
\]
Clearly $f$ is a bijection and it is also not hard to verify that it is a homeomorphism.
Observe that $\Omega(f)=\set{(0,2)}\cup X\times \set{0}$. We are going to show that $\Omega^{(\infty)}(f)=\Omega(f)$.
Clearly both fixed points are in $\Omega^{(\infty)}(f)$. Now let us take any $m\in \Z$ and any open set $U\ni (z_m,0)$.
There is $N>0$ such that $(z_m,y)\in U$ for every $y\leq 4^{-N}$. Fix any $d>0$ and take $n> \max \{d,N, |m|\}$.
Now if we take any $j=2^n,\ldots, 2^{n}+d < 2^{n+1}-1$ then
\[
p_j=(z_m, 4^{-n}-j 2^{-n-1}4^{-n-1})\in Y \cap U.
\]
By the definition of $f$, for $j=0,\ldots, d-1$ we have $f^{2n+1}(p_j)=p_{j+1}$. In other words
\[
p_d\in U \cap f^{-2n-1}(U)\cap \dotsb \cap f^{-(2n+1)d}(U)\neq \emptyset.
\]
Indeed $(z_m,0)\in \Omega^{(\infty)}(f)$.
But
\[
\Omega^{(\infty)}(f|_{\Omega^{(\infty)}(f)})=\Omega(f|_{\Omega^{(\infty)}(f)})=\set{(0,0),(0,2)}.
\]
It follows that the van der Waerden center can be a proper subset of $\Omega^{(\infty)}(f)$.
\end{exmp}

\begin{rem}
It is shown in~\cite{N78} that if $\alpha$ is a countable ordinal, then
there exists a dynamical system $(X,T)$ with $\mathrm{depth}(X,T)=\alpha$.
We define the \emph{van der Waerden depth} of $(X,T)$ as
\[\mathrm{depth}^{(\infty)}(X,T)=\min\bigl\{ \alpha\colon \Omega^{(\infty)}_{\alpha+1}(X,T)=\Omega^{(\infty)}_{\alpha}(X,T) \bigr\}.\]
We conjecture that the van der Waerden depth is a countable ordinal and for every countable ordinal number $\alpha$
there exists a dynamical system $(X,T)$ such that $\mathrm{depth}^{(\infty)}(X,T)=\alpha$.
\footnote{Li and Zhang \cite{LiJ2015} gave a positive answer to this conjecture. }
\end{rem}

\section*{Acknowledgement}
A substantial part of this paper was written when the authors
attended the Activity ``Dynamics and Numbers", June--July 2014, held at
the Max Planck Institute  f\"ur Mathematik (MPIM) in Bonn, Germany.
Some part of the work was continued when the authors attended a conference held at
the Wuhan Institute of Physics and Mathematics,
China, and the satellite conference of 2014 ICM at
Chungnam National University, South Korea. We are grateful to the organizers for their hospitality.

D.~Kwietniak was supported by the National Science Centre (NCN)
under grant no. DEC-2012/07/E/ST1/00185.
J.~Li was supported in part by NSF of China (11401362, 11471125, 11326135)
and Shantou University  Scientific Research Foundation for Talents (NTF12021),
P. Oprocha was supported in part by Project of LQ1602 IT4Innovations Excellence in Science,
X.~Ye was supported by NSF of China (11371339,11431012).


\end{document}